\documentclass[10pt,reqno]{amsart}
\usepackage{amsmath}
\usepackage{amssymb,latexsym}
\usepackage{amscd}
\usepackage{verbatim}
\usepackage[english]{babel}
\usepackage[pdftex,bookmarks=true]{hyperref}
\usepackage[usenames,dvipsnames]{color}

\evensidemargin 0cm
\oddsidemargin 0cm
\textwidth 15.5cm




\newtheorem{theorem}{Theorem}[section]

\newtheorem{lemma}[theorem]{Lemma}
\newtheorem{proposition}[theorem]{Proposition}

\newtheorem{thmx}{Theorem}

\newtheorem{definition}[theorem]{Definition}
\newtheorem{remark}[theorem]{Remark}

\numberwithin{equation}{section}




\newcommand{\be}{\begin{equation}}
\newcommand{\bel}[1]{\begin{equation}\label{#1}}
\newcommand{\ee}{\end{equation}}

\newcommand{\barr}{\begin{eqnarray}}
\newcommand{\earr}{\end{eqnarray}}
\newcommand{\bars}{\begin{eqnarray*}}
\newcommand{\ears}{\end{eqnarray*}}


\newtheorem{subn}{\name}

\newcommand{\bsn}[1]{\def\name{#1}\begin{subn}}
\newcommand{\esn}{\end{subn}}

\newtheorem{sub}{\name}[section]


\newcommand{\bs}{\begin{sub}}
\newcommand{\es}{\end{sub}}
\newcommand{\bsl}[1]{\begin{sub}\label{#1}}

\newcommand{\bth}[1]{\def\name{Theorem}
\begin{sub}\label{t:#1}}
\newcommand{\blemma}[1]{\def\name{Lemma}
\begin{sub}\label{l:#1}}
\newcommand{\bcor}[1]{\def\name{Corollary}
\begin{sub}\label{c:#1}}
\newcommand{\bdef}[1]{\def\name{Definition}
\begin{sub}\label{d:#1}}
\newcommand{\bprop}[1]{\def\name{Proposition}
\begin{sub}\label{p:#1}}

\newcommand{\R}{\eqref}

\newcommand{\BA}{\begin{array}}
\newcommand{\EA}{\end{array}}
\newcommand{\BAN}{\renewcommand{\arraystretch}{1.2}
\setlength{\arraycolsep}{2pt}\begin{array}}
\newcommand{\BAV}[2]{\renewcommand{\arraystretch}{#1}
\setlength{\arraycolsep}{#2}\begin{array}}
\newcommand{\BSA}{\begin{subarray}}
\newcommand{\ESA}{\end{subarray}}


\newcommand{\BAL}{\begin{aligned}}
\newcommand{\EAL}{\end{aligned}}
\newcommand{\BALG}{\begin{alignat}}
\newcommand{\EALG}{\end{alignat}}
\newcommand{\BALGN}{\begin{alignat*}}
\newcommand{\EALGN}{\end{alignat*}}
\def\angb<#1>{\langle #1 \rangle}





\def\ga{\alpha}

\def\CS{{\mathcal S}}

\def\CT{{\mathcal T}}



\def\Z{\mathbb{Z}}

\def\R{\mathbb{R}}

\let\ol=\overline

\let\.=\cdot
\let\0=\emptyset

\def\O{\Omega}
\def\M{\mathcal{M}}

\def\R{\mathbb{R}}

\def\Z{\mathbb{Z}}

\def\XX{\mathcal{X}}

\def\bs{\backslash}
\def\ol{\overline}

\def\stst{\subset\subset}

\def\al{\alpha}
\def\be{\beta}
\def\ep{\epsilon}

\def\la{\lambda}

\def\si{\sigma}

\def\Om{\Omega}
\def\de{\delta}
\def\De{\Delta}
\def\ga{\gamma}









	


\begin{document}

\title[Nonlocal dispersal equations in time-periodic media]{Nonlocal dispersal equations in time-periodic media: principal spectral theory, bifurcation and asymptotic behaviors}
\author{Hoang-Hung Vo}
\address{Department of Mathematics and Computer Science, Ho Chi Minh City University of Science, Vietnam National University, 227 Nguyen Van Cu, District 5, Ho Chi Minh city, Vietnam}
\email{vhhung@hcmus.edu.vn}
\thanks{H.-H. Vo is supported by the Vietnam National Foundation of Scientific and Technology Development (NAFOSTED)-Project 101.02-2016.26.}

\author{Zhongwei Shen$^{*}$}
\address{Department of Mathematical and Statistical Sciences, University of Alberta, 632 Central Academic Building,
Edmonton, AB T6G 2G1, Canada}
\email{zhongwei@ualberta.ca}
\thanks{$^*$ Corresponding author}


\begin{abstract} 
This paper is devoted to the investigation of the following nonlocal dispersal equation
$$
u_{t}(t,x)=\frac{D}{\sigma^m}\left[\int_{\Om}J_\sigma(x-y)u(t,y)dy-u(t,x)\right]+f(t,x,u(t,x)), \quad t>0,\quad x\in\ol{\Om},
$$
where $\O\subset\R^{N}$ is a bounded and connected domain with smooth boundary, $m\in[0,2)$, $D>0$ is the dispersal rate, $\si>0$ characterizes the dispersal range, $J_{\si}=\frac{1}{\si^{N}} J\left(\frac{\cdot}{\sigma}\right)$ is the scaled dispersal kernel, and $f$ is a time-periodic nonlinear function of generalized KPP type. We first study the principal spectral theory of the linear operator associated to the linearization of the equation at $u\equiv0$. We obtain an easily verifiable and general condition for the existence of the principal eigenvalue as well as important sup-inf characterizations for the principal eigenvalue. We next study the influence of the principal eigenvalue on the global dynamics and confirm the criticality of the principal eigenvalue being zero. It is then followed by the study of the effects of the dispersal rate $D$ and the dispersal range characterized by $\si$ on the principal eigenvalue and the positive time-periodic solution, and prove various asymptotic behaviors of the principal eigenvalue and the positive time-periodic solution when $D,\si\to0^{+}$ or $\infty$. Finally, we establish the  maximum principle for time-periodic nonlocal operator.


\end{abstract}

\subjclass[2010]{Primary 35B50, 47G20; secondary 35J60}



\keywords{Nonlocal dispersal equation, principal eigenvalue, generalized principal eigenvalue, bifurcation, positive solution, global dynamics, maximum principle}

\maketitle

\tableofcontents


\section{Introduction and main results}

The present paper is devoted to the investigation of the following nonlocal dispersal equation (or, integro-differential equation) in spatio-temporal heterogeneous environments
\begin{equation}\label{main-eqn}
u_{t}(t,x)=\frac{D}{\sigma^m}\left[\int_{\Om}J_\sigma(x-y)u(t,y)dy-u(t,x)\right]+f(t,x,u(t,x)),\quad t>0,\quad x\in\ol{\Om},
\end{equation}
where $\O\subset\R^{N}$ is a bounded and connected domain with smooth boundary, $m\in[0,2)$, $D>0$, $\si>0$ and $J_{\si}(x)=\si^{-N} J\left(x/\sigma\right)$ for $x\in\R^{N}$. The operator 
$$
u\mapsto\frac{D}{\sigma^m}\left[\int_{\Om}J_\sigma(\cdot-y)u(y)dy-u\right]
$$
is called the nonlocal dispersal operator. The dispersal kernel $J$ and the nonlinearity  $f(t,x,s)$ satisfy the following assumptions:

\begin{itemize}
\item[\rm\bf(H1)] $J\in C(\R^{N})$ is nonnegative, symmetric and supported in $B_\gamma(0)$ for some $\gamma>0$, and satisfies $J(0)>0$ and $\int_{\R^{N}}J(x)dx=1$, where $B_\gamma(0)\subset\R^{N}$ is the open ball centered at $0$ with radius $\ga$.

\item[\rm\bf(H2)] $f:\R\times\ol{\Om}\times\R\mapsto\R$ is of KPP type and satisfies the following conditions.

\begin{enumerate}
\item $f(\cdot,x,s)\in C(\R)$, $f(t,\cdot,s)\in C^1(\ol{\Om})$ and $f(t,x,\cdot)\in C^1(\R)$.

\item $f(t,x,0)=0$ for all $(t,x)\in\R\times\ol{\Om}$ and there is $T>0$ such that 
$$
f(t+T,x,s)=f(t,x,s),\quad\forall(t,x,s)\in\R\times\ol{\Om}\times\R.
$$

\item For all $(t,x)\in\R\times\ol{\Om}$, the function $s\mapsto\frac{f(t,x,s)}{s}$ is decreasing on $(0,\infty)$. 

\item There exists $S\in C(\R\times\ol{\Om})\cap L^{\infty}(\R\times\ol{\Om})$ such that 
$$
f(t,x,S(t,x))\leq0,\quad\forall(t,x)\in\R\times\ol{\Om}.
$$
\end{enumerate}
\end{itemize}

The equation \eqref{main-eqn} is often used to model the evolution of species that exhibit long range internal interactions and are subject to seasonal effects and spatial variations (see e.g. \cite{BCV1,BCV2,JL1,JL2,HMP01,HMMV03,PMJ, ShXi15-2}). In this context, whether the species can survive or not, and the eventual distributions of the species if survive are fundamental issues. In terms of the equation \eqref{main-eqn}, these issues are closely related to the global dynamics of the solutions of \eqref{main-eqn} and corresponding effects of the \textit{dispersal rate} and the \textit{dispersal range} characterized $D$ and $\si$, respectively, on the solutions. The number $m$ is referred to as the \textit{cost parameter} (see e.g. \cite{BCV1,BCV2,HMP01,HMMV03,ShXi15-2}).

As it is known from \cite{BCV1,BCV2,RS12, ShXi15-2} and references therein, the principal spectral theory of the linear operator associated to the equation \eqref{main-eqn} linearized at zero, namely, the nonlocal parabolic-type operator 
$$
v\mapsto-v_{t}(t,x)+\frac{D}{\si^{m}}\left[\int_{\Om}J_{\si}(x-y)v(t,y)dy-v(t,x)\right]+f_{s}(t,x,0)v(t,x),
$$
plays an essential role in investigating the equation \eqref{main-eqn}. To study the principal spectral theory of the above operator, it is natural to consider the operator with general continuous $T$-periodic coefficient $a(t,x)$  as follows
\begin{equation}\label{main-eqn-linear}
L_{\Om}[v](t,x)=-v_{t}(t,x)+D\left[\int_{\Om}J(x-y)v(t,y)dy-v(t,x)\right]+a(t,x)v(t,x),\quad (t,x)\in\R\times\ol{\Om},
\end{equation}
where $a\in C(\R\times\ol{\O})$ satisfies
$$
a(t+T,x)=a(t,x),\quad (t,x)\in\R\times\ol{\Om}. 
$$
We define the following spaces $\XX_{\Om}$, $\XX_{\Om}^{+}$ and $\XX_{\Om}^{++}$ :
\begin{equation}\label{T-periodic-spaces}
\begin{split}
\XX_{\Om}&=\big\{v\in C^{1,0}(\R\times{\ol\Om})\big|v(t+T,x)=v(t,x),\,\,(t,x)\in\R\times{\ol\Om}\big\},\\
\XX_{\Om}^{+}&=\big\{v\in\XX_{\Om}\big|v(t,x)\geq0,\,\,(t,x)\in\R\times{\ol\Om}\big\},\quad\text{and}\\
\XX_{\Om}^{++}&=\big\{v\in\XX_{\Om}\big|v(t,x)>0,\,\,(t,x)\in\R\times{\ol\Om}\big\},
\end{split}
\end{equation}
where $C^{1,0}(\R\times\ol{\O})$ denotes the class of functions that are $C^1$ in $t$ and continuous in $x$. Set
\begin{equation*}
 a_{T}(x):=\frac{1}{T}\int_{0}^{T}a(t,x)dt,\quad x\in\ol{\Om}.
\end{equation*}

The principal spectral theory for nonlocal elliptic-type operators and their properties have been extensively investigated in \cite{Cov10,Coville1,Coville2,BCV1,ShXi15-1} and references therein. In particular, Coville et al. proved in \cite{Coville1,Coville2, Cov10} a sharp sufficient condition for the existence of the principal eigenvalue using the generalized principal spectral theory developed in \cite{BNV94}, while Shen and Xie proved in \cite{ShXi15-1} a necessary and sufficient spectral condition for the existence of the principal eigenvalue using a dynamical system approach. 

The principal spectral theory of nonlocal parabolic-type operators like $L_{\Om}$ is later studied by Rawal and Shen \cite{RS12}. Due to the non-compactness of nonlocal operators and their resolvents, principal eigenvalues do not exist in general. The notion \textit{principal spectrum point} (see Definition \ref{defn-principal}), in replace of principal eigenvalue, was used in \cite{RS12}. Moreover, they proved a necessary and sufficient spectral condition for the principal spectrum point becoming the principal eigenvalue. More precisely, they proved the following theorem.

\begin{theorem}[\cite{RS12}]\label{prop-principal-e}
Suppose {\bf(H1)} and  {\bf(H2)}. $\la_{1}(-L_{\Om})$ is the principal eigenvalue of $-L_{\Om}$ if and only if 
\begin{equation*}\label{lambda-*}
\la_{1}(-L_{\Om})<\la_{*}:=\min_{x\in\ol{\Om}}\left[D-a_{T}(x)\right],
\end{equation*}
where $\lambda_1(-L_\O)$ is the principal spectrum point of $-L_\O$ (see Definition \ref{defn-principal}). Moreover, when $\la_{1}(-L_{\Om})$ is the principal eigenvalue of $-L_{\Om}$, it is geometrically simple and has an eigenfunction in $\XX_{\Om}^{++}$. 
\end{theorem}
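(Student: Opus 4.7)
\noindent\textit{Proof proposal.}
My plan is to recast the spectral problem in terms of the Poincar\'e (time-$T$) map of the linear evolution equation naturally associated to $L_{\Om}$ and then split into the two implications. Let $\Phi:C(\ol{\Om})\to C(\ol{\Om})$ be the time-$T$ solution operator of the Cauchy problem
$$
v_t=D\Big[\int_{\Om}J(x-y)v(t,y)\,dy-v\Big]+a(t,x)v,\qquad v(0,\cdot)=\psi.
$$
A standard Floquet-type calculation shows $r(\Phi)=e^{-T\lambda_{1}(-L_{\Om})}$, and $\lambda_{1}(-L_{\Om})$ is the principal eigenvalue with eigenfunction in $\XX_{\Om}^{++}$ if and only if $r(\Phi)$ is an eigenvalue of $\Phi$ with a strictly positive eigenvector in $C(\ol{\Om})$; strong positivity of $\Phi$, forced by $J(0)>0$ and the connectedness of $\Om$, makes Krein--Rutman applicable.

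To compute the essential spectral radius of $\Phi$, I would write $\Phi=\Phi_{0}+\mathcal{R}$, where $\Phi_{0}\psi(x)=e^{-T(D-a_{T}(x))}\psi(x)$ is the Poincar\'e map of the pointwise-in-$x$ ODE $v_t=-(D-a(t,x))v$, and $\mathcal{R}$ gathers the contribution of the convolution. A Dyson/Duhamel iteration shows $\mathcal{R}$ is compact on $C(\ol{\Om})$ because the convolution $v\mapsto\int_{\Om}J(x-y)v(y)\,dy$ is compact (it maps bounded sets of $C(\ol{\Om})$ to equicontinuous ones by continuity of $J$) and is only composed with bounded multiplication operators. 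Consequently,
$$
r_{\mathrm{ess}}(\Phi)=r_{\mathrm{ess}}(\Phi_{0})=\max_{x\in\ol{\Om}}e^{-T(D-a_{T}(x))}=e^{-T\lambda_{*}},
$$
so $\lambda_{1}(-L_{\Om})<\lambda_{*}$ is equivalent to $r(\Phi)>r_{\mathrm{ess}}(\Phi)$. The ``if'' direction now follows from the Nussbaum/Browder theorem together with Krein--Rutman for strongly positive operators: whenever the spectral radius strictly exceeds the essential spectral radius, it is an isolated eigenvalue of finite algebraic multiplicity, and strong positivity upgrades this to algebraic (hence geometric) simplicity with a strictly positive eigenvector, which then lifts by integration of the evolution equation to an eigenfunction of $-L_{\Om}$ in $\XX_{\Om}^{++}$.

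For the ``only if'' direction I would avoid the Poincar\'e map and argue directly. Rewriting $-L_{\Om}\varphi=\lambda_{1}\varphi$ as
$$
\varphi_t+(D-a(t,x)-\lambda_{1})\varphi=D\int_{\Om}J(x-y)\varphi(t,y)\,dy,
$$
freezing $x\in\ol{\Om}$ and applying variation of constants in $t$ together with the periodicity $\varphi(0,x)=\varphi(T,x)$ yields
$$
\bigl(1-e^{-T(D-a_{T}(x)-\lambda_{1})}\bigr)\varphi(0,x)=D\int_0^T e^{-\int_s^T(D-a(\tau,x)-\lambda_{1})d\tau}\!\int_{\Om} J(x-y)\varphi(s,y)\,dy\,ds.
$$
The right-hand side is strictly positive for every $x\in\ol{\Om}$ since $\varphi\in\XX_{\Om}^{++}$ and $J(0)>0$ with $J$ continuous, forcing $D-a_{T}(x)-\lambda_{1}>0$ for every $x$, i.e.\ $\lambda_{1}<\lambda_{*}$. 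The main obstacle is a rigorous extraction of $r_{\mathrm{ess}}(\Phi)=e^{-T\lambda_{*}}$: the convolution sits inside a time-ordered exponential, so cleanly isolating the compact perturbation from the multiplication part requires a careful iteration of Duhamel's formula rather than a single Trotter split.
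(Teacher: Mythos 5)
Your plan is sound and gives a correct proof, but it goes by a genuinely different route than the one the paper relies on. The paper cites this result from Rawal--Shen \cite{RS12} and, in Proposition \ref{prop-rs12-b}, imports the two key ingredients of their proof: a lower bound on the resolvent $(\alpha-H_\Om)^{-1}$ of the local part $H_\Om[v]=-v_t-Dv+av$ for $\alpha>-\lambda_*$, and the equivalence of $\lambda_1(-L_\Om)<\lambda_*$ with $r\big(K_\Om(\alpha_0-H_\Om)^{-1}\big)>1$ for some $\alpha_0>-\lambda_*$. That is a resolvent-level decomposition $L_\Om=H_\Om+K_\Om$, where the eigenvalue problem is reduced to a fixed-point problem for the compact operator $K_\Om(\alpha-H_\Om)^{-1}$. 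You instead work at the level of the Poincar\'e map $\Phi=\Phi(T,0)$, split $\Phi=\Phi_0+\mathcal R$ with $\Phi_0$ the multiplication operator $e^{-T(D-a_T(\cdot))}$, prove $\mathcal R$ compact, and then invoke $r_{\mathrm{ess}}(\Phi)=r_{\mathrm{ess}}(\Phi_0)=e^{-T\lambda_*}$ together with the Nussbaum/de~Pagter generalization of Krein--Rutman (not the classical compact version, which does not apply here since $\Phi$ is not compact). Both arguments isolate the same dichotomy \emph{local multiplication part} versus \emph{compact nonlocal part}; the resolvent approach handles the unbounded operator $-\partial_t$ directly via variation of constants, while your approach discretizes time first and then works with bounded operators on $C(\ol\Om)$, which makes the essential-spectrum computation cleaner.

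Two small remarks. First, your ``only if'' argument is correct and essentially self-contained: integrating the eigenequation in $t$ over one period for each fixed $x$ and using $\varphi\in\XX_\Om^{++}$, $J\ge0$, $J(0)>0$ and the openness of $\Om$ forces the strict positivity of the right-hand side, hence $D-a_T(x)-\lambda_1>0$ for every $x\in\ol\Om$. Second, you flag the compactness of $\mathcal R$ as ``the main obstacle'' and suggest iterating Duhamel; in fact a single Duhamel step already suffices. Since the generator has a bounded nonlocal part, the evolution family $s\mapsto\Phi(s,0)$ is norm-continuous, so
\[
\mathcal R\psi=D\int_0^T e^{-\int_s^T(D-a(\tau,\cdot))d\tau}\,K\big[\Phi(s,0)\psi\big]\,ds,\qquad K\psi=\int_\Om J(\cdot-y)\psi(y)\,dy,
\]
is a norm-convergent Bochner integral of compact operators (each $M_s\circ K\circ\Phi(s,0)$ is compact because $K$ is, and $s\mapsto M_s\circ K\circ\Phi(s,0)$ is norm-continuous), hence compact.
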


It is nice that the condition $\la_{1}(-L_{\Om})<\la_{*}$ in the above theorem is necessary and sufficient. However, it turns out it is rather hard to check when this condition is true, since it is related to both $\la_{1}(-L_{\Om})$ and $a(t,x)$. Therefore, it is necessary to find an easily verifiable sufficient condition for $\la_{1}(-L_{\Om})$ being the principal eigenvalue of $-L_{\Om}$. This leads to our first main result. Besides this, we also prove sup-inf characterizations of the principal eigenvalue under this condition. These results are summarized in the following theorem.

\begin{thmx}[Principal eigenvalue and sup-inf characterizations]\label{thm-pe-introduction}
Suppose {\bf(H1)} and {\bf(H2)}. If 
\begin{equation}\label{newcond1}
\frac{1}{\max_{y\in\ol{\Om}}a_{T}(y)-a_{T}}\notin L^{1}_{loc}(\ol{\Om}),
\end{equation}
then $\la_{1}(-L_{\Om})$ is the principal eigenvalue of $-L_{\Om}$. Moreover, there holds
$$
\la_{1}(-L_{\Om})=\lambda_{p}(-L_{\Om})=\lambda_{p}'(-L_{\Om}),
$$
where 
\begin{equation}\label{characterization}
\begin{split}
\lambda_{p}(-L_{\Om}):&=\sup\left\{\lambda\in\R:\exists\phi\in \mathcal{X}^{++}_{\Om}\,\,\text{s.t.}\,\,(L_{\Om}+\la)[\phi]\leq0\,\,\text{in ${\R\times\ol\O}$}\right\},\\
\lambda_{p}'(-L_{\Om}):&=\inf\left\{\lambda\in\R:\exists\phi\in \mathcal{X}^{++}_{\Om}\,\,\text{s.t.}\,\,(L_{\Om}+\la)[\phi]\geq0\,\,\text{in ${\R\times\ol\O}$}\right\}.
\end{split}
\end{equation}

\end{thmx}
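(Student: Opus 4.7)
The plan is to combine Theorem 1.1 of Rawal-Shen with a test-function argument tailored to hypothesis \eqref{newcond1}. By Theorem 1.1 it suffices to produce the strict inequality $\lambda_1(-L_\Om)<\lambda_*$; the sup-inf identities will then follow readily from the existence of a positive principal eigenfunction.

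\textbf{Step 1: Universal bounds.} First I would record the hypothesis-free chain
$$
\lambda_p(-L_\Om)\le\lambda_1(-L_\Om)\le\lambda_p'(-L_\Om).
$$
For any $\phi\in\XX^{++}_\Om$ admissible in the $\lambda_p$-definition, $\phi$ is a positive supersolution of $-L_\Om\phi=\lambda\phi$, and a comparison principle for the $T$-periodic nonlocal operator (essentially the one used by Rawal-Shen, or the maximum principle proved at the end of the paper) forces $\lambda\le\lambda_1(-L_\Om)$. The dual argument handles $\lambda_p'$.

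\textbf{Step 2: Time-periodic test function.} To upper bound $\lambda_p'$ strictly by $\lambda_*=D-M$, where $M:=\max_{\ol\Om}a_T$, I would work with the family
$$
\Phi_\ep(t,x):=\phi_\ep(x)\,\psi(t,x),\qquad \phi_\ep(x):=\frac{1}{M-a_T(x)+\ep},\qquad \psi(t,x):=\exp\left(\int_0^t\bigl(a(s,x)-a_T(x)\bigr)\,ds\right).
$$
The factor $\psi$ is strictly positive, continuous, bounded above and below by positive constants, and $T$-periodic in $t$ because $\int_0^T(a-a_T)(s,x)\,ds\equiv0$. It is designed so that $\pa_t\Phi_\ep=(a-a_T)\Phi_\ep$, which collapses the parabolic term in $L_\Om\Phi_\ep$ against the $a(t,x)\Phi_\ep$ term. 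Setting $\mu:=\lambda_*-\lambda$, a direct computation then gives
$$
L_\Om[\Phi_\ep]+\lambda\Phi_\ep=D\int_\Om J(x-y)\Phi_\ep(t,y)\,dy-\bigl(M-a_T(x)+\mu\bigr)\Phi_\ep(t,x),
$$
so the admissibility of $\Phi_\ep$ in the $\lambda_p'$-definition reduces to a pointwise inequality in which time enters only through the bounded positive factor $\psi$.

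\textbf{Step 3: Exploiting \eqref{newcond1}.} The non-integrability hypothesis forces $\int_\Om J(x-y)\phi_\ep(y)\,dy\to\infty$ as $\ep\to0^+$ uniformly on a neighborhood of $\{a_T=M\}\cap\ol\Om$, thanks to $J(0)>0$ and the continuity of $J$. Away from this singular set $\phi_\ep$ stays uniformly bounded. Splitting $\ol\Om$ into a neighborhood of $\{a_T=M\}$ and its complement, and using the uniform positive bounds on $\psi$, the connectedness of $\Om$ and the support condition on $J$ to chain the estimates across $\Om$, one can verify the pointwise inequality of Step 2 for every $(t,x)\in\R\times\ol\Om$ provided $\ep$ is sufficiently small and $\mu\in(0,\ep)$ is chosen suitably. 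This yields $\lambda_p'(-L_\Om)\le\lambda_*-\mu<\lambda_*$, and combined with Step 1 the strict bound $\lambda_1(-L_\Om)<\lambda_*$. I expect Step 3 to be the main technical obstacle: the blow-up of $\phi_\ep$ is localized near the maximum set of $a_T$ while the inequality must hold uniformly on all of $\R\times\ol\Om$, so propagating positivity across $\Om$ through iterated convolutions with $J$ (and controlling the $t$-dependence coming from $\psi$) is the delicate point, analogous to the elliptic constructions of Berestycki-Coville-Vo and Coville.

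\textbf{Step 4: Sup-inf identities.} Theorem 1.1 now produces a positive principal eigenfunction $\Phi\in\XX^{++}_\Om$ with $(L_\Om+\lambda_1)\Phi=0$. Testing with $\Phi$ in the definitions of $\lambda_p$ and $\lambda_p'$ delivers the reverse inequalities $\lambda_p(-L_\Om)\ge\lambda_1(-L_\Om)$ and $\lambda_p'(-L_\Om)\le\lambda_1(-L_\Om)$, which together with Step 1 close the three-way identity $\lambda_1(-L_\Om)=\lambda_p(-L_\Om)=\lambda_p'(-L_\Om)$.
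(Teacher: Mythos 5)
Your strategy differs fundamentally from the paper's. The paper proves $\la_1(-L_\Om)<\la_*$ by contradiction: it uses the splitting $L_\Om=H_\Om+K_\Om$, the explicit variation-of-constants formula for $(\al-H_\Om)^{-1}$, iterated lower bounds on $(K_\Om(\al-H_\Om)^{-1})^n 1$, and Gelfand's formula, showing that \eqref{newcond1} would force $r(K_\Om(\al-H_\Om)^{-1})>1$ for $\al$ near $-\la_*$, contradicting Proposition \ref{prop-rs12-b}(2). You instead try to exhibit an explicit $\Phi_\ep\in\XX^{++}_\Om$ with $(L_\Om+\la)[\Phi_\ep]\ge0$ for some $\la<\la_*$, so that $\la_p'(-L_\Om)<\la_*$. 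Your Steps 1, 2 and 4 are sound: the chain $\la_p\le\la_1\le\la_p'$ does hold without \eqref{newcond1} (via the relation $-\la_1=\ln r(\Phi(T,0))/T$ of Rawal--Shen and a comparison argument on the period map --- do not invoke Theorem \ref{thm-mp-introduction}, which assumes \eqref{newcond1} and would be circular); the factor $\psi$ indeed cancels the $t$-derivative against $a(t,x)$ as you compute; and Step 4 is exactly the ``easy'' testing step. The paper instead proves the two hard inequalities of Theorem \ref{thm-principal-vs-generalized} by a ratio trick ($w=\phi_1/\phi$ evaluated at an extremum), so your Steps 1 and 4 constitute a legitimate alternative route to the sup-inf identity.

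The gap is Step 3, and it is not a technicality. The pointwise inequality
\begin{equation*}
D\int_\Om J(x-y)\,\Phi_\ep(t,y)\,dy\ \ge\ \bigl(M-a_T(x)+\mu\bigr)\Phi_\ep(t,x)
\end{equation*}
cannot hold on all of $\R\times\ol\Om$ for small $\ep,\mu>0$. Near $\{a_T=M\}$ the left side blows up as $\ep\to0^+$ thanks to \eqref{newcond1}, so the inequality is fine there. But at any $x$ with $\dist(x,\{a_T=M\})>\gamma$ (the support radius of $J$), $\int_\Om J(x-y)\phi_\ep(y)\,dy$ stays bounded uniformly in $\ep$, the right side stabilizes at roughly $\psi(t,x)$, and the inequality fails whenever $D$ is small or $\psi$ has large time-oscillation --- neither of which is excluded by \textbf{(H1)}--\textbf{(H2)}. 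Since admissibility for $\la_p'$ requires $(L_\Om+\la)[\phi]\ge0$ \emph{everywhere}, ``chaining estimates through iterated convolutions'' cannot rescue a single fixed test function; you would need either a genuinely nonlocal construction that spreads the singularity across $\Om$, or to localize $\Phi_\ep$ to a small ball $B$ around a maximizer (where the inequality does hold) and transfer the strict bound from $B$ to $\Om$ via the monotonicity of $r(K_\Om(\al-H_\Om)^{-1})$ under domain inclusion --- which returns you to the resolvent machinery of Proposition \ref{prop-rs12-b} that the paper uses directly.
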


One  sees that the condition \eqref{newcond1} concerns the smoothness of $a_{T}(x)$ near its maximum points. More importantly, \eqref{newcond1} is not related to the dispersal kernel $J$ and $\la_{1}(-L_{\Om})$, and  therefore, it is very useful when we study the equation \eqref{main-eqn}  with scaled kernels later. Although the condition \eqref{newcond1} is only a sufficient condition, it is indeed sharp in the sense that a function $a(t,x)$ unfulfilling  \eqref{newcond1} can be constructed so that $-L_{\Om}$ does not admit any eigenvalue  (see e.g. \cite{Cov10}). The quantities $\la_{p}(-L_{\Om})$ and $\la_{p}'(-L_{\Om})$, always well-defined, are usually called  the \textit{generalized principal eigenvalues} of $-L_{\Om}$. These notions are originally introduced by Berestycki, Nirenberg and Varadhan in \cite{BNV94} to study the principal spectral theory of elliptic operators. Since then, they are widely used to study the principal spectral theory of various linear operators associated to reaction-diffusion equations and nonlocal dispersal equations (see \cite{BCV1,BR1,BR2,Cov10,Nadin1,Vo1,Vo2} and references therein). The equivalence of $\la_{1}(-L_{\Om})$, $\la_{p}(-L_{\Om})$ and $\la_{p}'(-L_{\Om})$ under the condition \eqref{newcond1} provides not only $\sup$-$\inf$ characterizations of $\la_{1}(-L_{\Om})$, but also alternative and powerful tools in the spirit of analysis to  study deeper qualitative properties of $\la_{1}(-L_{\Om})$ to be presented. 


In the presence of the principal spectral theory, namely, Theorem \ref{thm-pe-introduction}, we then move forward to study the global dynamics of solutions of the equation \eqref{main-eqn} in the non-scaled case with $m=0$ and $\si=1$, that is,
\begin{equation}\label{main-eqn-Dirichlet-simple1}
u_{t}(t,x)=D\left[\int_{\Om}J(x-y)u(t,y)dy-u(t,x)\right]+f(t,x,u(t,x)),\quad t>0,\quad x\in\ol{\Om}.
\end{equation}
To do so, we need to investigate Liouville-type results, namely, the existence/nonexistence, of positive entire solutions of the equation
\begin{equation}\label{main-eqn-Dirichlet-simple}
u_{t}(t,x)=D\left[\int_{\Om}J(x-y)u(t,y)dy-u(t,x)\right]+f(t,x,u(t,x)),\quad t\in\R,\quad x\in\ol{\Om}.
\end{equation}
Before stating out results, we remark that the global dynamics of \eqref{main-eqn-Dirichlet-simple1}  has been partially investigated by Rawal and Shen in \cite{RS12}. They proved that if $\la_{1}(-L_{\Om})<0$, solutions of \eqref{main-eqn-Dirichlet-simple1} converges, as $t\to\infty$, to the unique positive $T$-periodic solution of \eqref{main-eqn-Dirichlet-simple}. But, the global dynamics of \eqref{main-eqn-Dirichlet-simple1} and Liouville-type result of \eqref{main-eqn-Dirichlet-simple} when $\la_{1}(-L_{\Om})\geq0$ has not been investigated yet. Moreover, the problem in the critical case $\lambda_1(-L_\O) = 0$ seems to be challenging. These issues will be studied in the present paper.


Let $a(t,x)=f_{s}(t,x,0)$ for $(t,x)\in\R\times\ol{\Om}$, and $L_\O$ be the linear operator associated to the linearization of \eqref{main-eqn-Dirichlet-simple} at $u\equiv0$ defined by (\ref{main-eqn-linear}).  We prove the following theorem. 

\begin{thmx}[Bifurcation and global dynamics]\label{thm-persistence-criterion-b}
Suppose {\bf(H1)}, {\bf(H2)} and \eqref{newcond1}. Then, the equation \eqref{main-eqn-Dirichlet-simple} admits a solution in the space $\XX_{\Om}^{++}$ if and only if $\la_{1}(-L_{\Om})<0$. Moreover, if exists,  the solution denoted by $u^{*}$ is unique  in $\XX_{\Om}^{++}$.

%

Also, let $u(t,x;u_{0})$ be a solution of \eqref{main-eqn-Dirichlet-simple1} with initial data $u_0\in C(\ol{\Om})$, which is not identically zero and non-negative, then the following statements hold.
\begin{enumerate}
\item[(i)]
If $\la_{1}(-L_{\Om})<0$, then 
\begin{equation*}
\|u(t,\cdot;u_{0})-u^{*}(t,\cdot)\|_{\infty}\to0\quad\text{as}\quad t\to\infty,
\end{equation*}
where $\|\cdot\|_{\infty}$ is the sup norm on $C(\ol{\Om})$;

\item[(ii)] If $\la_{1}(-L_{\Om})>0$, then 
\begin{equation*}
\|u(t,\cdot;u_{0})\|_{\infty}\to0\quad\text{as}\quad t\to\infty.
\end{equation*}

\item[(iii)] If $\la_{1}(-L_{\Om})=0$ and $u(t,x;u_{0})$ is equi-continuous in space, namely, there exists $t_{0}>0$ such that for any $\ep>0$ there exists $\de>0$ such that 
\begin{equation}\label{equi}
x,y\in\ol{\Om},\,\,|x-y|\leq\de\quad\text{implies}\quad|u(t,x)-u(t,y)|\leq\de,\,\,\forall t\geq t_{0},
\end{equation}
then 
\begin{equation*}
\|u(t,\cdot;u_{0})\|_{\infty}\to0\quad\text{as}\quad t\to\infty.
\end{equation*}

\end{enumerate}
\end{thmx}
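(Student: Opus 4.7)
\emph{The Liouville dichotomy.} My plan to settle the equivalence ``a solution in $\XX_{\Om}^{++}$ exists iff $\la_{1}(-L_{\Om})<0$'' is to exploit the sup-inf characterization $\la_{1}=\la_{p}'$ supplied by Theorem~\ref{thm-pe-introduction}. For sufficiency, when $\la_{1}<0$ the same theorem provides a principal eigenfunction $\phi\in\XX_{\Om}^{++}$; by (H2)(3) the function $\ep\phi$ is a subsolution of \eqref{main-eqn-Dirichlet-simple} for all small $\ep>0$, while $S$ from (H2)(4) (or any sufficiently large constant) is a supersolution, so a monotone iteration driven by the $T$-periodic semiflow on $C(\ol{\Om})$ produces $u^{*}\in\XX_{\Om}^{++}$. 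For necessity, substituting a positive $T$-periodic solution $u^{*}$ into \eqref{main-eqn-Dirichlet-simple} gives $L_{\Om}u^{*}=g\cdot u^{*}$ with $g(t,x):=a(t,x)-f(t,x,u^{*}(t,x))/u^{*}(t,x)$, which is strictly positive on $\R\times\ol{\Om}$ by KPP. Then for any $\la_{0}\in(-\min g,0)$ the test function $u^{*}$ witnesses $\la_{p}'(-L_{\Om})\leq\la_{0}<0$, so Theorem~\ref{thm-pe-introduction} yields $\la_{1}(-L_{\Om})<0$. Uniqueness follows from a sliding argument: for $u^{*},v^{*}\in\XX_{\Om}^{++}$, the quantity $k^{*}:=\inf\{k>0:kv^{*}\geq u^{*}\}$ must be $\leq 1$, for otherwise the scaling inequality $k f(t,x,v^{*})/v^{*}>f(t,x,kv^{*})/(kv^{*})$ (valid for $k>1$ by (H2)(3)) makes $k^{*}v^{*}$ a strict supersolution touching $u^{*}$, contradicting the nonlocal strong maximum principle to be proved later in the paper.

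\emph{Cases (i) and (ii).} In case (i), a nonzero nonnegative $u_{0}$ propagates, via positivity propagation along the kernel $J$, to $u(t_{0},\cdot;u_{0})\geq\eta\phi(t_{0},\cdot)$ on $\ol{\Om}$ for some $t_{0}>0$, $\eta>0$. Applying the $T$-periodic semiflow to $\eta\phi$ (iterates increasing) and to $M\geq\norma{u_{0}}$ (iterates decreasing), both limits lie in $\XX_{\Om}^{++}$ and, by the uniqueness step, coincide with $u^{*}$; the sandwich then yields $\norma{u(t,\cdot;u_{0})-u^{*}(t,\cdot)}\to 0$. In case (ii), take $\phi\in\XX_{\Om}^{++}$ with $L_{\Om}\phi=-\la_{1}\phi$ (from Theorem~\ref{thm-pe-introduction}) and, for $0<\mu<\la_{1}$, set $v(t,x)=Ce^{-\mu t}\phi(t,x)$. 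Using $L_{\Om}\phi=-\la_{1}\phi$ and the KPP bound $f(t,x,v)\leq a(t,x)v$ gives
$$
v_{t}(t,x)-D\Big[\int_{\Om}J(x-y)v(t,y)\,dy-v(t,x)\Big]-f(t,x,v(t,x))\geq(\la_{1}-\mu)v(t,x)>0,
$$
so $v$ is a supersolution of \eqref{main-eqn-Dirichlet-simple1}. Choosing $C$ so large that $C\phi(0,\cdot)\geq u_{0}$ and invoking comparison delivers $\norma{u(t,\cdot;u_{0})}\leq Ce^{-\mu t}\norma{\phi(t,\cdot)}\to 0$.

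\emph{The critical case $\la_{1}=0$.} This is the main obstacle: the Liouville step rules out a positive $T$-periodic attractor, and the decay argument of (ii) collapses for $\mu=0$, so compactness must replace a quantitative decay rate. The strategy is to use the equi-continuity hypothesis \eqref{equi} together with the uniform bound $\norma{u(t,\cdot;u_{0})}\leq M$ (for $t\geq t_{0}$) and the Lipschitz-in-$t$ estimate $|u_{t}|\leq C'$ read off the equation to conclude, via Arzela-Ascoli, that the family $\{u(\cdot+t,\cdot;u_{0}):t\geq t_{0}\}$ is precompact in $C([0,2T]\times\ol{\Om})$. Suppose toward a contradiction that $\limsup_{t\to\infty}\norma{u(t,\cdot;u_{0})}\geq\ep>0$, choose $t_{n}\to\infty$ with $\norma{u(t_{n},\cdot;u_{0})}\geq\ep$, and write $t_{n}=k_{n}T+s_{n}$ with $s_{n}\to s_{\infty}\in[0,T]$ and $k_{n}\to\infty$. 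Because $f$ is $T$-periodic in $t$, the shifts $u(\cdot+k_{n}T,\cdot;u_{0})$ solve \eqref{main-eqn-Dirichlet-simple}, and a subsequential uniform limit $v$ is an entire nonnegative $T$-periodic solution of \eqref{main-eqn-Dirichlet-simple} with $v(s_{\infty},\cdot)\not\equiv 0$. I would then show $v\in\XX_{\Om}^{++}$ by the nonlocal strong maximum principle: if $v(t_{0},x_{0})=0$ at some global minimum, evaluating the equation at $(t_{0},x_{0})$ together with $J(0)>0$ forces $v(t_{0},\cdot)\equiv 0$ on a neighborhood of $x_{0}$; the connectedness of $\ol{\Om}$ propagates the zero to all of $\ol{\Om}$, and the uniqueness of the $t$-Cauchy problem with zero initial data (using $f(t,x,0)=0$) gives $v\equiv 0$, a contradiction. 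Hence $v\in\XX_{\Om}^{++}$, and the Liouville dichotomy of Step~1 forces $\la_{1}(-L_{\Om})<0$, contradicting $\la_{1}(-L_{\Om})=0$. Therefore $\norma{u(t,\cdot;u_{0})}\to 0$.
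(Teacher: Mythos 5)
Your Liouville dichotomy and cases (i)--(ii) are essentially correct and in places take a genuinely different, cleaner route than the paper. For necessity of $\la_1<0$, the paper argues by contradiction: normalize the principal eigenfunction $\phi_1<v^*$, note that $\phi_1$ is a super-solution of \eqref{main-eqn-Dirichlet-simple} when $\la_1\geq0$, and apply the nonlinear comparison principle (Proposition \ref{Prop1}) to contradict the normalization. Your direct computation $L_\Om u^*=g\cdot u^*$ with $g>0$ (by KPP) followed by the sup-inf characterization $\la_1=\la_p'\leq\la_0<0$ is more elegant and achieves the same end. For case (ii), the paper bounds $u(t,\cdot;u_0)\leq\Phi(t,0)u_0$ and then invokes Gelfand's formula to show $r(\Phi(T,0))=e^{-\la_1 T}<1$; your exponential-decay super-solution $Ce^{-\mu t}\phi$ (for $0<\mu<\la_1$) is a more elementary comparison that bypasses the spectral-radius machinery. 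One small inaccuracy: in the uniqueness sliding argument you cite ``the nonlocal strong maximum principle to be proved later in the paper'' --- but Theorem \ref{thm-mp-introduction} is proved only under $\la_1\geq0$ and cannot be invoked here. What actually closes the sliding argument is the direct evaluation at the touching point $(t_0,x_0)$ (where $w_t(t_0,x_0)=0$ by $T$-periodicity, the nonlocal term is nonnegative, and the KPP strict inequality gives a contradiction), which is exactly Proposition \ref{Prop1}.

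There is a genuine gap in case (iii). You extract time-shifts $u(\cdot+k_nT,\cdot;u_0)$ and pass to a subsequential uniform limit $v$, and then assert that $v$ is a $T$-periodic solution, so that you can feed it into the Liouville dichotomy. But the limit $v$ is only an \emph{entire}, nonnegative, bounded solution of \eqref{main-eqn-Dirichlet-simple}; $T$-periodicity of the shifted functions does not survive the limit (the shifts solve a $T$-periodic equation, they are not themselves $T$-periodic). Consequently you cannot conclude $v\in\XX_\Om^{++}$ (a space of $T$-periodic functions) and the Liouville dichotomy does not apply. The paper resolves precisely this obstruction by a separate comparison lemma (Proposition \ref{Prop1-2}): a nonnegative, bounded, spatially equi-continuous \emph{entire} (not necessarily periodic) solution must lie below any $T$-periodic super-solution in $\XX_\Om^{++}$. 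Its proof requires a further translation-compactness argument in time combined with the parabolic strong maximum principle of \cite{HMMV03} to handle the touching. With $\la_1=0$, the principal eigenfunction $\phi_1$ is a super-solution, and Proposition \ref{Prop1-2} then forces the limit $v$ to satisfy $v\leq\eta\phi_1$ for every $\eta>0$, hence $v\equiv0$, contradicting the positivity at $(s_\infty,x_*)$. Your outline needs to replace the (incorrect) periodicity-of-the-limit step with this entire-solution comparison lemma, and the strong maximum principle you invoke to propagate zeros must be the parabolic one in that lemma, not the $T$-periodic maximum principle of Theorem \ref{thm-mp-introduction}.
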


In Theorem \ref{thm-persistence-criterion-b}, we fully characterize the bifurcation of non-negative $T$-periodic solutions of the equation \eqref{main-eqn-Dirichlet-simple} by the sign of the principal eigenvalue. Meanwhile, the global dynamics of \eqref{main-eqn-Dirichlet-simple1} in the case $\la_{1}(-L_{\Om})\geq0$ is studied. It is worthwhile to point out that  in the case $f(t,x,s)=f(x,s)$, the global dynamics of \eqref{main-eqn-Dirichlet-simple1} based on the bifurcation result of \eqref{main-eqn-Dirichlet-simple} have attracted a lot of attention recently due to their significance in applications and underlying mathematical challenges (see e.g. \cite{Coville1,Cov10,ShXi15-1,BCV1,BCV2,ShZh10,Vo1}). In particular, to investigate the problem, the authors in \cite{Coville1, Cov10, BCV1,BCV2} took a PDE approach, while the authors in \cite{ShXi15-1, ShZh10} employed a dynamical system approach. In the proof of Theorem \ref{thm-persistence-criterion-b}, we take advantage of both PDE and dynamical system approaches to tackle those essential difficulties stemming from the lack of regularizing effects of the semigroup generated by the nonlocal dispersal operator and the presence of time-dependence of $f$. 

In the critical case $\lambda_1(-L_\O)=0$, the global dynamics is only proven under the additional condition \eqref{equi}. We remark that in the case $f(t,x,s)=f(x,s)$ treated in \cite{Coville1, Cov10, BCV1,BCV2}, such a condition is not required thanks to a Harnack-type inequality for nonlocal elliptic-type equations and bootstrap arguments. But, for nonlocal parabolic-type equations as in our case, no Harnack-type inequality is known, and bootstrap arguments together with the variation of constants formula are not helping due to the lack of regularizing effects of the semigroup generated by the nonlocal dispersal operator as just mentioned. Although, we believe that \eqref{equi} is  sharp,  the global dynamics of \eqref{main-eqn-Dirichlet-simple1} is still an open problem without this condition.

\begin{description}
\item[Open problem 1] Can one prove the global dynamics of the equation \eqref{main-eqn-Dirichlet-simple1} in the critical case $\lambda_1(-L_\O)=0$ without the additional condition \eqref{equi}?
\end{description}

Based on Theorem \ref{thm-pe-introduction} and Theorem \ref{thm-persistence-criterion-b}, we now turn to study the effects of the dispersal rate $D$ and the dispersal range characterized by $\si$ on the principal eigenvalue and the positive $T$-periodic solution associated to the equation \eqref{main-eqn}. 

We first study the effects of the dispersal rate $D$. For this purpose, it is more convenient to consider those associated to the non-scaled equations \eqref{main-eqn-Dirichlet-simple1} and \eqref{main-eqn-Dirichlet-simple}. In the next theorem, we write $\la_{1}^{D}(-L_{\Om})$ instead of $\la_{1}(-L_{\Om})$ to indicate the $D$-dependence.

\begin{thmx}[Effects of the dispersal rate]\label{thm-effect-dispersal-rate-intro}
Suppose {\bf(H1)}, {\bf(H2)} and \eqref{newcond1}. The following properties hold.

\begin{enumerate}
\item There holds
$$
\la_{1}^D(-L_{\Om})\to\begin{cases}
-\max_{x\in\ol{\Om}}a_{T}(x) &\quad\text{as}\quad D\to0^{+},\\
\infty&\quad\text{as}\quad D\to\infty.
\end{cases}
$$

\item If $a(t,x)=\al(t)+\beta(x)$, then $D\mapsto\la_{1}^D(-L_{\Om})$ is increasing.

\item If $\max_{x\in\ol{\Om}}a_{T}(x)>0$, then the equation \eqref{main-eqn-Dirichlet-simple} admits a unique solution $u^{*}_{D}\in\XX_{\Om}^{++}$ that is globally asymptotically stable for each $0<D\ll1$. The equation \eqref{main-eqn-Dirichlet-simple} admits no solution in the space $\XX^{++}_{\Om}$ for each $D\gg1$.

\item If $\min_{x\in\ol{\Om}}a_{T}(x)>0$, then there holds the limit
$$
\lim_{D\to0^{+}}u^{*}_{D}(t,x)=v^{*}(t,x)\quad\text{uniformly in}\quad(t,x)\in\R\times\ol{\Om},
$$
where $v^*(t,x)$ is the unique positive and $T$-periodic solution of the equation $v_{t}=f(t,x,v)$ for every $x\in\ol{\Om}$.

\end{enumerate}
\end{thmx}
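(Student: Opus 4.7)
The plan is to exploit the sup-inf characterizations from Theorem~\ref{thm-pe-introduction} as the central tool, to convert eigenvalue information into dynamical information via Theorem~\ref{thm-persistence-criterion-b}, and to use sub- and super-solution comparison for the nonlocal parabolic equation throughout.

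For part~(1), I produce matching upper and lower bounds in the $\la_p/\la_p'$ characterizations. As $D\to 0^+$, the lower bound comes from testing $\la_p$ with the strictly positive $T$-periodic function $\phi(t,x)=\exp\bigl(\int_0^t[a(s,x)-a_T(x)]\,ds\bigr)$; a direct computation gives $-L_\Om[\phi]/\phi=-a_T(x)+D\bigl(1-\int_\Om J(x-y)\phi(t,y)dy/\phi(t,x)\bigr)$, and boundedness of $\phi$ away from $0$ and $\infty$ makes the $D$-term $O(D)$, so $\la_1^D\ge -\max a_T-O(D)$. The matching upper bound comes from the $\la_p'$-characterization with a concentrated variant $\phi_D(t,x)=\chi_{\ep(D)}(x)\exp\bigl(\int_0^t[a(s,x)-a_T(x)]ds\bigr)$, where $\chi_\ep$ is a smooth positive Gaussian-type bump peaked at $x_0:=\arg\max a_T$ on scale $\ep$, and $\ep(D)\to 0$ is tuned so that $D\bigl(1-\int J\phi_D/\phi_D\bigr)$ becomes strongly negative away from $x_0$, forcing $\sup_{(t,x)}[-L_\Om\phi_D/\phi_D]\le -\max a_T+o_D(1)$. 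For $D\to\infty$, I use the strictly concave trial $\phi(x)=1-|x-x_0|^2/R^2$ with $x_0$ a fixed interior point of $\Om$ and $R$ large enough that $\phi>0$ on $\ol{\Om}$: symmetry of $J$ and strict convexity of $z\mapsto|z|^2$ imply $\phi(x)-\int_\Om J(x-y)\phi(y)\,dy\ge c_0>0$ uniformly, hence $-L_\Om\phi/\phi\ge Dc_0/\phi-\|a\|_\infty\to\infty$, giving $\la_1^D\to\infty$ via the $\la_p$-characterization.

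For part~(2), the separable case $a(t,x)=\alpha(t)+\beta(x)$ admits the substitution $\phi(t,x)=e^{A(t)}\chi(x)$ with $A(t)=\int_0^t[\alpha(s)-\bar\alpha]\,ds$ (which is $T$-periodic), reducing the $L_\Om$-eigenvalue problem to the time-independent nonlocal problem $-D[\int_\Om J(x-y)\chi(y)dy-\chi(x)]-\beta(x)\chi(x)=\mu^D\chi(x)$ on $\ol\Om$, with $\la_1^D(-L_\Om)=\mu^D-\bar\alpha$. Monotonicity of $D\mapsto\mu^D$ is a classical feature of the elliptic nonlocal principal eigenvalue, derivable directly from the $\la_p/\la_p'$-characterizations applied to this time-independent operator. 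Part~(3) then follows at once: part~(1) gives $\la_1^D<0$ for $D\ll 1$ (since $-\max a_T<0$) and $\la_1^D>0$ for $D\gg 1$, whence Theorem~\ref{thm-persistence-criterion-b} yields existence, uniqueness, and global stability of $u^*_D$ in the first case, and non-existence of a positive $T$-periodic solution in the second.

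For part~(4), under $\min a_T>0$ the parametric ODE $v_t=f(t,x,v)$ admits, for each $x\in\ol{\Om}$ separately, a unique positive $T$-periodic solution $v^*(\cdot,x)$, and $C^1$-dependence of $f$ on $x$ gives $v^*\in\XX_\Om^{++}$. I would compare $u^*_D$ with the one-parameter family $w^\pm_\delta(t,x):=(1\pm\delta)v^*(t,x)$: by the strict decrease of $s\mapsto f(t,x,s)/s$ from~{\bf(H2)}(3), $w^+_\delta$ is a strict super-solution and $w^-_\delta$ a strict sub-solution of the $T$-periodic problem~\eqref{main-eqn-Dirichlet-simple} up to an $O(D)\|v^*\|_{\infty}$ error coming from $D[\int_\Om J(x-y)v^*(t,y)dy-v^*(t,x)]$; for $D\le D_0(\delta)$ the strict signs persist, and the nonlocal parabolic comparison principle together with uniqueness in $\XX_\Om^{++}$ from Theorem~\ref{thm-persistence-criterion-b} sandwiches $u^*_D$ between $w^-_\delta$ and $w^+_\delta$, after which $\delta\to 0^+$ gives the desired uniform convergence. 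The main obstacle I expect is the $D\to 0^+$ upper bound in part~(1): concentrating at the argmax of $a_T$ requires a $D$-dependent test function and a delicate balancing of the concentration scale $\ep(D)$ against the diffusion strength $D$, together with careful control of $\int J\chi_{\ep(D)}/\chi_{\ep(D)}$ both near and far from $x_0$, which is the most technical estimate of the proof.
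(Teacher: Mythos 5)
Your overall architecture (sup-inf characterizations from Theorem~\ref{thm-pe-introduction} feeding into Theorem~\ref{thm-persistence-criterion-b} and sub/super-solution comparison) matches the paper's, but there are two genuine gaps, both in places you correctly flagged as hard.

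\textbf{Part (1), $D\to 0^+$ upper bound.} You identify this as the most technical part and propose a $D$-dependent concentrated bump $\phi_D=\chi_{\ep(D)}\cdot e^{\int_0^t[a-a_T]ds}$ in the $\la_p'$-characterization. This misses the point that the paper already has an \emph{automatic} upper bound: Theorem~\ref{thm-new-sufficient-cond-b} (equivalently, the Rawal--Shen criterion in Theorem~\ref{prop-principal-e}) gives $\la_1^D(-L_\Om)<\la_*=D-\max_{\ol\Om}a_T$, so $\limsup_{D\to 0^+}\la_1^D\leq -\max a_T$ with no work at all. Beyond being unnecessary, your concentrated test function is not clearly viable. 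Computing $-L_\Om\phi_D/\phi_D=-a_T(x)-D[\int J\phi_D/\phi_D-1]$, the requirement $\sup_x\bigl[\max a_T-a_T(x)-D\int J\phi_D/\phi_D\bigr]\leq o_D(1)-D$ is delicate at intermediate scales $|x-x_0|\sim\ep(D)$: there $\max a_T-a_T(x)\sim c\,\ep^2$ while $D\int J\phi_D/\phi_D\sim D\,J(0)\,\ep^N$, and for $N\geq 2$ the latter cannot dominate the former for any admissible choice of $\ep(D)\to 0$ with $D\to 0$. Balancing the three regimes (near, intermediate, far from $x_0$) does not obviously close, and the paper's trivial route should be used instead.

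\textbf{Part (2), monotonicity.} Your reduction of the separable case $a(t,x)=\al(t)+\beta(x)$ to a time-independent problem via $\phi(t,x)=e^{A(t)}\chi(x)$, $A(t)=\int_0^t[\al-\bar\al]$, matches the paper's decomposition $L_\Om=L_\Om^\CS+L_\Om^\CT$ and the identity $\la_1^D(-L_\Om)=\la_1^\CS(D)+\bar\al$. However, your claim that strict monotonicity of $D\mapsto\la_1^\CS(D)$ is ``derivable directly from the $\la_p/\la_p'$-characterizations applied to this time-independent operator'' is unsubstantiated. Applying $\chi_{D_1}$ (the eigenfunction at $D_1$) as a test function at level $D_2>D_1$ yields a residual $(D_2-D_1)[\int J\chi_{D_1}-\chi_{D_1}]$ whose sign is not definite, so the sup-inf comparison gives nothing. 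What the paper actually exploits is the $L^2(\Om)$ \emph{self-adjointness} of the purely spatial operator $L_\Om^\CS$: differentiating the eigenvalue equation in $D$, multiplying by $\chi_D^\CS$, integrating, and killing the cross term gives
$\partial_D\la_1^\CS(D)=\bigl(\int(\chi_D^\CS)^2-\int\!\!\int J(x-y)\chi_D^\CS(x)\chi_D^\CS(y)\bigr)/\int(\chi_D^\CS)^2>0$.
Self-adjointness (equivalently the Rayleigh quotient) is essential here and is not replaced by the generalized principal eigenvalue characterizations.

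\textbf{Part (1), $D\to\infty$.} Your concave quadratic $\phi(x)=1-|x-x_0|^2/R^2$ is a genuine alternative to the paper's use of the elliptic principal eigenpair $(\la^0,\psi^0)$ of $-L_\Om^0$ cited from Shen--Xie. It does work, and is more self-contained, provided you choose $R$ so that $\phi\geq 0$ on a $\gamma$-neighborhood of $\ol\Om$ (not merely on $\ol\Om$): writing $\phi(x)-\int_\Om J(x-y)\phi(y)dy=\int_{\R^N}J(x-y)[\phi(x)-\phi(y)]dy+\int_{\R^N\setminus\Om}J(x-y)\phi(y)dy$, symmetry of $J$ makes the first term equal $\int J(z)|z|^2dz/R^2>0$, and the second term is nonnegative once $\phi\geq 0$ on $\ol\Om+B_\gamma(0)$. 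Parts (3) and (4) follow the paper's lines.
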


We emphasize that due to the unboundedness of $L_{\Om}$ and the non-self-adjointness of $L_{\Om}$ resulting in the lack of the usual $L^{2}(\Om)$ variational formula for the principal eigenvalue if exists, we cannot invoke the techniques used in the  papers \cite{BCV1, Cov10,Coville1,ShXi15-1} and this causes lots of troubles. Thanks to the characterizations \eqref{characterization}, these difficulties raised in the proof of Theorem \ref{thm-effect-dispersal-rate-intro} can be overcome. These characterizations are also very useful in the study of the effect of dispersal range in Theorem \ref{thm-scaling-limits-introduction}. Theorem \ref{thm-effect-dispersal-rate-intro}(1)(3) shows that, provided $\max_{x\in\ol{\Om}}a_{T}(x)>0$, the small dispersal rates are favored, while the large dispersal rates are always unfavored. It would be interesting to know whether or not $\la_{1}^{D}(-L_{\Om})$ is monotone with respect to $D$. It is referred to \cite{HMP01} for the construction of a non-monotone sequence of principal eigenvalues of elliptic operators with Neumann boundary condition, and therefore, we believe that there is no monotonicity in general. However, in a special case as in the last statement of Theorem \ref{thm-effect-dispersal-rate-intro}(2), we prove the monotonicity.

Now, we study the effects of the dispersal range characterized by $\si$. To do so, let us consider the following operator
$$
L_{\Om,m,\si}[v](t,x)=-v_{t}(t,x)+\frac{D}{\si^{m}}\left[\int_{\Om}J_{\si}(x-y)v(t,y)dy-v(t,x)\right]+a(t,x)v(t,x),\quad (t,x)\in\R\times\ol{\Om}
$$
associated to the linearization of \eqref{main-eqn} at $u\equiv0$. We prove the following result.

\begin{thmx}[Scaling limits of the principal eigenvalue]\label{thm-scaling-limit-eigenvalue}
Suppose {\bf(H1)}, {\bf(H2)} and \eqref{newcond1}. 
\begin{enumerate}
\item As $\si\to\infty$, there holds
$$
\la_{1}(-L_{\Om,m,\si})\to\begin{cases}
D-\max_{x\in\ol{\Om}}a_{T}(x),\quad&m=0,\\
-\max_{x\in\ol{\Om}}a_{T}(x),\quad&m>0.
\end{cases}
$$

\item  As $\si\to0^{+}$, there holds
$$
\la_{1}(-L_{\Om,m,\si})\to-\max_{x\in\ol{\Om}}a_{T}(x),
 \quad\quad\forall m\in[0,2).
$$

\item In the case $m=0$, if $\O$ contains the origin and $a(t,x)$ is radially symmetric and radially decreasing with respect to $x$, namely, $a(t,x)=a(t,y)$ if $|x|=|y|$ and $a(t,x)\leq a(t,y)$ if $|x|\geq |y|$ for all $t\in \R$, then $\si\mapsto\la_{1}(-L_{\Om,0,\si})$ is non-decreasing.

\end{enumerate}
\end{thmx}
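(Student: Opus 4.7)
I use the sup--inf characterisations of Theorem~A together with the natural ``time-compensator''
\[
\Phi(t,x):=\exp\!\Bigl(\int_0^t(a(s,x)-a_T(x))\,ds\Bigr)\in\XX_\Om^{++},
\]
which satisfies $\Phi_t=\Phi(a-a_T)$, so that $(L_{\Om,m,\si}+\lambda)\Phi = \Phi(a_T+\lambda)+\frac{D}{\si^m}(J_\si*\Phi-\Phi)$. For Part (1), as $\si\to\infty$, $\|J_\si\|_\infty\le\si^{-N}\|J\|_\infty\to 0$ and $\si^{-m}\to 0$ if $m>0$, so the nonlocal remainder vanishes uniformly; with $\lambda=D\mathbf{1}_{\{m=0\}}-\max a_T-\e$, $\Phi$ is a strict subsolution for $\si$ large, so the $\lambda_p$-characterisation gives $\liminf\lambda_1\ge D\mathbf{1}_{\{m=0\}}-\max a_T$. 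For the reverse inequality I use $\phi_\si=\Phi(\eta+\delta_\si)$ in the $\lambda_p'$-characterisation, where $\eta$ is a smooth nonnegative bump near a maximiser $x_0$ of $a_T$ with $a_T\ge\max a_T-\e/2$ on $\supp\eta$, and $\delta_\si$ is tuned to the kernel-decay scale (order $\si^{-N}$ for $m=0$, order $\si^{-N-m}$ for $m>0$), so that the positive contribution $\frac{D}{\si^m}J_\si*(\Phi\eta)$ dominates the small negative term $\delta_\si(a_T+\lambda-D\mathbf{1}_{\{m=0\}})$ off the bump.

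For Part (2), as $\si\to 0^+$, the lower bound again uses $\Phi$: on the interior strip $\Om_{\si\gamma}:=\{x:\dist(x,\partial\Om)>\si\gamma\}$, Taylor expansion combined with the symmetry of $J$ gives $\frac{D}{\si^m}(J_\si*\Phi-\Phi)=O(\si^{2-m})=o(1)$ since $m<2$, while on the boundary strip $J_\si*\Phi-\Phi\le 0$ actually helps the subsolution inequality. For the upper bound I localise: pick $\eta\in C^\infty_c(\Om)$ supported in $\{a_T>\max a_T-\e/2\}\cap\Om_{\si\gamma}$, and set $\phi_\si=\Phi\eta+\delta_\si\Psi$, with $\Psi\in\XX_\Om^{++}$ an auxiliary positive function built slice-wise so that $-\Psi_t+(a+\lambda)\Psi\ge 0$ off the bump; together with $J_\si*(\Phi\eta)-\Phi\eta=O(\si^2)$ on $\supp\eta\stst\Om$, this yields $(L+\lambda)\phi_\si\ge 0$, so $\limsup\lambda_1\le-\max a_T$.

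For Part (3), radial symmetry of $a$ and uniqueness of the principal eigenfunction (Theorem~A) make $\phi_{\si_2}(t,\cdot)$ radial, and strict radial monotonicity of $a_T$ forces it radially decreasing in $|x|$. For $\si_1<\si_2$,
\[
(L_{\Om,0,\si_1}+\lambda_1(\si_2))\phi_{\si_2}=D\bigl(J_{\si_1}*\phi_{\si_2}-J_{\si_2}*\phi_{\si_2}\bigr),
\]
which is $\ge 0$ once one establishes the pointwise rearrangement inequality $(J_{\si_1}*\phi)(x)\ge(J_{\si_2}*\phi)(x)$ for every radially decreasing $\phi\ge 0$ (extended by zero outside $\Om$) and symmetric $J$, via the layer-cake representation $\phi=\int_0^\infty\mathbf 1_{B_{r(s)}(0)}\,ds$ which reduces it to indicators of origin-centred balls, where the change of variables $y=\si z$ makes monotonicity transparent. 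Then $\phi_{\si_2}$ is a supersolution for $L_{\Om,0,\si_1}+\lambda_1(\si_2)$, so $\lambda_1(\si_1)=\lambda_p'(-L_{\Om,0,\si_1})\le\lambda_1(\si_2)$. The hardest single step is the upper bound in Part (2): the limit operator $-\partial_t+a$ satisfies $\lambda_p<\lambda_p'$, so no global supersolution at $\lambda\in(-\max a_T,-\min a_T)$ exists in the limit, and constructing the $\delta_\si\Psi$ correction that recovers one for small $\si>0$ is the critical technical point.
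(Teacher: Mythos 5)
Your lower bounds in Parts (1) and (2) coincide with the paper's: the paper uses the same time-compensator $\phi(t,x)=e^{\int_0^t(a(s,x)-a_T(x))\,ds}$ as a test function for $\la_p$, and for Part (2) it likewise extends it to a $C^2$ function on $\R\times\R^N$ and Taylor-expands to get the $O(\si^{2-m})$ error. For the upper bound in Part (1), however, the paper simply invokes the necessary-and-sufficient criterion $\la_1(-L_{\Om,m,\si})<\tfrac{D}{\si^m}-\max_{\ol\Om}a_T$ from Proposition~\ref{prop-principal-e-variation-1}(1), which gives the $\limsup$ for free; your $\la_p'$ construction with $\Phi(\eta+\delta_\si)$ can be made to work but is needless effort. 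For the upper bound in Part (2) --- which you rightly flag as the hard step --- the paper avoids the singular limit of $\la_p'$ on $\Om$ altogether by first shrinking the domain to a small ball $B_{\si^k}$ around a maximiser of $a_T$ via domain monotonicity of $\la_p$ (Proposition~\ref{prop-principal-e-variation-1}(4)), and then running a supersolution argument on the small ball with a cut-off test function and carefully matched scales ($k=(m+2N)/N$). Your plan of an additive correction $\delta_\si\Psi$ on all of $\Om$ is left as ``the critical technical point'' without being carried out, so as written Part (2) is incomplete.

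Part (3) contains a genuine error. Your argument hinges on the pointwise inequality $(J_{\si_1}*\phi)(x)\ge(J_{\si_2}*\phi)(x)$ for all $x\in\Om$, every radially decreasing $\phi\ge0$ extended by zero, and $\si_1<\si_2$, which you reduce by layer-cake to origin-centred indicators. This inequality is false away from the origin. Take $N=1$, $J$ supported in $[-1,1]$, $\Om=(-2,2)$, $\phi=\mathbf 1_{(-1,1)}$, and $x=\tfrac32\in\Om$; the change of variables $u=(x-y)/\si$ gives
\[
(J_\si*\phi)\bigl(\tfrac32\bigr)=\int_{1/(2\si)}^{5/(2\si)}J(u)\,du,
\]
which is identically zero for $\si\le\tfrac12$ (the lower limit exceeds the support) and strictly positive for $\si$ slightly larger than $\tfrac12$. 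Thus $(J_{\si_1}*\phi)(\tfrac32)=0<(J_{\si_2}*\phi)(\tfrac32)$ for suitable $\si_1<\si_2$, exactly the opposite of what you need. (The change of variables is only ``transparent'' at $x=0$.) In addition, you assert without proof that the principal eigenfunction $\phi_{\si_2}$ is radially \emph{decreasing}; radial symmetry follows from uniqueness, but radial monotonicity is not obvious for nonlocal operators. The paper's proof of Part (3) sidesteps both issues entirely by rescaling $x=\si\omega$, which turns $\la_p(-L_{\Om,0,\si})$ into $\la_p(-\M_{\Om_\si,0,1}-a_\si)$ with a \emph{fixed} kernel, $\Om_\si=\si^{-1}\Om$ and $a_\si(t,\omega)=a(t,\si\omega)$; then for $\si_1\ge\si_2$ one has $\Om_{\si_1}\subset\Om_{\si_2}$ (because $0\in\Om$) and $a_{\si_1}\le a_{\si_2}$ on $\Om_{\si_1}$ (because $a$ is radially decreasing), and both monotonicities raise $\la_p$ directly from its $\sup$-characterisation, with no information about the eigenfunction required.
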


\begin{remark}\label{threshold}
In the case of Theorem \ref{thm-scaling-limit-eigenvalue}(3), if $\max_{x\in\ol{\Om}}a_{T}(x)\in (0,D)$, then the monotonicity and the continuity (see Proposition \ref{prop-principal-e-variation-1}(5)) of $\si\mapsto\la_{1}(-L_{\Om,0,\si})$ together with Theorem \ref{thm-scaling-limit-eigenvalue}(1)(2) imply the existence of a threshold value $\sigma^*>0$ such that $\la_{1}(-L_{\Om,0,\si})<0$ if and only if $\si<\si^{*}$, and hence, \eqref{main-eqn-Dirichlet-simple} admits a unique solution $u^{*}_{\si}\in \XX_{\Om}^{++}$ that is globally asymptotically stable if and only if $\si<\si^{*}$. This $\sigma^*$ is usually referred to as the {\rm critical range for persistence}.
\end{remark}

Results as in Theorem \ref{thm-scaling-limit-eigenvalue}, in the case of nonlocal elliptic-type operators, have been obtained in \cite{BCV1,ShXi15-1}. The fact being lack of the usual $L^{2}(\Om)$ variational characterization for the principal eigenvalue in our case indeed yields big obstacles, especially, in the study of the limit of $\la_{1}(-L_{\Om,m,\si})$ as $\si\to0^{+}$. This is overcome by the $\sup$-$\inf$ characterizations of the principal eigenvalue as in Theorem \ref{thm-pe-introduction} and an involved analysis of decaying rates in terms of $\si$ of various terms  (see the proof of Theorem \ref{thm-scaling-limit-eigenvalue} in Section \ref{sec-effect-dispersal-rate} for more details). It is also generally understood that  the time-dependence of $f(t,x,s)$ largely complicates the behavior of the principal eigenvalue $\la_{1}(-L_{\Om,m,\si})$ in term of various parameters. We remark that $\si\ll1$ and $\si\gg1$ represent two completely different dispersal strategies. The former says that the dispersal is essentially localized, while the later supports the dispersal over a very long distance. It is interesting to see that the behaviors of the principal eigenvalue are intrinsically different in the cases $m=0$ and $m\in(0,2)$. More precisely, in the case $m\in(0,2)$, both small and large dispersal ranges are favoured provided $\max_{x\in\ol{\Om}}a_{T}(x)>0$. The situation in the case $m=0$ is more complicated. If $\max_{x\in\ol{\Om}}a_{T}(x)\in(0,D)$, small dispersal ranges are favoured and large dispersal ranges are unfavored, while if $\max_{x\in\ol{\Om}}a_{T}(x)>D$, both small and large dispersal ranges are favoured. From this, we can see the involved global dynamics of \eqref{main-eqn} with respect to the dispersal range. 

We further investigate the behaviors of the positive $T$-periodic solution of the following equation
\begin{equation}\label{main-eqn-R}
u_{t}(t,x)=\frac{D}{\sigma^m}\left[\int_{\Om}J_\sigma(x-y)u(t,y)dy-u(t,x)\right]+f(t,x,u(t,x)),\quad t\in\R,\quad x\in\ol{\Om}
\end{equation}
in favoured cases and prove, in particular, the following theorem.

\begin{thmx}[Scaling limits of the positive $T$-periodic solution]\label{thm-scaling-limits-introduction}
Suppose {\bf(H1)}, {\bf(H2)} and \eqref{newcond1}. 

\begin{enumerate}
\item For  each $m\in[0,2)$, the following statements hold.
\begin{enumerate}

\item If $\max_{x\in\ol{\Om}}a_{T}(x)>0$, then there exist $0<\si_{1}\ll1$ such that for each $\si\in(0,\si_{1})$, the equation \eqref{main-eqn-R} has a unique positive $T$-periodic solution $u_{\si}^{*}$ that is globally asymptotically stable.

\item If $\min_{x\in\ol{\Om}}a_{T}(x)>0$, there holds the limit
$$
\lim_{\si\to0^{+}}u^{*}_{\si}(t,x)=v^{*}(t,x)\quad\text{uniformly in}\quad(t,x)\in\R\times\ol{\Om},
$$
where $v^*(t,x)$ is the unique positive and $T$-periodic solution of the equation $v_{t}=f(t,x,v)$ for every $x\in\ol{\Om}$.

\end{enumerate}

\item For  each $m>0$, the following statements hold.
\begin{enumerate}

\item If $\max_{x\in\ol{\Om}}a_{T}(x)>0$, then there exist $1\ll\si_{2}<\infty$ such that for each $\si>\si_{2}$, the equation \eqref{main-eqn-R} has a unique positive $T$-periodic solution $u_{\si}^{*}$ that is globally asymptotically stable.

\item If $\min_{x\in\ol{\Om}}a_{T}(x)>0$, there holds the limit
$$
\lim_{\si\to\infty}u^{*}_{\si}(t,x)=v^{*}(t,x)\quad\text{uniformly in}\quad(t,x)\in\R\times\ol{\Om}.
$$
\end{enumerate}

\end{enumerate}
\end{thmx}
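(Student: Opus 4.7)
Parts (1)(a) and (2)(a) I treat as immediate corollaries of Theorem~\ref{thm-scaling-limit-eigenvalue} and Theorem~\ref{thm-persistence-criterion-b}: the hypothesis $\max_{\ol\Omega}a_{T}>0$ combined with Theorem~\ref{thm-scaling-limit-eigenvalue} gives $\lambda_{1}(-L_{\Omega,m,\sigma})\to-\max_{\ol\Omega}a_{T}<0$ in both regimes (small $\sigma$ for every $m\in[0,2)$, large $\sigma$ only for $m>0$), so there exist thresholds $\sigma_{1}\ll 1$ and $\sigma_{2}\gg 1$ with $\lambda_{1}<0$; Theorem~\ref{thm-persistence-criterion-b} then delivers the unique globally attractive positive $T$-periodic solution $u^{*}_{\sigma}\in\mathcal{X}_{\Omega}^{++}$ on each side.

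For the convergence statements (1)(b) and (2)(b) the candidate limit $v^{*}(t,x)$ is the unique positive $T$-periodic solution of the $x$-parametrised logistic ODE $v_{t}=f(t,x,v)$; under $\min_{\ol\Omega}a_{T}>0$ the KPP hypotheses (H2)(3)--(4) give existence, uniqueness, and a uniform two-sided bound $0<c_{0}\leq v^{*}\leq C_{0}$, and smooth parameter dependence (from (H2)(1)) yields $v^{*}\in C^{1,1}(\R\times\ol\Omega)$, the regularity I will need for the dispersal-defect estimate.

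The core step is a sandwich via multiplicative competitors. For $\delta\in(0,1)$ small I test $\bar u=(1+\delta)v^{*}$ and $\underline u=(1-\delta)v^{*}$. Plugging $\bar u$ into the equation and using $\bar u_{t}=(1+\delta)f(t,x,v^{*})$, the supersolution inequality reduces to
\[
(1+\delta)f(t,x,v^{*})-f\bigl(t,x,(1+\delta)v^{*}\bigr)\ \geq\ (1+\delta)\,\frac{D}{\sigma^{m}}\bigl[(J_{\sigma}*v^{*})(t,x)-v^{*}(t,x)\bigr],
\]
with a sign-flipped analogue for $\underline u$. The KPP monotonicity together with $v^{*}\geq c_{0}$ makes the left-hand ``algebraic gap'' a strictly positive function of $(t,x)$, bounded below by a positive constant times $\delta$ on $\R\times\ol\Omega$. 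The right-hand ``analytic defect'' I estimate by Taylor expanding $v^{*}(t,\cdot)$ to second order and using the symmetry of $J_{\sigma}$: in the interior it is $O(D\sigma^{2-m})\to 0$ as $\sigma\to 0^{+}$ since $m<2$, while in the large-$\sigma$ regime with $m>0$ the bracket is uniformly bounded by $2\|v^{*}\|_{\infty}$ and $D/\sigma^{m}\to 0$, so again the defect vanishes. Hence for $\sigma$ small (resp.\ large) $\bar u$ is a genuine supersolution and $\underline u$ a genuine subsolution; combining the nonlocal parabolic comparison principle established later in the paper with the global attractivity from Theorem~\ref{thm-persistence-criterion-b} sandwiches $u^{*}_{\sigma}$ between $\underline u$ and $\bar u$, and letting $\delta\to 0^{+}$ yields the claimed uniform convergence.

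The hard part lives in a $\gamma\sigma$-wide boundary strip, where the integration against $J_{\sigma}$ over $\Omega$ ceases to be symmetric and the cancellation producing $O(\sigma^{2-m})$ is destroyed; only an $O(\sigma^{1-m})$ rate survives, which fails to vanish when $m\in[1,2)$ in the small-$\sigma$ case. To close the argument I would upgrade the bare multiplicative competitor to $(1\pm\delta)v^{*}\pm\eta\,\psi_{\sigma}$ with $\psi_{\sigma}\geq 0$ supported in the boundary strip, engineered so that the extra nonpositive contribution it injects into the nonlocal operator through $\int_{\Omega}J_{\sigma}(\cdot-y)\,dy-1<0$ near $\partial\Omega$ exactly absorbs the uncancelled boundary term; this boundary-corrector construction is where the genuinely new technical work will sit.
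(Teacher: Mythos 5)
For parts (1)(a) and (2)(a) you take the paper's route exactly (Theorem~\ref{thm-scaling-limit-eigenvalue} pushes $\lambda_1(-L_{\Omega,m,\sigma})$ below zero in the relevant $\sigma$-regime, and Theorem~\ref{thm-persistence-criterion-b} supplies the unique globally attracting $u^*_\sigma$), so there is nothing to add there.

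For (1)(b) and (2)(b) your scheme --- multiplicative competitors $(1\pm\delta)v^*$ together with a sliding comparison against $u^*_\sigma$ --- is precisely the scheme the paper uses in Theorem~\ref{thm-scaling-limit-positive-sol}(2), and the boundary-strip objection you raise at the end is genuine. What you may not realize is that the paper's own proof makes the same leap: it simply asserts that
\begin{equation*}
\frac{1-\delta}{\sigma^m}\left[\int_\Omega J_\sigma(x-y)v^*(t,y)\,dy-v^*(t,x)\right]\longrightarrow 0\quad\text{uniformly in }(t,x)\in\R\times\ol\Omega\ \text{ as }\ \sigma\to0^+,
\end{equation*}
and this is false within distance $\gamma\sigma$ of $\partial\Omega$. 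However, your quantitative estimate of the damage is too optimistic. Split the bracket as
\begin{equation*}
\int_\Omega J_\sigma(x-y)\bigl[v^*(t,y)-v^*(t,x)\bigr]\,dy\;-\;v^*(t,x)\int_{\R^N\setminus\Omega}J_\sigma(x-y)\,dy .
\end{equation*}
The first piece is indeed $O(\sigma)$ because $v^*$ is Lipschitz in $x$, but the second piece is $O(1)$, not $O(\sigma)$: the escaping kernel mass $\int_{\R^N\setminus\Omega}J_\sigma(x-y)\,dy$ is bounded away from zero for $x$ near $\partial\Omega$, and --- crucially --- $\min_{\ol\Omega}a_T>0$ forces $v^*\geq c_0>0$ all the way up to $\partial\Omega$, so there is no boundary decay of $v^*$ to rescue you. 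Your $O(\sigma^{1-m})$ figure implicitly assumes $v^*|_{\partial\Omega}=0$, which fails here. The true boundary defect after dividing by $\sigma^m$ is of order $\sigma^{-m}$, so the obstruction is present for \emph{every} $m\in[0,2)$, including $m=0$, not only for $m\in[1,2)$.

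Two further correctives to your proposed fix. First, the boundary defect is one-signed (nonpositive, since mass leaks out of $\Omega$); consequently $(1+\delta)v^*$ is a valid supersolution near $\partial\Omega$ with no corrector at all --- the leakage \emph{helps} the supersolution inequality --- and only the subsolution $(1-\delta)v^*$ is endangered. The paper's one-line claim that the upper bound ``follows from similar arguments'' is therefore sound; your symmetric ansatz $(1\pm\delta)v^*\pm\eta\psi_\sigma$ is over-engineered on the supersolution side. Second, the mechanism you identify for the corrector is the right one --- the nonlocal operator applied to $-\eta\psi_\sigma$ produces a positive contribution near $\partial\Omega$ through $1-\int_\Omega J_\sigma(\cdot-y)\,dy>0$ --- but to actually close the argument you must keep $(1-\delta)v^*-\eta\psi_\sigma$ strictly positive on $\ol\Omega$, dominate the $\partial_t\psi_\sigma$ and $f$-linearization terms it creates by the algebraic gap, and show $\psi_\sigma$ can absorb a defect of true size $\sigma^{-m}$, which is substantially larger than you budgeted for. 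None of that is worked out, so your proposal, like the published proof, leaves a real gap at the boundary; closing it requires a genuine boundary-layer construction (or an alternative route, e.g.\ a comparison with the principal eigenfunction of $-L_{\Omega,m,\sigma}$ that tracks the boundary behaviour), not a routine verification.
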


\begin{description}
\item[Open problem 2] Assume that $\max_{x\in\O} a_T(x)\in(0,D)$ in the case of Theorem \ref{thm-scaling-limit-eigenvalue}(3). Can one prove the limit
$$
\lim_{\si\to\sigma_*^{-}}u^{*}_{\si}(t,x)=0,\quad\forall (t,x)\in\R\times\ol{\Om},
$$ 
where $\sigma_*$ is given in Remark \ref{threshold} and what happens if $\max_{x\in\O} a_T(x)=D$?
\end{description}

Results as in Theorem \ref{thm-scaling-limits-introduction} have been obtained by Berestycki, Coville and Vo in \cite{BCV2} in the case $f(t,x,s)=f(x,s)$. More precisely, it was proven in  \cite{BCV2} that for $f(x,s)=a(x)s-s^2$ and $m\in[0,2)$, the unique positive solution of the following equation
$$
\frac{1}{\si^{m}}\left[ \int_{\Om}J_\sigma(x-y)u(y)dy-u(x)\right]+a(x) u(x)-u(x)^2=0,\quad x\in\O
$$
converges, as $\sigma\to0^{+}$, to the nonnegative solution of 
$$
u(x)[a(x)-u(x)]=0,\quad x\in\O,
$$
which may \textit{not be  unique} due to the lack of regularity. Very recently, Shen and Xie studied in \cite{ShXi15-2} the case with $m=2$  and $\si\to0^{+}$ and proved that the principal eigenvalue and the positive $T$-periodic solution converge to that of the corresponding reaction-diffusion equation
\begin{equation}\label{main-eqn-rd}
\begin{cases}
u_{t}(t,x)=d\De u(t,x)+f(t,x,u(t,x)),& x\in\Om,\\
u(t,x)=0,& x\in\partial\Om
\end{cases}
\end{equation}
for some appropriate $d>0$. 



Finally, we  establish a maximum principle for the operator $L_{\Om}$ defined in \eqref{main-eqn-linear}, which is  of fundamental importance and independent interest.

\begin{definition}[Maximum principle] \label{defn-mp}
We say that $L_\O$ admits the {\rm maximum principle} if for any function $u\in C^{1,0}([0,T]\times\ol\O)$ satisfying
 \begin{equation}\label{mp-assumption}
 \begin{cases} 
L_\O[u]\leq 0 & \text{in $(0,T]\times\O$},\\
 u\geq0 & \text{on $(0,T]\times\partial\O$},\\
u(0,\cdot)\geq u(T,\cdot) & \text{in $\Om$},
 \end{cases}
\end{equation}
there must hold $u>0$ in $[0,T]\times\O$ unless $u\equiv0$ in $[0,T]\times\O$.
\end{definition}

\begin{thmx}[Maximum principle]\label{thm-mp-introduction}
Suppose {\bf(H1)}, {\bf(H2)} and \eqref{newcond1}. Then, $L_{\Om}$ admits the maximum principle  if and only if $\lambda_1(-L_\O)\geq0$.
\end{thmx}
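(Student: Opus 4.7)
The plan is to establish each implication separately; in both I would invoke Theorem~A (available under \eqref{newcond1}) to supply the principal eigenfunction $\phi\in\XX_\Om^{++}$ satisfying $L_\Om\phi=-\la_1\phi$, with $\la_1=\la_1(-L_\Om)$.

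For the sufficiency direction $\la_1\geq 0\Rightarrow\mathrm{MP}$, I would take $u$ satisfying the three MP hypotheses, substitute $v:=u/\phi$, and exploit the eigenvalue equation to derive the pointwise identity
\[
L_\Om u = -\phi v_t - \la_1\phi v + D\int_\Om J(x-y)\phi(t,y)\bigl[v(t,y)-v(t,x)\bigr]\,dy.
\]
Argue by contradiction: suppose $v$ attains a negative minimum $-m<0$ at $(t_0,x_0)$. The hypothesis $u\geq 0$ on $\partial\Om$ forces $v\geq 0$ on $\partial\Om$, so $x_0\in\Om$; while the hypothesis $u(0,\cdot)\geq u(T,\cdot)$ together with the $T$-periodicity of $\phi$ yields $v(0,\cdot)\geq v(T,\cdot)$, which reduces matters to $t_0\in(0,T]$ and gives $v_t(t_0,x_0)\leq 0$. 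At $(t_0,x_0)$ each of the three terms in the identity is nonnegative (using $\la_1\geq 0$ and $v(t_0,\cdot)\geq -m$), so by $L_\Om u\leq 0$ they must all vanish. The vanishing integral together with $J(0)>0$ and continuity of $J$ forces $v(t_0,\cdot)\equiv -m$ on $B_\gamma(x_0)\cap\Om$; iterating via connectedness of $\Om$ extends this equality to $\ol\Om$ at time $t_0$, contradicting $v\geq 0$ on $\partial\Om$. The strict form ($u>0$ unless $u\equiv 0$) follows by an analogous propagation argument applied to any interior zero of $v$.

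For the necessity direction $\mathrm{MP}\Rightarrow\la_1\geq 0$, I argue by contradiction. If $\la_1<0$, then $L_\Om\phi=-\la_1\phi>0$, and the plan is to produce a $u\in C^{1,0}([0,T]\times\ol\Om)$ satisfying the three MP hypotheses but with $u<0$ at an interior point. Set
\[
u(t,x):=-\phi(t,x)\,\chi(x),
\]
where $\chi\in C(\ol\Om)$ is a $t$-independent cutoff with $0\leq\chi\leq 1$, $\chi\equiv 0$ on $\partial\Om$, and $\chi\equiv 1$ on a compact $K\subset\subset\Om$, connected by a transition band of width $\de>0$ to be chosen. Then $u=0$ on $\partial\Om$, $u(0,\cdot)=u(T,\cdot)$ by $T$-periodicity of $\phi$, and $u=-\phi<0$ on $K$. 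The product-rule computation gives
\[
L_\Om u = \la_1\phi\chi - D\int_\Om J(x-y)\phi(t,y)\bigl[\chi(y)-\chi(x)\bigr]\,dy,
\]
which is $\leq 0$ on $\{\chi=0\}$ (the first term vanishes and the integral is $\geq 0$) and equals $\la_1\phi<0$ on the deep interior of $K$ (where the integral vanishes by the finite range of $J$ from~(H1)); in the transition zone, the Lipschitz bound $|\chi(y)-\chi(x)|\leq\gamma/\de$ and the sup/inf bounds on $\phi$ allow one to choose $\de$ large enough (for instance $\de\gtrsim D\gamma\max\phi/(|\la_1|\min\phi)$) so that the nonlocal term is dominated by $|\la_1|\phi\chi$.

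The hardest step will be the quantitative transition-zone analysis in the necessity direction, namely maintaining a uniform balance between the potential contribution $\la_1\phi\chi$ and the nonlocal diffusion contribution through an explicit choice of $\de$; existence of such a $\de$ compatible with $K\subset\subset\Om$ relies on $\Om$ being large enough, consistent with the fact that $\la_1<0$ is itself a largeness condition on $\Om$ (as will follow from Theorem~\ref{thm-effect-dispersal-rate-intro}). In the sufficiency direction, the main subtlety is handling the time endpoints $\{0,T\}$ by using the periodicity of $\phi$ together with $u(0,\cdot)\geq u(T,\cdot)$ to reduce to $t_0\in(0,T]$, after which the nonlocal strong minimum principle and connectedness close the argument.
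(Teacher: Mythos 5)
Your sufficiency direction is correct and is essentially the paper's argument (set $w=u/\phi$, derive the three-term identity, evaluate at a nonpositive minimum of $w$); in fact you are more careful than the paper, which merely asserts strict positivity of the nonlocal term at the minimum, whereas you carry out the propagation via $J(0)>0$ and connectedness and then close the argument with the boundary data. (One small imprecision: the vanishing of the integral at $(t_0,x_0)$ forces $v(t_0,\cdot)\equiv -m$ only on $B_{\de_*}(x_0)\cap\Om$ where $J\geq c_*>0$, not on all of $B_\gamma(x_0)$; this does not affect the iteration.)

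The necessity direction, however, has a genuine gap. Your strategy is to take $\chi$ Lipschitz with small slope $\gamma/\de$ (a \emph{thick} transition band) and hope to absorb the nonlocal term into $|\la_1|\phi\chi$. But $|\la_1|\phi(t,x)\chi(x)\to 0$ as $\chi(x)\to 0^+$ while the Lipschitz bound on the nonlocal term gives only the fixed quantity $D\gamma\|\phi\|_\infty/\de$; hence for $x$ with $0<\chi(x)<D\gamma\|\phi\|_\infty/(\de|\la_1|\inf\phi)$ (a nonempty set near $\partial\Om$) the estimate
\[
L_\Om[-\phi\chi](t,x)\;\leq\;\la_1\phi(t,x)\chi(x)+\frac{D\gamma\|\phi\|_\infty}{\de}
\]
does not give nonpositivity. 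The alternative crude bound $\int_\Om J\phi[\chi(y)-\chi(x)]\,dy\geq-\chi(x)\|\phi\|_\infty$ gives $L_\Om[-\phi\chi]\leq\chi(x)\bigl[\la_1\phi(t,x)+D\|\phi\|_\infty\bigr]$, which is nonpositive only when $\la_1\leq -D\|\phi\|_\infty/\inf\phi\leq -D$, much stronger than $\la_1<0$. A second, independent difficulty is the requirement $\de\gtrsim D\gamma\sup\phi/(|\la_1|\inf\phi)$ together with $K\subset\subset\Om$; you appeal to ``$\la_1<0$ being a largeness condition on $\Om$'' to ensure compatibility, but nothing in the paper (in particular not Theorem~\ref{thm-effect-dispersal-rate-intro}, which concerns $D$) supports that, and $\la_1$ also depends on $a$, $D$, $J$.

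The paper's construction is the opposite of yours and sidesteps both problems: it takes a \emph{thin} shell $\Om\setminus\Om_0$ with $|\Om\setminus\Om_0|$ small rather than a thick band with small slope, and argues by a three-case analysis on $\eta\phi$. When $\eta(x)\geq 1/2$ the term $-\la_1\eta\phi\geq -\la_1\inf\phi/2>0$ absorbs the small negative contribution $-D\|J\|_\infty\|\phi\|_\infty|\Om\setminus\Om_0|$; and crucially, when $\eta(x)\leq 1/2$ one uses the jump $\eta(y)-\eta(x)\geq 1/2$ for $y\in\Om_0$, which yields a positive contribution $\frac{D}{2}\inf_{x\in\Om\setminus\Om_0}\int_{\Om_0}J(x-y)\phi\,dy$ that stays bounded away from $0$ as $\Om_0\uparrow\Om$ (since $J>0$ near $0$ and $\Om_0$ is within range $\gamma$ of every point of the thin shell). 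This is precisely the positive contribution that is absent in your thick-band construction, where points with small $\chi(x)$ are at distance $\gg\gamma$ from $\{\chi=1\}$. If you want to repair your proof, you should replace the Lipschitz/slope mechanism by this thin-shell, measure-smallness mechanism.
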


For the proof of this result, we only need \eqref{newcond1} so that $\la_{1}(-L_{\Om})$ is the principal eigenvalue of $-L_{\Om}$, therefore, it is independent of other results obtained in the present paper. A similar result in the case $f(t,x,s)=f(x,s)$ has been obtained by Coville in \cite{Cov10}.  We point out that, for elliptic and parabolic operators, the maximum principle holds if and only if the principal eigenvalue is positive (see e.g. \cite{BNV94,PZ15}).

To this end, let us mention that the study of \eqref{main-eqn} serves as the first step to the understanding of the global dynamics of the following competition system, which is of great biological and mathematical interest,  proposed in \cite{HMMV03}, 
\begin{equation}\label{com-system-intro}
\begin{cases}
u_{t}(t,x)=\frac{D_{1}}{\sigma_{1}^m}\left[\int_{\Om}J_{\si_{1}}(x-y)u(t,y)dy-u(t,x)\right]+u(t,x)(a(t,x)-u(t,x)-v(t,x)),\quad x\in\ol{\Om},\\
v_{t}(t,x)=\frac{D_{2}}{\sigma_{2}^m}\left[\int_{\Om}J_{\sigma_{2}}(x-y)v(t,y)dy-v(t,x)\right]+v(t,x)(a(t,x)-u(t,x)-v(t,x)),\quad x\in\ol{\Om}.
\end{cases}
\end{equation}
In fact, the investigation of the global dynamics of \eqref{com-system-intro}, which is one of big challenges in the study of reaction-diffusion equations, relies on the detailed stability analysis of semi-trivial states mainly coming from the investigation of \eqref{main-eqn}. We refer the reader to \cite{HMP01,HN} for the treatment of a similar parabolic competition system.

\textbf{Organization of the paper.} The paper is organized as follows. In Section \ref{sec-pe-vc}, we study the existence of the principal eigenvalue of $-L_{\Om}$ as well as its characterizations. In particular, we prove Theorem \ref{thm-pe-introduction}. In Section \ref{sec-bifurcation-global-dynamics}, we study the bifurcation of non-negative $T$-periodic solutions of \eqref{main-eqn-Dirichlet-simple} when the principal eigenvalue $\la_{1}(-L_{\Om})$ crosses $0$, as well as the global dynamics of \eqref{main-eqn-Dirichlet-simple1}. In particular, Theorem \ref{thm-persistence-criterion-b} is proven. In Section \ref{sec-effect-dispersal-rate}, we study the effects of the dispersal rate $D$ on the principal eigenvalue $\la_{1}(-L_{\Om})$ and the positive $T$-periodic solution, and prove Theorem \ref{thm-effect-dispersal-rate-intro}. In Section \ref{sec-scaling-limits}, we study the effects of the dispersal range characterized by $\si$ on the principal eigenvalue and the positive $T$-periodic solution associated to the \eqref{main-eqn} and \eqref{main-eqn-R}. In particular, we prove Theorem \ref{thm-scaling-limits-introduction}. The last section, Section \ref{sec-mp}, is devoted to the proof of the maximum principle stated in Theorem \ref{thm-mp-introduction}.


\section{Principal eigenvalue and sup-inf characterizations}\label{sec-pe-vc}

In this section, we investigate the principal spectral theory of the operator $L_{\Om}$ defined in \eqref{main-eqn-linear} and prove Theorem \ref{thm-pe-introduction}. Let us start with the definition of principal spectrum point and principal eigenvalue. Recall that the spaces $\XX_{\Om}$, $\XX_{\Om}^{+}$ and $\XX_{\Om}^{++}$ are defined in \eqref{T-periodic-spaces}.



\begin{definition}\label{defn-principal} 
The {\rm principal spectrum point} of $-L_{\O}$ is defined by
\begin{equation*}
\la_{1}(-L_{\Om})=\inf\{\Re\la|\la\in\si(-L_{\Om})\},
\end{equation*}
where $\si(-L_{\Om})$ is the spectrum of $-L_{\Om}$. If $\la_{1}(-L_{\Om})$ is an isolated eigenvalue of $-L_{\Om}$ with an eigenfunction in $\XX_{\Om}^{+}$, then it is called the {\rm principal eigenvalue} of $-L_{\Om}$.  
\end{definition}

It is known (see e.g. \cite{Cov10,ShZh10}) that, due to the nonlocality, neither the operator \eqref{main-eqn-linear} nor its resolvent is compact, and therefore, the Krein-Rutmann theory cannot be applied to derive the existence of the principal eigenvalue. As a matter of fact, $-L_{\Om}$ does not admit a principal eigenvalue in general. It is then of vital significance to know when $\la_{1}(-L_{\Om})$ is indeed the principal eigenvalue of $-L_{\Om}$. For nonlocal parabolic-type operator, the first result was obtained by Rawal and Shen in \cite{RS12} (see Theorem \ref{prop-principal-e}). The authors of \cite{RS12} actually also proved that any eigenvalue $\la\in\R$ of $-L_{\Om}$ having an eigenfunction in $\XX_{\Om}^{+}$ coincides with $\la_{1}(-L_{\Om})$, and therefore, must be the principal eigenvalue. Theorem \ref{prop-principal-e} gives a necessary and sufficient spectral condition to determine whether $\la_{1}(-L_{\Om})$ is the principal eigenvalue of $-L_\O$. However, this spectral condition is hard to verify in general as $\la_{1}(-L_{\Om})$ is not computable and can be barely estimated in general. Although some sufficient conditions based on this spectral condition have been obtained in \cite[Theorem B]{RS12}, they are more or less restricted. Therefore, it is eager to find a more verifiable condition for $\la_{1}(-L_{\Om})$ becoming the principal eigenvalue of $-L_{\Om}$. Here, we provide a sufficient condition that only requires mild smoothness of $a_{T}(x)$ near its maximum points. 

\begin{theorem}\label{thm-new-sufficient-cond-b}
Suppose {\bf(H1)} and {\bf(H2)}. If \eqref{newcond1} holds, then $\la_{1}(-L_{\Om})<\la_{*}$. In particular, $\la_{1}(-L_{\Om})$ is the principal eigenvalue of $-L_{\Om}$.
\end{theorem}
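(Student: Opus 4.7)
Plan: The strategy is to reduce the time-periodic spectral problem to an auxiliary time-independent nonlocal elliptic spectral problem via a gauge transformation, and then invoke Coville's classical existence theorem~\cite{Cov10}. Set
\[
G(t,x):=\int_0^t\big[a(s,x)-a_T(x)\big]\,ds,
\]
which is $C^1$ in $t$, $T$-periodic in $t$ (by the very definition of $a_T$), continuous in $x$, and bounded on $\R\times\ol\Om$. Let $m:=\min_{(t,x,y)\in\R\times\ol\Om\times\ol\Om}e^{G(t,y)-G(t,x)}>0$. For any strictly positive $\psi\in C(\ol\Om)$, the function $\phi(t,x):=\psi(x)\,e^{G(t,x)}$ lies in $\XX_\Om^{++}$ and, using $\phi_t=(a-a_T)\phi$, direct computation gives
\[
(L_\Om+\la)\phi(t,x)=\Big[D\!\int_\Om\! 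J(x-y)\psi(y)e^{G(t,y)-G(t,x)}\,dy+(a_T(x)+\la-D)\psi(x)\Big]e^{G(t,x)}.
\]
Bounding the exponential factor inside the integral from below by $m$, a sufficient condition for $(L_\Om+\la)\phi\geq0$ is the time-independent elliptic inequality $(N+\la)\psi\geq0$ in $\ol\Om$, where $N\psi:=Dm(K\psi-\psi)+\tilde a\,\psi$ with $\tilde a:=a_T+Dm-D$ and $K\psi(x):=\int_\Om J(x-y)\psi(y)dy$.

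The operator $N$ has the standard form of a nonlocal elliptic operator with effective diffusion rate $Dm$ and coefficient $\tilde a$, so its essential threshold equals $Dm-\max_{\ol\Om}\tilde a=D-\max_{\ol\Om}a_T=\la_*$ (since $\tilde a$ differs from $a_T$ only by the constant $Dm-D$). Moreover $\max_{\ol\Om}\tilde a-\tilde a\equiv\max_{\ol\Om}a_T-a_T$ pointwise, so hypothesis~\eqref{newcond1} is exactly the local integrability condition required by Coville~\cite{Cov10} for the operator $-N$. Hence $-N$ admits a principal eigenvalue $\la_1(-N)<\la_*$ with strictly positive eigenfunction $\psi_1$; substituting $\psi=\psi_1$ and $\la=\la_1(-N)$ into the display above (together with the identity $Dm\,K\psi_1=(D-a_T-\la_1(-N))\psi_1$ from the elliptic eigenvalue equation) yields $\phi:=\psi_1 e^G\in\XX_\Om^{++}$ with $(L_\Om+\la_1(-N))\phi\geq0$ in $\R\times\ol\Om$.

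It remains to turn this super-solution into an upper bound on $\la_1(-L_\Om)$. The inequality $(L_\Om+\la_1(-N))\phi\geq0$ makes $\phi$ a sub-solution of the linear evolution $v_t=D(Kv-v)+(a+\la_1(-N))v$. The nonlocal parabolic comparison principle (as in \cite{RS12}) combined with the rescaling $u=e^{-\la_1(-N)t}v$ gives $P\phi_0\geq e^{-\la_1(-N)T}\phi_0$, where $\phi_0:=\phi(0,\cdot)>0$ and $P$ denotes the Poincaré map of $u_t=D(Ku-u)+au$. Since the spectral radius of the positive operator $P$ equals $e^{-T\la_1(-L_\Om)}$, the Collatz--Wielandt characterization yields $\la_1(-L_\Om)\leq\la_1(-N)<\la_*$. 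The second assertion then follows immediately from Theorem~\ref{prop-principal-e}.

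The main technical point is the invariance $\max\tilde a-\tilde a=\max a_T-a_T$ combined with the equality $Dm-\max\tilde a=\la_*$: together these encode the fact that the gauge transformation $\phi=\psi e^G$ effectively replaces the time-periodic coefficient $a$ by its time average $a_T$, distorting only the diffusion rate by the factor $m$. This is what decouples the local integrability assumption~\eqref{newcond1} from the time-dependence of $a$ and permits a clean reduction to the elliptic result~\cite{Cov10}; verifying the sign of the remainder after applying the bound $e^{G(t,y)-G(t,x)}\geq m$ is the one place where one must be careful with constants.
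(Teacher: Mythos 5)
Your proof is correct, but it takes a genuinely different route from the paper. The paper proves the contrapositive directly within the resolvent framework of Rawal--Shen: writing $L_\Om = H_\Om + K_\Om$, it assumes $\la_1(-L_\Om)\geq\la_*$, so that $r(K_\Om(\al-H_\Om)^{-1})\leq 1$ for all $\al>-\la_*$, then uses the explicit variation-of-constants formula for $(\al-H_\Om)^{-1}$, iterates the operator $n$ times, and applies Gelfand's formula to show that the non-integrability condition \eqref{newcond1} forces the spectral radius above $1$ near the threshold --- a contradiction. You instead gauge-transform with $\phi(t,x)=\psi(x)e^{G(t,x)}$, $G=\int_0^t(a-a_T)$, and observe that after bounding $e^{G(t,y)-G(t,x)}\geq m>0$ the test-function inequality $(L_\Om+\la)[\phi]\geq0$ is implied by a time-independent inequality $(N+\la)\psi\geq0$ for the nonlocal elliptic operator $N\psi = Dm(K\psi-\psi)+(a_T+Dm-D)\psi$, whose essential threshold and local-integrability condition both coincide with those of the original problem; Coville's elliptic theorem then produces the eigenpair, and a Poincar\'e-map/Collatz--Wielandt argument converts the super-solution $(L_\Om+\la_1(-N))\phi\geq0$ into the bound $r(\Phi(T,0))\geq e^{-T\la_1(-N)}$, hence $\la_1(-L_\Om)\leq\la_1(-N)<\la_*$.

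Both arguments are sound. Your approach is modular and conceptually transparent: it exposes the gauge invariance $\max\tilde a-\tilde a=\max a_T-a_T$ and $Dm-\max\tilde a=\la_*$ that explains \emph{why} the purely spatial condition \eqref{newcond1} is sufficient in the time-periodic setting, and it reuses the elliptic theory of \cite{Cov10} rather than re-deriving it. The price is that you must (i) import Coville's existence theorem as a black box, and (ii) add the separate Poincar\'e-map step to transfer the conclusion back to the time-periodic principal spectrum point, since the sup--inf characterization $\la_1=\la_p'$ of Theorem~\ref{thm-principal-vs-generalized} is itself a consequence of the result being proved and cannot be invoked here (you correctly avoid this circularity by appealing directly to $r(\Phi(T,0))=e^{-T\la_1(-L_\Om)}$ from \cite[Prop.~3.10]{RS12}). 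The paper's proof is more self-contained but less structurally illuminating. One small point to nail down when writing this up: Coville's kernel hypotheses should be checked for $DmJ(x-y)$ rather than $J(x-y)$; since $Dm$ is a positive constant the hypotheses are clearly preserved, but the eigenvalue $\mu$ of the normalized problem $K\psi+\frac{a_T-D}{Dm}\psi+\mu\psi=0$ relates to $\la_1(-N)$ by $\la_1(-N)=Dm\,\mu$, and it is worth displaying this scaling explicitly so that the threshold identity $\la_1(-N)<\la_*$ is not left implicit.
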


We remark that \eqref{newcond1} is independent of the dispersal kernel $J$, and hence, the condition \eqref{newcond1} is very useful later when we study the equation \eqref{main-eqn} with scaled kernels. Theorem \ref{thm-new-sufficient-cond-b} is the first part of Theorem \ref{thm-pe-introduction}.

Before proving Theorem \ref{thm-new-sufficient-cond-b}, let us write $L_{\Om}=H_{\Om}+K_{\Om}$, where
\begin{equation*}
\begin{split}
H_{\Om}[v](t,x)&=-v_{t}(t,x)-Dv(t,x)+a(t,x)v(t,x),\quad (t,x)\in\R\times\ol{\Om}\\
K_{\Om}[v](t,x)&=D\int_{\Om}J(x-y)v(t,y)dy,\quad (t,x)\in\R\times\ol{\Om}
\end{split}
\end{equation*}
and, recall the following results from \cite{RS12}.

\begin{proposition}\label{prop-rs12-b}
Suppose {\bf(H1)} and {\bf(H2)}.
\begin{enumerate}
\item For any $\al>-\la_{*}$, the inverse $(\al-H_{\Om})^{-1}$ exists. Moreover, there exists $M>0$ such that the estimate
\begin{equation*}
\big((\al-H_{\Om})^{-1}v\big)(t,x)\geq\frac{M}{\al-(-D+a_{T}(x))}v(x),\quad (t,x)\in\R\times\ol{\Om}
\end{equation*}
holds for any $\al\in(-\la_{*},-\la_{*}+1]$ and any $v\in\XX_{\Om}^{+}$ with $v(t,x)\equiv v(x)$.

\item $\la_{1}(-L_{\Om})<\la_{*}$ if and only if there is $\al_{0}>-\la_{*}$ such that $r(K_{\Om}(\al_{0}-H_{\Om})^{-1})>1$, where $r(K_{\Om}(\al_{0}-H_{\Om})^{-1})$ is the spectral radius of $K_{\Om}(\al-H_{\Om})^{-1}$.
\end{enumerate}
\end{proposition}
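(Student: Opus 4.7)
The plan for Part (1) is to invert $\al - H_\Om$ explicitly. Freezing $x$ as a parameter, the equation $(\al-H_\Om)v = w$ becomes the scalar non-autonomous linear ODE in $t$,
\[
v_t + [\al + D - a(t,x)]v = w,
\]
whose $T$-averaged coefficient $\al + D - a_T(x)$ is, by the hypothesis $\al > -\la_*$, strictly positive and bounded away from zero uniformly in $x\in\ol\Om$. The integrating-factor method then yields the unique $T$-periodic solution
\[
(\al - H_\Om)^{-1}w(t,x) = \int_{-\infty}^t e^{-\int_s^t[\al+D-a(\tau,x)]\,d\tau}\,w(s,x)\,ds,
\]
where the improper integral converges by virtue of the exponential decay induced by $\al + D - a_T(x) > 0$ together with the boundedness of $a$. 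One then checks routinely from this formula that the right-hand side defines a bounded, positivity-preserving map $\XX_\Om \to \XX_\Om$ inverting $\al - H_\Om$.

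For the pointwise estimate, I would specialize to time-independent $w(s,x) = v(x)$ and change variables $u = t-s$. The exponent becomes $-u(\al+D) + \int_{t-u}^t a(\tau,x)\,d\tau$. Writing $u = nT + r$ with $0\leq r < T$ and using the $T$-periodicity $\int_0^T a(\tau,x)\,d\tau = T\,a_T(x)$ gives $\int_{t-u}^t a(\tau,x)\,d\tau = u\,a_T(x) + R$ with $|R|\leq 2T\|a\|_\infty$, uniformly in $t\in\R$, $u\geq 0$ and $x\in\ol\Om$. Evaluating the resulting elementary exponential integral yields
\[
(\al-H_\Om)^{-1}v(t,x) \;\geq\; \frac{e^{-2T\|a\|_\infty}}{\al-(-D+a_T(x))}\,v(x),
\]
so the constant $M := e^{-2T\|a\|_\infty}$, which is independent of $\al$, does the job; the restriction $\al\in(-\la_*,-\la_*+1]$ merely keeps the right-hand side uniformly meaningful for the downstream spectral application.

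For Part (2), the plan is to exploit the algebraic factorization
\[
\la + L_\Om \;=\; -(\al-H_\Om)\bigl[I - (\al-H_\Om)^{-1}K_\Om\bigr],\qquad \al := -\la,
\]
which is legitimate for $\al > -\la_*$ by Part (1). Since the nonzero spectra of $AB$ and $BA$ agree, this yields, for $\la < \la_*$, the equivalence $\la\in\sigma(-L_\Om) \iff 1\in\sigma(T_\al)$ with $T_\al := K_\Om(\al-H_\Om)^{-1}$. Now $T_\al$ is a bounded positive operator on the ordered Banach space $\XX_\Om$, so Bonsall's theorem guarantees $r(T_\al)\in\sigma(T_\al)$ with no compactness hypothesis. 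Continuity of $\al\mapsto r(T_\al)$ is immediate from the resolvent identity, the limit $r(T_\al)\to 0$ as $\al\to\infty$ follows from the operator-norm decay of $(\al-H_\Om)^{-1}$, and strict monotonicity of $\al\mapsto r(T_\al)$ will be extracted from an irreducibility/comparison argument rooted in $J(0)>0$, the connectedness of $\Om$, and the Part~(1) estimate. Granted these ingredients, both directions of the biconditional are routine: if $r(T_{\al_0}) > 1$, the intermediate-value theorem furnishes $\al^* > \al_0$ with $r(T_{\al^*}) = 1 \in \sigma(T_{\al^*})$, hence $\la_1(-L_\Om) \leq -\al^* < -\al_0 < \la_*$; conversely, if $\la_1(-L_\Om) < \la_*$, then the fact that $\la_1(-L_\Om)$ belongs to $\sigma(-L_\Om)$ (a standard consequence of $-L_\Om$ generating a positive $C_0$-semigroup on $\XX_\Om$) gives $r(T_{-\la_1(-L_\Om)}) \geq 1$, so $r(T_{\al_0}) > 1$ for every $\al_0\in(-\la_*,-\la_1(-L_\Om))$ by strict monotonicity.

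The principal obstacle is the strict monotonicity $r(T_{\al_0}) > r(T_{\al^*})$ for $\al_0 < \al^*$, because $T_\al$ is not compact and the standard Krein--Rutman comparison arguments do not apply directly. I would address this by using the Part~(1) estimate to produce, for each such pair, a nontrivial strictly positive lower bound on the operator difference $T_{\al_0}-T_{\al^*}$ on an appropriate positive cone, and then invoking an irreducibility argument based on $J(0)>0$ and the connectedness of $\Om$ to promote this pointwise strict positivity to a strict inequality at the spectral-radius level. The auxiliary claim $\la_1(-L_\Om)\in\sigma(-L_\Om)$ is a secondary technical point handled via positive-semigroup theory for $-L_\Om$ on $\XX_\Om$.
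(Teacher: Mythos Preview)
The paper does not actually prove this proposition; it simply cites \cite[Propositions 3.5 and 3.7]{RS12}. So there is no in-paper argument to compare against, and your proposal is necessarily more detailed than what the paper offers.

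Your Part (1) is correct and matches the explicit variation-of-constants formula the paper itself quotes two paragraphs later. The bound $|R|\leq 2T\|a\|_\infty$ and the resulting constant $M=e^{-2T\|a\|_\infty}$ are fine.

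Your Part (2) has the right architecture, but two steps are genuine gaps rather than routine verifications. First, the claim that continuity of $\al\mapsto r(T_\al)$ is ``immediate from the resolvent identity'' is not justified: for a norm-continuous family of bounded operators the spectral radius is only upper semicontinuous in general, and lower semicontinuity can fail. Since $T_\al$ is not compact here, you cannot fall back on isolated-eigenvalue perturbation theory, and the intermediate-value step in your backward direction is not secured. Second, you correctly flag strict monotonicity as the principal obstacle, but the proposed fix is too vague to be a proof. Irreducibility gives you a pointwise strict inequality $T_{\al_0}v > T_{\al^*}v$ for $v>0$, but promoting that to $r(T_{\al_0})>r(T_{\al^*})$ without compactness or without already knowing that $r(T_{\al^*})$ is an eigenvalue with a strictly positive eigenfunction is exactly the hard part; the standard comparison theorems for positive operators need one of those hypotheses. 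Note also that the resolvent identity gives $T_{\al_0}-T_{\al^*}=(\al^*-\al_0)K_\Om(\al_0-H_\Om)^{-1}(\al^*-H_\Om)^{-1}\geq 0$, but this is additive rather than multiplicative, so it does not yield $T_{\al_0}\geq(1+\ep)T_{\al^*}$.

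To close these gaps you would either need to import the specific machinery from \cite{RS12}, which (as the paper hints via the formula $-\la_1=\frac{1}{T}\ln r(\Phi(T,0))$ used in the proof of Theorem \ref{thm-persistence-criterion-b}) works through the time-$T$ map of the evolution family rather than purely through the $T_\al$ family, or else supply a direct argument exploiting the explicit kernel of $K_\Om(\al-H_\Om)^{-1}$ more carefully.
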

It is referred to \cite[Proposition 3.5 and Proposition 3.7]{RS12} for the proof of the above proposition.

We now prove Theorem \ref{thm-new-sufficient-cond-b}.

\begin{proof}[{\bf Proof of Theorem \ref{thm-new-sufficient-cond-b}}]
By contradiction, we assume $\la_{1}(-L_{\Om})\geq\la_{*}$. Proposition \ref{prop-rs12-b}(2) yields
\begin{equation}\label{spectral-radii-bound-b}
r(K_{\Om}(\al-H_{\Om})^{-1})\leq1,\quad\forall\al>-\la_{*}.
\end{equation}

It is known from the variation of constants formula that for any $\al>-\la_{*}$ and $v\in\XX_{\Om}$, $(\al-H_{\Om})^{-1}v$ is given by
\begin{equation*}
\big((\al-H_{\Om})^{-1}v\big)(t,x)=\int_{-\infty}^{t}e^{\int_{s}^{t}(-D+a(\tau,x)-\al)d\tau}v(s,x)ds.
\end{equation*}
In particular, there holds the monotonicity of the operator $(\al-H_{\Om})^{-1}$ in the sense that 
$$
\text{$v_{1},v_{2}\in\XX_{\Om}$ with $v_{1}\geq v_{2}$ implies $(\al-H_{\Om})^{-1}v_{1}\geq(\al-H_{\Om})^{-1}v_{2}$.}
$$

Now, Proposition \ref{prop-rs12-b}(1) implies that for each $\al\in(-\la_{*},-\la_{*}+1]$,
\begin{equation*}
\big((\al-H_{\Om})^{-1}1\big)(t,x)\geq\frac{M}{\al-(-D+a_{T}(x))}>0,\quad (t,x)\in\R\times\ol{\Om}.
\end{equation*}
Applying $K_{\Om}$ to both sides of the above estimate, we find
\begin{equation}\label{estimate-0002-b}
\begin{split}
\big(K_{\Om}(\al-H_{\Om})^{-1}1\big)(t,x)&=D\int_{\Om}J(x-y)\big((\al-H_{\Om})^{-1}1\big)(t,y)dy\\
&\geq\int_{\Om}J(x-y)\frac{DM}{\al-(-D+a_{T}(y))}dy,\quad (t,x)\in\R\times\ol{\Om}.
\end{split}
\end{equation}
By the monotonicity of $(\al-H_{\Om})^{-1}$, \eqref{estimate-0002-b} and Proposition \ref{prop-rs12-b}(1), we find for each $(t,x)\in\R\times\ol{\Om}$
\begin{equation*}
\begin{split}
\big((\al-H_{\Om})^{-1}K_{\Om}(\al-H_{\Om})^{-1}1\big)(t,x)&\geq\bigg((\al-H_{\Om})^{-1}\int_{\Om}J(\cdot-y)\frac{DM}{\al-(-D+a_{T}(y))}dy\bigg)(t,x)\\
&\geq\frac{M}{\al-(-D+a_{T}(x))}\int_{\Om}J(x-y)\frac{DM}{\al-(-D+a_{T}(y))}dy,
\end{split}
\end{equation*}
and then
\begin{equation*}
\big((K_{\Om}(\al-H_{\Om})^{-1})^{2}1\big)(t,x)\geq\int_{\Om}J(x-y)\frac{DM}{\al-(-D+a_{T}(y))}\int_{\Om}J(y-z)\frac{DM}{\al-(-D+a_{T}(z))}dzdy.
\end{equation*}

Repeating the above arguments, we find for each $(t,x_{0})\in\R\times\ol{\Om}$ the following estimate
\begin{equation*}
\big((K_{\Om}(\al-H_{\Om})^{-1})^{n}1\big)(t,x_{0})\geq\int_{\Om}\cdots\int_{\Om}\prod_{m=1}^{n}\bigg[J(x_{m-1}-x_{m})\frac{DM}{\al-(-D+a_{T}(x_{m}))}\bigg]dx_{n}\cdots dx_{1}.
\end{equation*}
As a result,
\begin{equation*}
\begin{split}
\|(K_{\Om}(\al-H_{\Om})^{-1})^{n}\|&\geq\max_{(t,x_{0})\in\R\times\ol{\Om}}\big((K_{\Om}(\al-H_{\Om})^{-1})^{n}1\big)(t,x_{0})\\
&\geq\max_{x_{0}\in\ol{\Om}}\int_{\Om}\cdots\int_{\Om}\prod_{m=1}^{n}\bigg[J(x_{m-1}-x_{m})\frac{DM}{\al-(-D+a_{T}(x_{m}))}\bigg]dx_{n}\cdots dx_{1},
\end{split}
\end{equation*}
which implies that for any $x_{0}\in\ol{\Om}$ and $\de>0$,
\begin{equation*}
\begin{split}
&\|(K_{\Om}(\al-H_{\Om})^{-1})^{n}\|\\
&\quad\quad\geq\int_{\Om\cap B_{\de}(x_{0})}\cdots\int_{\Om\cap B_{\de}(x_{0})}\prod_{m=1}^{n}\bigg[J(x_{m-1}-x_{m})\frac{DM}{\al-(-D+a_{T}(x_{m}))}\bigg]dx_{n}\cdots dx_{1}\\
&\quad\quad\geq\bigg[\inf_{x\in\Om\cap B_{\de}(x_{0})}\int_{\Om\cap B_{\de}(x_{0})}J(x-y)\frac{DM}{\al-(-D+a_{T}(y))}dy\bigg]^{n},
\end{split}
\end{equation*}
where $B_{\de}(x_{0})$ is the open ball in $\R^{N}$ centered at $x_{0}$ with radius $\de$. We then use \eqref{spectral-radii-bound-b} and Gelfand's formula for the spectral radius of a bounded linear operator to obtain
\begin{equation}\label{uniform-estimate-0001-b}
1\geq\inf_{x\in\Om\cap B_{\de}(x_{0})}\int_{\Om\cap B_{\de}(x_{0})}J(x-y)\frac{DM}{\al-(-D+a_{T}(y))}dy=:I(x_{0},\de,\al)
\end{equation}
for all $x_{0}\in\ol{\Om}$, $\de>0$ and $\al\in(-\la_{*},-\la_{*}+1]$.

Since $J$ is continuous and $J(0)>0$, there exists $\de_{*}>0$ and $c_{*}>0$ such that $J\geq c_{*}$ on $B_{\de_{*}}(0)$, the open ball in $\R^{N}$ centered at $0$ with radius $\de_{*}$. Hence, 
\begin{equation*}
\begin{split}
I(x_{0},\de,\al)&\geq\inf_{x\in\Om\cap B_{\de}(x_{0})}\int_{\Om\cap B_{\de}(x_{0})\cap B_{\de_{*}}(x)}J(x-y)\frac{DM}{\al-(-D+a_{T}(y))}dy\\
&\geq c_{*}\inf_{x\in\Om\cap B_{\de}(x_{0})}\int_{\Om\cap B_{\de}(x_{0})\cap B_{\de_{*}}(x)}\frac{DM}{\al-(-D+a_{T}(y))}dy\\
&=c_{*}\int_{\Om\cap B_{\de}(x_{0})}\frac{DM}{\al-(-D+a_{T}(y))}dy
\end{split}
\end{equation*}
provided $2\de\leq\de_{*}$ so that $B_{\de}(x_{0})\subset B_{\de_{*}}(x)$ whenever $x\in\ol{B_{\de}(x_{0})}$. In particular, for any $x_{0}\in\ol{\Om}$ and $\al\in(-\la_{*},-\la_{*}+1]$,
\begin{equation*}
I(x_{0},\de_{*}/2,\al)\geq c_{*}\int_{\Om\cap B_{\de_{*}/2}(x_{0})}\frac{DM}{\al-(-D+a_{T}(y))}dy.
\end{equation*}

Since $\frac{1}{\max_{y\in\ol{\Om}}a_{T}(y)-a_{T}}\notin L^{1}_{loc}(\ol{\Om})$, or equivalently $\frac{1}{-\la_{*}-(-D+a_{T})}\notin L^{1}_{loc}(\ol{\Om})$, there exists $x_{*}\in\ol{\Om}$ such that
\begin{equation*}
\frac{1}{-\la_{*}-(-D+a_{T})}\notin L^{1}(\ol{\Om}\cap B_{\de_{*}/2}(x_{*})),
\end{equation*}
which implies the existence of some $\ep_{*}\in(0,1)$ such that
\begin{equation*}
c_{*}\int_{\Om\cap B_{\de_{*}/2}(x_{*})}\frac{DM}{-\la_{*}+\ep_{*}-(-D+a_{T}(y))}dy\geq2
\end{equation*}
for all $\ep\in(0,\ep_{*}]$. In particular, $I(x_{*},\de_{*}/2,-\la_{*}+\ep_{*})\geq2$, which contradicts to \eqref{uniform-estimate-0001-b}.
\end{proof}

To further investigate the properties of the principal eigenvalue $\la_{1}(-L_{\Om})$ under the condition \eqref{newcond1}, let us introduce the following notions.

\begin{definition}\label{defn-generalized-pe-b}
The {\rm generalized principal eigenvalues} of $-L_{\O}$ are defined as follows:
\begin{equation*}
\begin{split}
\lambda_{p}(-L_{\Om}):&=\sup\left\{\lambda\in\R:\exists\phi\in \mathcal{X}^{++}_{\Om}\,\,\text{s.t.}\,\,(L_{\Om}+\la)[\phi]\leq0\,\,\text{in ${\R\times\ol\O}$}\right\},\\
\lambda_{p}'(-L_{\Om}):&=\inf\left\{\lambda\in\R:\exists\phi\in \mathcal{X}^{++}_{\Om}\,\,\text{s.t.}\,\,(L_{\Om}+\la)[\phi]\geq0\,\,\text{in ${\R\times\ol\O}$}\right\}.
\end{split}
\end{equation*}
A pair $(\la,\phi)\in\R\times\XX_{\Om}^{++}$ is called a {\rm test pair} for $\la_{p}(-L_{\Om})$ (resp. $\la_{p}'(-L_{\Om})$) if $(L_{\Om}+\la)[\phi]\leq0$ in $\R\times\ol\O$ (resp. $(L_{\Om}+\la)[\phi]\geq0$ in $\R\times\ol\O$).
\end{definition}

It is easy to see that $\la_{p}(-L_{\Om})$ and $\la_{p}'(-L_{\Om})$ are always well-defined. We prove the ``Moreover" part of Theorem \ref{thm-pe-introduction}, which is restated in the following theorem.


\begin{theorem}\label{thm-principal-vs-generalized}
Suppose {\bf(H1)}, {\bf(H2)} and \eqref{newcond1}. There holds
\begin{equation*}\label{eigen1}
\la_{p}(-L_{\Om})=\lambda_p'(-L_{\Om})=\la_{1}(-L_{\Om}).
\end{equation*}
\end{theorem}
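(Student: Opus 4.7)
The plan is to obtain the chain of inequalities
$$
\lambda_p'(-L_\Om)\;\le\;\lambda_1(-L_\Om)\;\le\;\lambda_p(-L_\Om)
\q\text{and}\q
\lambda_p(-L_\Om)\;\le\;\lambda_1(-L_\Om)\;\le\;\lambda_p'(-L_\Om).
$$
The first two inequalities are immediate: by Theorem \ref{thm-new-sufficient-cond-b} together with Theorem \ref{prop-principal-e}, the hypothesis \eqref{newcond1} guarantees the existence of a principal eigenfunction $\psi\in\XX_\Om^{++}$ with $(L_\Om+\lambda_1(-L_\Om))[\psi]=0$. Then $(\lambda_1(-L_\Om),\psi)$ is a test pair admissible for both $\lambda_p$ and $\lambda_p'$, which yields $\lambda_p(-L_\Om)\ge\lambda_1(-L_\Om)\ge\lambda_p'(-L_\Om)$.

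The core of the argument is the reverse direction, which I would establish by a comparison at an extremal point of the quotient $\phi/\psi$. Let $(\lambda,\phi)\in\R\times\XX_\Om^{++}$ be any test pair for $\lambda_p(-L_\Om)$, so $(L_\Om+\lambda)[\phi]\le 0$. Since $\phi,\psi\in\XX_\Om^{++}$ are continuous and $T$-periodic in $t$, the ratio $\phi/\psi$ is well-defined, continuous, and $T$-periodic on $\R\times\ol\Om$, hence attains a positive minimum $k>0$ at some point $(t_0,x_0)\in\R\times\ol\Om$. Set $w:=\phi-k\psi\in\XX_\Om^+$; then $w\ge 0$ everywhere with $w(t_0,x_0)=0$. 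By linearity,
$$
(L_\Om+\lambda_1(-L_\Om))[w]=(L_\Om+\lambda)[\phi]+(\lambda_1(-L_\Om)-\lambda)\phi\;\le\;(\lambda_1(-L_\Om)-\lambda)\phi.
$$
Now evaluate at $(t_0,x_0)$: because $t\mapsto w(t,x_0)$ is $C^1$ and attains its minimum at the interior point $t_0$ (periodicity makes every point interior in the time variable), we have $w_t(t_0,x_0)=0$; also $w(t_0,x_0)=0$ and $\int_\Om J(x_0-y)w(t_0,y)\,dy\ge 0$ since $w\ge 0$. Therefore
$$
(L_\Om+\lambda_1(-L_\Om))[w](t_0,x_0)=D\int_\Om J(x_0-y)w(t_0,y)\,dy\;\ge\; 0.
$$
Combining the two estimates forces $(\lambda_1(-L_\Om)-\lambda)\phi(t_0,x_0)\ge 0$, and since $\phi(t_0,x_0)>0$ we conclude $\lambda\le\lambda_1(-L_\Om)$. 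Taking the supremum over admissible $\lambda$ gives $\lambda_p(-L_\Om)\le\lambda_1(-L_\Om)$.

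The inequality $\lambda_p'(-L_\Om)\ge\lambda_1(-L_\Om)$ is proved by the symmetric argument. Given a test pair $(\lambda,\phi)$ for $\lambda_p'$, with $(L_\Om+\lambda)[\phi]\ge 0$, pick $k=\sup_{\R\times\ol\Om}\phi/\psi$, attained at some $(t_1,x_1)$, and set $w=\phi-k\psi\le 0$ with $w(t_1,x_1)=0$. At this interior maximum $w_t(t_1,x_1)=0$ and $\int_\Om J(x_1-y)w(t_1,y)\,dy\le 0$, so $(L_\Om+\lambda_1(-L_\Om))[w](t_1,x_1)\le 0$; on the other hand $(L_\Om+\lambda_1(-L_\Om))[w]\ge(\lambda_1(-L_\Om)-\lambda)\phi$, which forces $\lambda\ge\lambda_1(-L_\Om)$, and hence $\lambda_p'(-L_\Om)\ge\lambda_1(-L_\Om)$.

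The main obstacle I anticipate is purely bookkeeping: one must verify that the extremum of $\phi/\psi$ really is attained (which uses $T$-periodicity and strict positivity of $\psi$), and that at this extremum the time derivative of $w$ vanishes and the nonlocal integral has the right sign. All of this is clean because of the $\XX_\Om^{++}$ regularity ($C^1$ in $t$) and the non-negativity of the kernel $J$; no strong maximum principle or propagation-of-positivity is needed, just the one-point evaluation above. No appeal to variational structure or to self-adjointness is required, which is what makes the method robust in this nonlocal, non-self-adjoint, time-periodic setting.
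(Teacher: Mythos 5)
Your proposal is correct, and the underlying idea is the same as the paper's: compare the test function against the principal eigenfunction at an extremal point of their ratio, and use that there the time-derivative vanishes and the nonlocal term has a definite sign. The difference is a matter of algebraic packaging. The paper works with the ratio $w=\phi_1/\phi$ and first derives an explicit identity for $L_\Om[\phi_1]$ expressed through $w$, which requires a short but somewhat fiddly rearrangement before evaluating at the max of $w$. You instead take the linear combination $w=\phi-k\psi$ touching zero from above (or below) and exploit linearity of $L_\Om$ directly together with $(L_\Om+\lambda_1(-L_\Om))[\psi]=0$, so that no rearrangement is needed: at the touching point the nonlocal term $D\int_\Om J(x_0-y)w(t_0,y)\,dy$ is exactly what remains of $(L_\Om+\lambda_1(-L_\Om))[w]$ and its sign is immediate. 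Your formulation is a bit cleaner and also avoids the indirect (contradiction) framing of the paper, obtaining $\lambda\le\lambda_1(-L_\Om)$ for every admissible $\lambda$ at once. One small presentational gap worth closing: when you write $(L_\Om+\lambda_1(-L_\Om))[w]\le(\lambda_1(-L_\Om)-\lambda)\phi$ you have silently used $(L_\Om+\lambda_1(-L_\Om))[k\psi]=0$; it is worth stating that this is where the eigenfunction equation enters, as it is the crux of why the difference $\phi-k\psi$ is the right object to consider.
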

\begin{proof}
For simplicity, we write $\la_{p}=\la_{p}(-L_{\Om})$, $\la_{p}'=\la_{p}'(-L_{\Om})$ and $\la_{1}=\la_{1}(-L_{\Om})$.  First, we prove 
$$
\la_{1}=\lambda_p.
$$
By Theorem \ref{thm-new-sufficient-cond-b}, there exists $\phi_1\in \XX^{++}_{\Om}$ such that 
\begin{equation}\label{pe-eqn-03-14}
L_{\Om}[\phi_{1}]+\la_{1}\phi_{1}=0\quad\text{in}\quad \R\times\ol{\Om}.
\end{equation}
Since $\inf_{\R\times\ol{\Om}}\phi_1>0$, one has $\la_{1}\leq\la_{p}$. We suppose by contradiction that $\la_{1}<\la_{p}$. From the definition of $\la_{p}$, we can find some $\la\in(\la_{1},\la_{p})$ and $\phi\in\XX_{\Om}^{++}$ such that 
\begin{equation}\label{pe-eqn-aux-03-14}
L_{\Om}[\phi]+\la\phi\leq0\quad\text{in}\quad\R\times\ol{\Om}. 
\end{equation}
Clearly, $w:=\frac{\phi_{1}}{\phi}\in\XX_{\Om}^{++}$.

Rewriting \eqref{pe-eqn-aux-03-14} as
\begin{equation*}
-\phi_{t}+a(t,x)\phi\leq-\la\phi-D\left[\int_{\Om}J(x-y)\phi(t,y)dy-\phi(t,x)\right],
\end{equation*}
we deduce
\begin{equation*}
\begin{split}
L_{\Om}[\phi_{1}]&=-w_{t}\phi+D\left[\int_{\Om}J(x-y)\phi(t,y)w(t,y)dy-w(t,x)\phi(t,x)\right]+[-\phi_{t}+a(t,x)\phi(x)]w\\
&\leq-w_{t}\phi+D\left[\int_{\Om}J(x-y)\phi(t,y)w(t,y)dy-w(t,x)\phi(t,x)\right] \\
&\quad+\left[-\la\phi-D\left(\int_{\Om}J(x-y)\phi(t,y)dy-\phi(t,x)\right)\right] w\\
&=-w_{t}\phi+D\int_{\Om}J(x-y)\phi(t,y)[w(t,y)-w(t,x)]dy-\la\phi_{1}.
\end{split}
\end{equation*}
Using \eqref{pe-eqn-03-14}, we find
\begin{equation}\label{an-inequality-000001}
-(\la_{1}-\la)\phi_{1}\leq-w_{t}\phi+D\int_{\Om}J(x-y)\phi(t,y)[w(t,y)-w(t,x)]dy.
\end{equation}

As $w\in\XX_{\Om}^{++}$, there exists $(t_{0},x_{0})\in\R\times\ol{\Om}$ such that
\begin{equation*}\label{23.11.1}
w(t_{0},x_{0})=\max_{(t,x)\in\R\times\ol{\Om}}w(t,x). 
\end{equation*}
Then, $w_{t}(t_{0},x_{0})=0$. Hence, setting $(t,x)=(t_{0},x_{0})$ in \eqref{an-inequality-000001} yields $-(\la_{1}-\la)\phi_{1}(t_{0},x_{0})\leq0$, which leads to $\la_{1}\geq\la$. This contradiction confirms $\la_{1}=\lambda_p$.

Next, we prove
$$
\lambda_1=\lambda_p'.
$$
Obviously $\lambda_1\geq\lambda_p'$. Assume that $\lambda_1>\lambda_p'$. One can find some $\tilde{\lambda}\in (\lambda_p',\lambda_1)$ and $\tilde{\phi}\in\XX_{\Om}^{++}$ such that $L[\tilde{\phi}]+\tilde{\la}\tilde{\phi}\geq0$. Set $\tilde{w}:=\frac{\phi_1}{\tilde{\phi}}$. The same arguments as above apply and we derive
\begin{equation}\label{22.11.1}
0>-(\la_{1}-\tilde{\la})\phi_{1}\geq-\tilde{w}_{t}\phi+D\int_{\Om}J(x-y)\tilde{\phi}(t,y)[\tilde{w}(t,y)-\tilde{w}(t,x)]dy.
\end{equation}
One can find some $(t_1,x_1)\in \R\times\ol{\Om}$ such that
\begin{equation*}
\tilde{w}(t_{1},x_{1})=\min_{(t,x)\in\R\times\ol{\Om}}\tilde{w}(t,x). 
\end{equation*}
Substituting $(t_1,x_1)$ into the right-hand side of \eqref{22.11.1}, we derive the contradiction.
\end{proof}

We remark that the parabolic-type operator $-L_{\Om}$ is not self-adjoint, and thus, we are lack of the usual $L^{2}(\Om)$ variational formula for the principal eigenvalue $\la_{1}(-L_{\Om})$. The $\sup$-$\inf$ characterizations of $\la_{1}(-L_{\Om})$ given in Theorem \ref{thm-principal-vs-generalized} remedy the situation and  play crucial roles in the sequel.


\section{Bifurcation and global dynamics}\label{sec-bifurcation-global-dynamics}

In this section, we study the long-time behaviors of the solutions of \eqref{main-eqn-Dirichlet-simple1}, namely, 
\begin{equation*}
u_{t}(t,x)=D\left[\int_{\Om}J(x-y)u(t,y)dy-u(t,x)\right]+f(t,x,u(t,x)),\quad t>0,\quad x\in\ol{\Om},
\end{equation*}
and prove Theorem \ref{thm-persistence-criterion-b}.

We define the following spaces:
\begin{equation*}
\begin{split}
X_{\Om}&=C(\ol{\Om}),\\
X_{\Om}^{+}&=\big\{v\in X_{\Om}\big|v(x)\geq0,\,\,x\in\ol{\Om}\big\},\quad\text{and}\\
X_{\Om}^{++}&=\big\{v\in X_{\Om}\big|v(x)>0,\,\,x\in\ol{\Om}\big\}.
\end{split}
\end{equation*}
Denote by $\|\cdot\|_{\infty}$ the max norm on $X_{\Om}$. For $u_{0}\in X_{\Om}$, we denote by $u(t,\cdot;u_{0})\in X_{\Om}$ for all $t>0$ the unique solution of \eqref{main-eqn-Dirichlet-simple1} with initial data $u(0,\cdot;u_{0})=u_{0}$. By the comparison principle (see e.g. \cite{HSV08,RS12}), if $u_{0}\in X_{\Om}^{+}$, then $u(t,\cdot;u_{0})\in X_{\Om}^{+}$ for all $t>0$. Moreover, if $u_{0}\in X_{\Om}^{+}\bs\{0\}$, then $u(t,\cdot;u_{0})\in X_{\Om}^{++}$ for all $t>0$. 

Let us consider the linearization of \eqref{main-eqn-Dirichlet-simple1} at $u\equiv0$, namely,
\begin{equation}\label{main-eqn-linear1-b}
u_{t}(t,x)=D\left[\int_{\O}J(x-y)u(t,y)dy-u(t,x)\right]+a(t,x)u(t,x),\quad t>0,\quad x\in\ol{\O}.
\end{equation}
Denote by $\{\Phi(t;s)\}_{t\geq s\geq0}$ the evolution family on $X_{\Om}$ generated by \eqref{main-eqn-linear1-b}, that is, if $u(t,x;s,u_{0})$ is the unique solution of \eqref{main-eqn-linear1-b} with initial data $u(s,\cdot;s,u_{0})=u_{0}\in X_{\Om}$, then $u(t,\cdot;s,u_{0})=\Phi(t;s)u_{0}\in X_{\Om}$ for all $t\geq s$. By comparison principle, if $u_{0}\in X_{\Om}^{+}$, so does $\Phi(t;s)u_{0}$ for all $t>s$. Moreover, if $u_{0}\in X_{\Om}^{+}\bs\{0\}$, then $\Phi(t;s)u_{0}\in X^{++}_{\Om}$ for all $t>s$. Also, by time-periodicity, one has $\Phi(t+T,s+T)=\Phi(t,s)$ for all $t\geq s\geq0$. The operator norm of $\Phi(t,s)$ is denoted by $\|\Phi(t,s)\|$.

To prove Theorem \ref{thm-persistence-criterion-b}, we first prove the following two comparison principles.

\begin{proposition}\label{Prop1}
Let $u\in\XX_{\Om}^{++}$ be a sub-solution of \eqref{main-eqn-Dirichlet-simple} and $v\in\XX_{\Om}^{++}$ be a super-solution of \eqref{main-eqn-Dirichlet-simple}. Then, $u\leq v$ in $\R\times\ol{\Om}$.
\end{proposition}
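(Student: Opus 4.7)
The plan is to use a sliding/scaling comparison argument tailored to the KPP monotonicity in assumption \textbf{(H2)}(3). Since $v\in\XX_\Om^{++}$ is continuous and $T$-periodic on the compact set $[0,T]\times\ol\Om$, one has $\inf_{\R\times\ol\Om}v>0$, and since $u$ is bounded above, the set $\{\tau>0:\tau v\geq u\txt{in}\R\times\ol\Om\}$ is nonempty. Define
\[
\tau^{*}:=\inf\{\tau>0:\tau v\geq u\txt{in}\R\times\ol\Om\}.
\]
By continuity and $T$-periodicity, $\tau^{*}v\geq u$ in $\R\times\ol\Om$, and if $\tau^{*}>0$ then there is a touching point $(t_{0},x_{0})\in\R\times\ol\Om$ with $\tau^{*}v(t_{0},x_{0})=u(t_{0},x_{0})$; otherwise, by compactness of $[0,T]\times\ol\Om$ and periodicity one could decrease $\tau^{*}$. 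The conclusion $u\leq v$ is equivalent to $\tau^{*}\leq 1$, so I argue by contradiction and assume $\tau^{*}>1$.

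Set $w:=\tau^{*}v-u\in\XX_\Om$. Then $w\geq0$ on $\R\times\ol\Om$ and $w(t_{0},x_{0})=0$. Because $t\mapsto w(t,x_{0})$ is $T$-periodic and $C^{1}$, its minimum is attained and the time-derivative vanishes there, so $w_{t}(t_{0},x_{0})=0$, i.e.\ $\tau^{*}v_{t}(t_{0},x_{0})=u_{t}(t_{0},x_{0})$. Using that $u$ is a sub-solution and $v$ is a super-solution of \eqref{main-eqn-Dirichlet-simple}, I subtract the two inequalities evaluated at $(t_{0},x_{0})$ to obtain
\[
0\;\geq\;D\int_{\Om}J(x_{0}-y)\,w(t_{0},y)\,dy\;-\;Dw(t_{0},x_{0})\;+\;\tau^{*}f(t_{0},x_{0},v(t_{0},x_{0}))\;-\;f(t_{0},x_{0},u(t_{0},x_{0})).
\]
The nonlocal integral is nonnegative because $w\geq0$ and $J\geq0$, and the pointwise term $Dw(t_{0},x_{0})$ vanishes.

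It remains to show that the $f$-contribution is strictly positive, which produces the contradiction. Since $\tau^{*}>1$ and $v(t_{0},x_{0})>0$, one has $u(t_{0},x_{0})=\tau^{*}v(t_{0},x_{0})>v(t_{0},x_{0})>0$, so the KPP hypothesis \textbf{(H2)}(3) applied at $(t_{0},x_{0})$ gives the strict inequality
\[
\frac{f(t_{0},x_{0},u(t_{0},x_{0}))}{u(t_{0},x_{0})}\;<\;\frac{f(t_{0},x_{0},v(t_{0},x_{0}))}{v(t_{0},x_{0})}.
\]
Multiplying by $u(t_{0},x_{0})=\tau^{*}v(t_{0},x_{0})>0$ yields $f(t_{0},x_{0},u(t_{0},x_{0}))<\tau^{*}f(t_{0},x_{0},v(t_{0},x_{0}))$, so the $f$-contribution is strictly positive and we conclude $0>0$, a contradiction. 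Hence $\tau^{*}\leq 1$, which gives $u\leq\tau^{*}v\leq v$ as claimed.

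The main obstacle is guaranteeing the existence of the touching point and confirming $w_{t}(t_{0},x_{0})=0$ there; this relies crucially on the $T$-periodicity in $t$ together with the $C^{1,0}$ regularity built into $\XX_\Om$, which turns the reduction of the $t$-variable into a compact minimization on a circle. Once the touching point is in hand, the absence of any differential operator in $x$ (only the bounded, nonnegative nonlocal term appears) makes the remaining step purely algebraic via the KPP monotonicity; no Harnack-type or regularizing argument is needed.
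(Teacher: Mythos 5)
Your proof is correct and follows essentially the same scaling/touching-point argument as the paper: both rescale one of the two functions by a multiplicative factor, locate a touching point where the time-derivative vanishes thanks to $T$-periodicity and $C^{1,0}$ regularity, and then combine the strict monotonicity of $s\mapsto f(t,x,s)/s$ from \textbf{(H2)}(3) with the nonnegativity of the nonlocal term to get a contradiction. The only (inessential) difference is that you scale $v$ via $\tau^{*}=\inf\{\tau>0:\tau v\geq u\}$ whereas the paper scales $u$ via $\al_{*}=\sup\{\al>0:\al u\leq v\}$; incidentally, your version sidesteps a small sign slip in the paper's write-up, where the dichotomy should read ``if $\al_{*}\geq1$ we are done; assume $\al_{*}<1$,'' so that \textbf{(H2)}(3) gives $f(t,x,\al_{*}u)>\al_{*}f(t,x,u)$ at the touching point.
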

\begin{proof}
Let
$$
\al_{*}:=\sup\left\{\al>0:\al u\leq v\,\,\text{in}\,\,\R\times\ol{\Om}\right\}.
$$
By the assumptions on $u$ and $v$, the number $\al_{*}$ is well-defined and positive. If $\al_{*}\leq1$, then we are done. So, we assume $\al_{*}>1$.

Set $w:=v-\al_{*}u$. Then, $w\geq0$ and there exists $(t_{0},x_{0})\in\R\times\ol{\Om}$ such that $w(t_{0},x_{0})=0$. Obviously, $w$ satisfies
\begin{equation*}
\begin{split}
w_{t}(t,x)&\geq D\left[\int_{\Om}J(x-y)w(t,y)dy-w(t,x)\right]+f(t,x,v(t,x))-\al_{*}f(t,x,u(t,x))\\
&>D\left[\int_{\Om}J(x-y)w(t,y)dy-w(t,x)\right]+f(t,x,v(t,x))-f(t,x,\al_{*}u(t,x)),\quad (t,x)\in\R\times\ol{\Om},
\end{split}
\end{equation*}
where we used {\bf(H2)}-(3) and $\al_{*}>1$ in the second inequality. Considering the above inequality at $(t_{0},x_{0})$, we find the contradiction immediately.
\end{proof}

\begin{proposition}\label{Prop1-2}
Let $u$ be a nonnegative and bounded solution of \eqref{main-eqn-Dirichlet-simple} and be equi-continuous in space, namely, for any $\ep>0$ there exists $\de>0$ such that 
$$
x,y\in\ol{\Om},\,\,|x-y|\leq\de\quad\text{implies}\quad|u(t,x)-u(t,y)|\leq\de,\,\,\forall t\in\R.
$$
If $v\in\XX_{\Om}^{++}$ is a super-solution of \eqref{main-eqn-Dirichlet-simple}, then $u\leq v$ in $\R\times\ol{\Om}$.
\end{proposition}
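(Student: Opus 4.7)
The plan is to adapt the sliding/scaling argument from Proposition \ref{Prop1} but compensate for the fact that $u$ is not known to be bounded below by a positive constant by using the equi-continuity hypothesis to run a compactness argument on time-translates. Concretely, since $v\in\XX_{\Om}^{++}$ is $T$-periodic and continuous, $\inf_{\R\times\ol{\Om}}v>0$; together with boundedness of $u\geq 0$, the quantity
$$
\al_*:=\sup_{(t,x)\in\R\times\ol{\Om}}\frac{u(t,x)}{v(t,x)}
$$
is finite and $u\leq\al_* v$ on $\R\times\ol{\Om}$. The goal becomes showing $\al_*\leq 1$; I would argue by contradiction, assuming $\al_*>1$.

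Set $w:=\al_* v-u\geq 0$. By definition of $\al_*$ there is a sequence $(t_n,x_n)$ with $w(t_n,x_n)\to 0$, but $w$ need not attain its infimum on $\R\times\ol{\Om}$. Write $t_n=k_nT+s_n$ with $k_n\in\Z$ and $s_n\in[0,T)$, and consider the translates $u_n(t,x):=u(t+k_nT,x)$. By $T$-periodicity of $f$, each $u_n$ remains a nonnegative bounded solution of \eqref{main-eqn-Dirichlet-simple} on $\R\times\ol{\Om}$. The family $\{u_n\}$ is uniformly bounded, equi-continuous in $x$ by hypothesis, and uniformly Lipschitz in $t$ (directly from the equation, since the right-hand side is bounded in terms of $\|u\|_\infty$ and $f$). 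By Arzel\`a--Ascoli on $[-R,R]\times\ol{\Om}$ and a diagonal extraction, along a subsequence $u_n\to u_\infty$ uniformly on compacta of $\R\times\ol{\Om}$, with $s_n\to s_*\in[0,T]$ and $x_n\to x_*\in\ol{\Om}$. Passing to the limit in the equation (using uniform convergence and continuity of $f$) shows $u_\infty$ is itself a nonnegative bounded solution of \eqref{main-eqn-Dirichlet-simple} on $\R\times\ol{\Om}$, and $T$-periodicity of $v$ gives $w_\infty:=\al_* v-u_\infty\geq 0$ on $\R\times\ol{\Om}$ with $w_\infty(s_*,x_*)=0$. Hence $w_\infty$ attains a global minimum at $(s_*,x_*)$.

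Subtracting the equation for $u_\infty$ from the supersolution inequality for $\al_* v$,
$$
\partial_t w_\infty\geq D\left[\int_{\Om}J(x-y)w_\infty(t,y)dy-w_\infty(t,x)\right]+\al_* f(t,x,v)-f(t,x,u_\infty).
$$
At $(s_*,x_*)$ the interior minimum gives $\partial_t w_\infty(s_*,x_*)=0$ and $w_\infty(s_*,x_*)=0$, while $\int_{\Om}J(x_*-y)w_\infty(s_*,y)dy\geq 0$ since the integrand is nonnegative. Moreover $u_\infty(s_*,x_*)=\al_* v(s_*,x_*)>0$, so by \textbf{(H2)}-(3) applied with $\al_*>1$,
$$
\al_* f(s_*,x_*,v(s_*,x_*))-f(s_*,x_*,\al_* v(s_*,x_*))>0.
$$
Inserting these into the inequality yields $0\geq 0+(\text{strictly positive})>0$, a contradiction, so $\al_*\leq 1$ and $u\leq v$ on $\R\times\ol{\Om}$.

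The main obstacle is clearly the compactness step producing $u_\infty$: without parabolic regularization from the nonlocal dispersal semigroup, no automatic equi-continuity in $x$ is available, which is precisely why the explicit hypothesis \eqref{equi} is imposed. Once that step is in place, the equi-continuity in $t$ follows free of charge from the equation, and the KPP monotonicity \textbf{(H2)}-(3) provides the strict positivity needed to reach the contradiction at the touching point.
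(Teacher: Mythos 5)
Your proof is correct and follows the same global strategy as the paper: introduce the extremal constant $\al_*=\sup u/v$ (the paper's $k_*=\inf\{k>0: kv\ge u\}$ is the same number), exploit the equi-continuity hypothesis together with boundedness to run an Arzel\`a--Ascoli argument on time-translates, and derive a contradiction from the KPP condition \textbf{(H2)}-(3) at a touching point. Your translation by $k_nT$ rather than by $t_n$ is a slight cosmetic simplification (it keeps the equation literally invariant rather than producing a $\tau_*$-shifted $f$), but that is not an essential difference.

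Where you genuinely diverge is the final contradiction. The paper, having produced $\ol w=\ol u-k_*\ol v\le 0$ with $\ol w(0,x_*)=0$, invokes a strong maximum principle for nonlocal parabolic-type operators (Theorem~3.2 of \cite{HMMV03}) to spread the zero over a full slab $[-t^*,0]\times\ol\Om$, and then observes that $\ol w\equiv 0$ on a slab is incompatible with the \emph{strict} inequality \eqref{29.11.1}. You instead evaluate the differential inequality directly at the touching point $(s_*,x_*)$: since $w_\infty\ge 0$ globally in $t\in\R$ and vanishes there, $\partial_t w_\infty(s_*,x_*)=0$, the nonlocal term $D\int_\Om J(x_*-y)w_\infty(s_*,y)\,dy$ is nonnegative, and \textbf{(H2)}-(3) with $\al_*>1$ and $v(s_*,x_*)>0$ gives $\al_*f(s_*,x_*,v)-f(s_*,x_*,\al_*v)>0$, producing $0\ge(\text{nonneg})+(\text{strictly positive})$, a contradiction. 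This pointwise evaluation is more elementary and self-contained than the paper's appeal to the strong maximum principle; it buys you independence from the external reference at the cost of nothing, since both routes already rely on the strict monotonicity of $s\mapsto f(t,x,s)/s$. The only caveats, which you handle implicitly, are that one should note $w_\infty$ is $C^1$ in $t$ (it is, because the limit equation holds and its right-hand side is continuous), and that passing to the limit in the equation is justified by uniform convergence of the right-hand side so $\partial_t u_n$ converges as well.
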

\begin{proof}
By the assumptions on $u$ and $v$, it is obviously seen that
$$
k_{*}:=\inf\{k>0|kv\geq u\text{ in}\,\,\R\times\ol{\Om}\} 
$$ 
is well-defined and positive. 

Let us assume by contradiction that $k_{*}>1$. We see that $w:=u-k^{*}v\leq0$ on $\R\times\ol{\Om}$, and by the definition of $k^*$, there exists a sequence $\{(t_n,x_n)\}\subset\R\times\ol{\Om}$ such that
$$
w(t_{n},x_{n})=u(t_n,x_n)-k^{*}v(t_n,x_n)\to0\quad\text{as}\quad n\to\infty.
$$
Let $\tau_n\in[0,T]$ be such that $t_n-\tau_n\in T\Z$. Up to a subsequence, we may assume, without loss of generality, that $\tau_n\to\tau_{*}\in[0,T]$, $x_n\to x_{*}\in\ol{\Om}$. Considering the functions 
$$
u_n(t,x)=u(t+t_n,x),\quad v_n(t,x)=v(t+t_n,x)\quad\text{and}\quad w_n(t,x)=w(t+t_n,x), 
$$
we see that $w_n$ satisfies
\begin{equation}\label{inequality-w-03-14}
\begin{split}
&\partial_t w_n(t,x)-D\left[\int_{\Om}J(x-y)w_n(t,y)dy-w_n(t,x)\right]\\
&\quad\quad\leq f(t+t_n,x,u_n(t,x))-k_*f(t+t_n,x,v_n(t,x)),\quad (t,x)\in\R\times\ol{\Om}.
\end{split}
\end{equation}
Since $v$ is $T$-periodic in $t$, $v_n(t,x)$ converges, uniformly in $(t,x)\in\R\times\ol{\Om}$, as $n\to\infty$, to $\ol v(t,x)=v(t+\tau_{*},x)$. Moreover, $\partial_{t}v_n(t,x)$ converges, uniformly in $(t,x)\in\R\times\ol{\Om}$, as $n\to\infty$, to $\ol v_{t}(t,x)$.

By the assumptions on $u$ and {\bf(H2)}-(1), $\{u_{n}\}_{n}$, $\{\partial_{t}u_{n}\}_{n}$ are $\{\partial_{tt}^{2}u_{n}\}_{n}$  are uniformly bounded and equi-continuous in space and time. Arzel\`{a}-Ascoli theorem then implies the existence of some continuous function $\ol{u}:\R\times\ol{\Om}\to\R$, continuously differentiable in $t$, such that $u_n(t,x)\to \ol u(t,x)$ and $\partial_{t}u_n(t,x)\to \ol u_{t}(t,x)$ locally uniformly in $(t,x)\in\R\times\ol{\Om}$ as $n\to\infty$ along some subsequence. In particular, $\ol{u}$ satisfies \eqref{main-eqn-Dirichlet-simple}. Therefore, setting $\ol w:=\ol u-k_*\ol v$ and letting $n\to\infty$ in \eqref{inequality-w-03-14}, we find
\begin{equation}\label{29.11.1}
\begin{split}
&\partial_t \ol w(t,x)-D\left[\int_{\Om}J(x-y)\ol w(t,y)dy-\ol w(t,x)\right]\\
&\quad\quad\leq f(t+\tau_{*},x,\ol u(t,x))-k_*f(t+\tau_{*},x,\ol v(t,x))\\
&\quad\quad< f(t+\tau_{*},x,\ol u(t,x))-f(t+\tau_{*},x,k_*\ol v(t,x))\\
&\quad\quad= \tilde{a}(t,x)\ol w(t,x),\quad (t,x)\in\R\times\ol{\Om},
\end{split}
\end{equation}
where we used {\bf(H2)}-(3) and 
$$
\tilde{a}(t,x)=\begin{cases}
0,&\quad \ol{u}(t,x)=k_{*}\ol{v}(t,x),\\
\frac{f(t+\tau_{*},x,\ol u(t,x))-f(t+\tau_{*},x,k^*\ol v(t,x))}{\ol u(t,x)-k_*\ol v(t,x)},&\quad\ol{u}(t,x)=k_{*}\ol{v}(t,x).
\end{cases}
$$

Obviously, $\ol w(0,x_{*})=0$, the maximum principle (see Theorem 3.2 \cite{HMMV03}), asserts that there exists $t^*\in[0,T]$ such that $\ol{w}(t,x)=0$ for $(t,x)\in[-t^*,0]\times\ol{\Om}$. This contradicts to (\ref{29.11.1}). Hence, $k_{*}\leq1$ and $v\geq u$ in $\R\times\ol\O$ follows.
\end{proof}

We are now in the position to prove Theorem \ref{thm-persistence-criterion-b}.

\begin{proof}[{\bf Proof of Theorem \ref{thm-persistence-criterion-b}}]
Let $\la_{1}=\la_{1}(-L_{\Om})$ for simplicity. 

If $\la_{1}<0$, the results in \cite[Theorem E]{RS12} using a contraction argument confirms the existence and uniqueness of a solution $u^{*}\in\XX_{\Om}^{++}$ of \eqref{main-eqn-Dirichlet-simple} as well as the statement in (i). For the sake of completeness, we outline the arguments.
 
On one hand, it is easy to see from {\bf(H2)}-(4) that for any $M\gg1$, $u(t,x)\equiv M$ is a super-solution of \eqref{main-eqn-Dirichlet-simple}, and then, the time-periodicity implies that $\{u(nT,\cdot;M)\}_{n}$ is a non-increasing sequence. Therefore,
$$
u^{+}(x):=\lim_{n\to\infty}u(nT,x;M),\quad x\in\ol{\Om}
$$
is well-defined and upper semi-continuous.  On the other hand, for any $0<\ep\ll1$, it can be shown using the assumption $\la_{1}<0$ that $\ep\phi_{1}$ is a sub-solution of \eqref{main-eqn-Dirichlet-simple}, where $\phi_{1}$ is a fixed principal eigenfunction of $-L_{\Om}$, and then, $\{u(nT,\cdot;\ep\phi_{1}(0,\cdot))\}_{n}$ is a non-decreasing sequence. Therefore,
$$
u^{-}(x):=\lim_{n\to\infty}u(nT,x;\ep\phi_{1}(0,\cdot)),\quad x\in\ol{\Om}
$$
is well-defined and lower semi-continuous. Clearly, $u^{-}\leq u^{+}$.

Arguments using part metric then ensure $u^{+}=u^{-}$. To be more specific, we define
$$
\rho_{n}:=\inf\left\{\ln\al:\frac{1}{\al}u(nT,x;M)\leq u(nT,x;\ep\phi_{1}(0,\cdot))\leq\al u(nT,x;M)\right\}.
$$
It can be shown that the sequence $\{\rho_{n}\}_{n}$ is decreasing, and thus,
$$
\rho_{*}:=\lim_{n\to\infty}\rho_{n}
$$
is well-defined. It can be further shown that $\rho_{*}=0$, which implies $u^{+}=u^{-}$. 

Hence, $v^{*}:=u^{+}$ is continuous and $\inf_{x\in\ol{\Om}}v^{*}>0$. Clearly, $u(T,\cdot;v^{*})=v^{*}$. Then, $v^{*}$ can be easily extended to a solution $u^{*}\in\XX_{\Om}^{++}$ of \eqref{main-eqn-Dirichlet-simple} such that $u^{*}(t,\cdot)=u(t,\cdot;v^{*})$ for $t\in[0,T]$.

By the above contraction argument, the uniqueness of solutions of \eqref{main-eqn-Dirichlet-simple} in the space $\XX_{\Om}^{++}$ follows. The uniqueness also follows directly from Proposition \ref{Prop1}. The global stability of $u^{*}$ follows again from the contraction argument.

Next, we show that if $\la_{1}\geq0$, then the equation \eqref{main-eqn-Dirichlet-simple} admits no solution in $\XX_{\Om}^{++}$. For contradiction, suppose that $v^{*}\in\XX_{\Om}^{++}$ is a solution of \eqref{main-eqn-Dirichlet-simple}. Let $\phi_1$ be the principal eigenfunction associated to $\la_{1}$ with the normalization $\phi_1<v^{*}$ in $\R\times\ol{\Om}$. We see from {\bf(H2)}-(3) that
\begin{equation*}
\begin{split}
0&\leq\la_{1}\phi_{1}(t,x)\\
&=\partial_t\phi_1(t,x)-D\left[\int_{\Om}J(x-y)\phi_1(t,y)dy-\phi_1(t,x)\right]-a(t,x)\phi_1(t,x)\\
&\leq \partial_t\phi_1(t,x)-D\left[\int_{\Om}J(x-y)\phi_1(t,y)dy-\phi_1(t,x)\right]-f(t,x,\phi_1(t,x)),\quad(t,x)\in\R\times\ol{\Om},
\end{split}
\end{equation*}
that is, $\phi_{1}$ is a super-solution of \eqref{main-eqn-Dirichlet-simple}. By Proposition \ref{Prop1}, there holds $v^{*}\leq \phi_1$ in $\R\times\ol{\Om}$, which contradicts to the normalization.

Finally, we prove $\rm(ii)$ and $\rm(iii)$.

(ii) Suppose $\la_{1}>0$. Since $f(t,x,u(t,x;u_{0}))\leq a(t,x)u(t,x;u_{0})$, one verifies
\begin{equation*}
u_{t}(t,x;u_{0})\leq D\left[\int_{\Om}J(x-y)u(t,y;u_{0})dy-u(t,x;u_{0})\right]+a(t,x)u(t,x;u_{0}).
\end{equation*}
Comparison principle  yields $u(t,\cdot;u_{0})\leq \Phi(t,0)u_{0}$.

We claim $\|\Phi(t,0)u_{0}\|_{\infty}\to0$ as $t\to\infty$. Write $t=[t]+r_{t}$, where $[t]$ is the largest number of the form $nT$ not great than $t$ and $r_{t}\in[0,T)$. By time-periodicity, one finds
\begin{equation*}
\Phi(t,0)u_{0}=\Phi(t,[t])\Phi([t],[t]-T)\cdots\Phi(T,0)u_{0}=\Phi(t_{r},0)\Phi(T,0)^{\frac{[t]}{T}}u_{0}.
\end{equation*}
Obviously, there is $C=C(T)>0$ such that $\|\Phi(t_{r},0)\|\leq C$. It is well-known that 
\begin{equation*}
r(\Phi(T,0))=\lim_{n\to\infty}\|\Phi(T,0)^{n}\|^{\frac{1}{n}}\quad(\text{Gelfand's formula}).
\end{equation*}
Moreover, by \cite[Proposition 3.10]{RS12}, one has 
$$
-\la_{1}=\frac{\ln r(\Phi(T,0))}{T}, 
$$
and therefore, 
$$
e^{-\la_{1}T}=\lim_{n\to\infty}\|\Phi(T,0)^{n}\|^{\frac{1}{n}}. 
$$
In particular, one finds $\|\Phi(T,0)^{n}\|\leq e^{-\frac{\la_{1}}{2}Tn}$ for $n\gg1$. Hence, 
\begin{equation*}
\|\Phi(t,0)u_{0}\|_{\infty}\leq C\|u_{0}\|_{\infty}\|\Phi(T,0)^{\frac{[t]}{T}}\|\leq C\|u_{0}\|_{\infty}e^{-\frac{\la_{1}}{2}[t]}\to0\quad\text{as}\quad t\to\infty.
\end{equation*}
This proves the claim and confirms the statement.

(iii) Let $u(t,x;u_{0})$ be the solution as in the statement. Suppose $\|u(t,\cdot;u_{0})\|_{\infty}\nrightarrow0$ as $t\to\infty$. Then, we can find  some  $\de_{0}>0$ and a sequence $\{(t_{n},x_{n})\}\subset[t_{0},\infty)\times\ol{\Om}$ with $t_{n}\to\infty$ as $n\to\infty$ such that
\begin{equation}\label{positivity-cond-001}
u(t_{n},x_{n};u_{0})\geq\de_{0},\quad\forall n.
\end{equation}
Set $u_{n}(t,x)=u(t+t_{n},x;u_{0})$. Arguments as in the proof of Proposition \ref{Prop1} ensure the existence of some continuous function $u:\R\times\ol{\Om}\to\R$, continuously differentiable in $t$, such that $u_n(t,x)\to u(t,x)$ and $\partial_{t}u_n(t,x)\to u_{t}(t,x)$ locally uniformly in $(t,x)\in\R\times\ol{\Om}$ as $n\to\infty$ along some subsequence (still denoted by $n\to\infty$). Clearly, $u(t,x)$ is a nonnegative and bounded solution of \eqref{main-eqn-Dirichlet-simple}.

We claim that $u\equiv0$. If not, then we can normalize $\phi_{1}$ so that there is some $(t_{*},x_{*})\in\R\times\ol{\Om}$ such that $u(t_{*},x_{*})>\phi_{1}(t_{*},x_{*})$. As in  (2), we can show that $\phi_{1}$ is super-solution of \eqref{main-eqn-Dirichlet-simple}. Applying Proposition \ref{Prop1-2}, we then conclude that $u\leq\phi_{1}$, which leads to a contradiction.

But, the estimate \eqref{positivity-cond-001} says that passing $n\to\infty$ along some subsequence, we find some $x_{*}\in\ol{\Om}$ such that 
$$
u(0,x_{*})\leftarrow u(0,x_{n})=u(t_{n},x_{n};u_{0})\geq\de_{0},
$$
which contradicts to $u\equiv0$. Hence, $\lim_{t\to\infty}\|u(t,\cdot;u_{0})\|_{\infty}=0$.
\end{proof}


\section{Effects of the dispersal rate}\label{sec-effect-dispersal-rate}

In this section, we study the effects of the dispersal rate $D$ on $\la_{1}(D):=\la_{1}(-L_{\Om})$ and the positive $T$-periodic solution associated to the equations \eqref{main-eqn-Dirichlet-simple1} and \eqref{main-eqn-Dirichlet-simple}. In particular, we prove Theorem \ref{thm-effect-dispersal-rate-intro}.

We first prove Theorem \ref{thm-effect-dispersal-rate-intro}(1) concerning the effects of $D$ on $\la_{1}(D)$. 

\begin{proof}[{\bf Proof of Theorem \ref{thm-effect-dispersal-rate-intro}(1)}]
As $\la_{1}(D)$ is an isolated eigenvalue, the continuous differentiability follows from the classical perturbation theory (see e.g. \cite{Ka95}).

We first prove that
\begin{equation}\label{limit-D-0}
\la_{1}(D)\to-\max_{x\in\ol{\Om}}a_{T}(x)\quad\text{as}\quad D\to0^{+}.
\end{equation}
We claim that, for each $0<\ep\ll1$, there exists $D_{\ep}\in(0,1)$ such that
\begin{equation}\label{bound-on-pev-small-dispersal}
-\max_{x\in\ol{\Om}}a_{T}(x)-\ep\leq\la_{1}(D)\leq-\min_{x\in\ol{\Om}}a_{T}(x)+\ep,\quad\forall D\in(0,D_{\ep}).
\end{equation}
It is easy to check that 
$$
\phi(t,x):=e^{\int_{0}^{t}\left[a(s,x)-a_{T}(x)\right]ds},\quad (t,x)\in\R\times\ol{\Om}.
$$
is a positive $T$-periodic solution of $\phi_{t}=a(t,x)\phi-a_{T}(x)\phi$. In particular, $\phi\in\XX^{++}_{\Om}$. 

For any $0<\ep\ll1$, we set 
$$
\la_{\ep}^{\max}=-\max_{x\in\ol{\Om}}a_{T}(x)-\ep\quad\text{and}\quad\la_{\ep}^{\min}=-\min_{x\in\ol{\Om}}a_{T}(x)+\ep.
$$
Using $\min_{(t,x)\in[0,T]\times\ol{\Om}}\phi(t,x)>0$, it is straightforward to check that for each $0<\ep\ll1$, there exists $0<D_{\ep}\ll1$ such that for each $D\in(0,D_{\ep})$, there hold
\begin{equation}\label{test-pairs}
(L_{\Om}+\la_{\ep}^{\max})[\phi]\leq0\quad\text{and}\quad (L_{\Om}+\la_{\ep}^{\min})[\phi]\geq0.
\end{equation}
It then follows from \eqref{test-pairs}, the definitions of $\la_{p}(-L_{\Om})$ and $\la_{p}'(-L_{\Om})$, and Theorem \ref{thm-principal-vs-generalized} that for each $0<\ep\ll1$,
$$
\la_{\ep}^{\max}\leq \la_{1}(D)\leq\la_{\ep}^{\min},\quad \forall D\in(0,D_{\ep}).
$$
This is exactly \eqref{bound-on-pev-small-dispersal}.

Now, by Theorem \ref{thm-new-sufficient-cond-b} and \eqref{bound-on-pev-small-dispersal}, for each $0<\ep\ll1$ there exists $D_{\ep}\in(0,1)$ such that
$$
-\max_{x\in\ol{\Om}}a_{T}(x)-\ep\leq\la_{1}(D)\leq\min_{x\in\ol{\Om}}\left[D-a_{T}(x)\right],\quad\forall D\in(0,D_{\ep}).
$$
Setting $D\to0^{+}$, we find
$$
-\max_{x\in\ol{\Om}}a_{T}(x)-\ep\leq\liminf_{D\to0^{+}}\la_{1}(D)\leq\limsup_{D\to0^{+}}\la_{1}(D)\leq-\max_{x\in\ol{\Om}}a_{T}(x),\quad\forall0<\ep\ll1,
$$
which leads to \eqref{limit-D-0}.

To show
\begin{equation}\label{limit-D-infty}
\la_{1}(D)\to\infty\quad\text{as}\quad D\to\infty,
\end{equation}
we consider the following operator
$$
L_{\Om}^{0}[\psi]:=\int_{\Om}J(\cdot-y)\psi(y)dy-\psi(x),\quad \psi\in C(\Om),
$$
where $C(\Om)$ is the space of continuous functions on $\Om$. It is known from \cite[Theorem 2.1 and Proposition 3.4]{ShXi15-1} that the principal eigenvalue of $-L_{\Om}^{0}$ exists and is positive. Let $\la^{0}>0$ be the principal eigenvalue of $-L_{\Om}^{0}$, and $\psi^{0}\in C^{++}(\Om)$ be an associated eigenfunction, where $C^{++}(\Om)=\{\psi\in C(\Om):\inf_{\Om}\psi>0\}$. 

Let 
$$
\la_{D}=D\la^{0}-\max_{(t,x)\in[0,T]\times\ol{\Om}}a(t,x). 
$$
We see that
\begin{equation*}
(L_{\Om}+\la_{D})[\psi^{0}]=DL_{\Om}^{0}[\psi^{0}]+a(t,x)\psi^{0}+\la_{D}\psi^{0}=[-D\la^{0}+a(t,x)+\la_{D}]\psi^{0}\leq0.
\end{equation*}
That is, $(\la_{D},\psi^{0})$ is a test pair for $\la_{p}(-L_{\Om})$. It then follows that $\la_{1}(D)=\la_{p}(-L_{\Om})\geq\la_{D}$. Setting $D\to\infty$, we arrive at \eqref{limit-D-infty}.
\end{proof}

Next, we prove the monotonicity of the function $D\mapsto\la_{1}(D)$ for a special class of $a(t,x)$ as in Theorem \ref{thm-effect-dispersal-rate-intro}(2). 

\begin{proof}[{\bf Proof of Theorem \ref{thm-effect-dispersal-rate-intro}(2)}]
We write
$$
L_{\Om}=L_{\Om}^{\CS}+L^{\CT}_{\Om},
$$
where the superscripts $\CS$ and $\CT$ stand for space and time, respectively, and
\begin{equation*}
\begin{split}
L_{\Om}^{\CS}[v](x)&=D\left[\int_{\Om}J(x-y)v(y)dy-v(x)\right]+\beta(x)v(x),\\
L_{\Om}^{\CT}[v](t)&=-v_{t}(t)+\al(t)v(t).
\end{split}
\end{equation*}

Let $(\la_{1}^{\CS}(D),\phi_{D}^{\CS})$ be the principal eigen-pair of $-L_{\Om}^{\CS}$. It is known from \cite[Theorem 2.2 (1)]{ShXi15-1} that $D\mapsto\la_{1}^{\CS}(D)$ is non-decreasing, and increasing if $\beta$ is not a constant function. Let us show that the function $D\mapsto\la_{1}^{\CS}(D)$ is increasing even if $\beta$ is a constant function. From the classical perturbation theory (see e.g. \cite{Ka95}), we know that $D\mapsto(\la_{1}^{\CS}(D),\phi_{D}^{\CS})$ is continuously differentiable, and therefore, we can differentiate the equation $L_{\Om}^{\CS}[\phi_{D}^{\CS}]=\la_{1}^{\CS}(D)\phi_{D}^{\CS}$ with respect to $D$ to find
$$
\int_{\Om}J(x-y)\phi_{D}^{\CS}(y)dy-\phi_{D}^{\CS}(x)+L_{\Om}^{\CS}[\partial_{D}\phi_{D}^{\CS}](x)+\partial_{D}\la_{1}^{\CS}(D)\phi_{D}^{\CS}(x)+\la_{1}^{\CS}(D)\partial_{D}\phi_{D}^{\CS}(x)=0. 
$$
Multiplying the above equation by $\phi_{D}^{\CS}$ and integrating the resulting equation over $\Om$, we find 
\begin{equation}\label{equation-for-partial-D-pv}
\int_{\Om\times\Om}J(x-y)\phi_{D}^{\CS}(x)\phi_{D}^{\CS}(y)dxdy-\int_{\Om}\phi_{D}^{\CS}(x)^{2}dx+\partial_{D}\la_{1}^{\CS}(D)\int_{\Om}\phi_{D}^{\CS}(x)^{2}dx=0,
\end{equation}
where we used the fact that
\begin{equation*}
\begin{split}
&\int_{\Om}L_{\Om}^{\CS}[\partial_{D}\phi_{D}^{\CS}](x)\phi_{D}^{\CS}(x)dx+\int_{\Om}\la_{1}^{\CS}(D)\partial_{D}\phi_{D}^{\CS}(x)\phi_{D}^{\CS}(x)dx\\
&\quad\quad=\int_{\Om}\partial_{D}\phi_{D}^{\CS}(x)\left[L_{\Om}^{\CS}[\phi_{D}^{\CS}](x)+\la_{1}^{\CS}(D)\phi_{D}^{\CS}(x)\right]dx=0.
\end{split}
\end{equation*}
It then follows from \eqref{equation-for-partial-D-pv} that 
$$
\partial_{D}\la_{1}^{\CS}(D)=-\frac{\int_{\Om\times\Om}J(x-y)\phi_{D}^{\CS}(x)\phi_{D}^{\CS}(y)dxdy-\int_{\Om}\phi_{D}^{\CS}(x)^{2}dx}{\int_{\Om}\phi_{D}^{\CS}(x)^{2}dx}.
$$
As 
$$
\int_{\Om\times\Om}J(x-y)\phi_{D}^{\CS}(x)\phi_{D}^{\CS}(y)dxdy<\int_{\Om}\phi_{D}^{\CS}(x)^{2}dx,
$$
we conclude that $\partial_{D}\la_{1}^{\CS}(D)>0$, that is, $D\mapsto\la_{1}^{\CS}(D)$ is increasing.

Now, let us define
$$
\phi_{D}^{\CT}(t)=e^{\int_{0}^{t}[\al(s)-\al_{T}]ds},\quad t\in\R,
$$
where $\al_{T}=\frac{1}{T}\int_{0}^{T}\al(t)dt$. Clearly, $\phi_{D}^{\CT}$ is continuously differentiable, positive and $T$-periodic, and satisfies
$$
L_{\Om}^{\CT}[\phi_{D}^{\CT}]+\al_{T}\phi_{D}^{\CT}=0.
$$

It is then clear that $\la_{1}(D)=\la_{1}^{\CS}(D)+\al_{T}$ is the principal eigenvalue of $-L_{\Om}$ with the principal eigenfunction $\phi_{D}(t,x)=\phi_{D}^{\CS}(x)\phi_{D}^{\CT}(t)$. The result of the lemma then follows from the property of $D\mapsto\la_{1}^{\CS}(D)$.
\end{proof}



Finally, we study the effects of the dispersal rate $D$ on the positive $T$-periodic solutions of \eqref{main-eqn-Dirichlet-simple} by proving Theorem \ref{thm-effect-dispersal-rate-intro}(3)(4). 

\begin{proof}[{\bf Proof of Theorem \ref{thm-effect-dispersal-rate-intro}(3)(4)}]
(3) By Theorem \ref{thm-effect-dispersal-rate-intro}(1), $\la_{1}(D)<0$ for all $0<D\ll1$ and $\la_{1}(D)>0$ for all $D\gg1$. The result then follows from Theorem \ref{thm-persistence-criterion-b}.

(4) We refer the reader to the proof of Lemma \ref{lem-sol-limit-eqn} and Theorem \ref{thm-scaling-limit-positive-sol}, where similar problems are treated.
\end{proof}



\section{Effects of the dispersal range and scaling limits}\label{sec-scaling-limits}

In this section, we study the effects of dispersal range characterized by $\si$ on the principal eigenvalue and the positive $T$-periodic solution associated to \eqref{main-eqn} and \eqref{main-eqn-R}. In particular, we prove Theorem \ref{thm-scaling-limit-eigenvalue} and Theorem \ref{thm-scaling-limits-introduction}.

Let us start with the consideration of the following operator
\begin{equation}\label{main-eqn-linear-variation}
L_{\Om,m,\si}[v](t,x):=\M_{\O,m,\sigma}[v](t,x)+a(t,x)v(t,x),\quad (t,x)\in\R\times\ol{\Om}
\end{equation}
associated to the linearization of \eqref{main-eqn} at $u\equiv0$, where
$$
\M_{\O,m,\sigma}[v](t,x):=-v_{t}(t,x)+\frac{D}{\si^{m}}\left[\int_{\Om}J_{\si}(x-y)v(t,y)dy-v(t,x)\right],\quad (t,x)\in\R\times\ol{\Om}.
$$

Note that for each fixed $\si>0$ and $m\geq0$, introducing the new dispersal rate $\tilde{D}:=\frac{D}{\si^{m}}$ and the new dispersal kernel $\tilde{J}:=J_{\si}$, we are completely in the situation of the operator \eqref{main-eqn-linear}. Therefore, studies and results in the previous sections apply here. In particular, we have the following proposition collecting some basic facts about $\la_{1}(-L_{\Om,m,\si})$, $\la_{p}(-L_{\Om,m,\si})$ and $\la_{p}'(-L_{\Om,m,\si})$.

\begin{proposition}\label{prop-principal-e-variation-1}
Suppose {\bf(H1)} and {\bf(H2)}. Let $m\geq0$ and $\si>0$.
\begin{enumerate}
\item The principal spectrum point $\la_{1}(-L_{\Om,m,\si})$ is the principal eigenvalue if and only if  
$$
\quad\la_{1}(-L_{\Om,m,\si})<\frac{D}{\si^{m}}-\max_{x\in\ol{\Om}}a_{T}(x).
$$ 
Moreover, when $\la_{1}(-L_{\Om,m,\si})$ is the principal eigenvalue of $-L_{\Om,m,\si}$, it is geometrically simple and has an associated eigenfunction in $\XX_{\Om}^{++}$.

\item If \eqref{newcond1} holds, then $\la_{1}(-L_{\Om,m,\si})$ is the principal eigenvalue of $-L_{\Om,m,\si}$ and there holds
$$
\la_{p}(-L_{\Om,m,\si})=\lambda_p'(-L_{\Om,m,\si})=\la_{1}(-L_{\Om,m,\si}).
$$

\item  $\la_{1}(-\M_{\O,m,\sigma}-a)$ is Lipschitz continuous with respect to $a(t,x)$. More precisely,
$$
|\la_{1}(-\M_{\O,m,\sigma}-a)-\la_{1}(-\M_{\O,m,\sigma}-b)|\leq \sup_{t\in[0,T]}\|a(t,\cdot)-b(t,\cdot)\|_{\infty}.
$$

\item If $\O_1\subset\O_2$, then $\lambda_p(-L_{\O_1,m,\sigma})\geq \lambda_p(-L_{\O_2,m,\sigma})$. If, in addition, \eqref{newcond1} holds in $\O_1$ and $\O_2$, then
$$
|\lambda_p(-L_{\O_2,m,\sigma})-\lambda_p(-L_{\O_1,m,\sigma})|\leq C_0|\O_2 \setminus \O_1|,
$$
where $C_0>0$ depending on $m$, $\si$, $J$ and the associated principal eigenfunction of $\lambda_p(-L_{\O_2,m,\sigma})$.

\item If \eqref{newcond1} holds, then the function $\si\mapsto\la_{1}(-L_{\Om,m,\si})$ is continuous.

\end{enumerate}
\end{proposition}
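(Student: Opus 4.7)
The plan is to reduce each assertion to an already-established result, exploiting that $L_{\O,m,\si}$ is precisely the operator $L_\O$ of \eqref{main-eqn-linear} with modified dispersal rate $\tilde D=D/\si^m$ and modified kernel $\tilde J=J_\si$, while $a(t,x)$, $a_T(x)$ and $\O$ are untouched. Part~(1) is then immediate from Theorem~\ref{prop-principal-e} under this substitution; the threshold now reads $\la_{*}=\tilde D-\max_{\ol\O}a_T=D/\si^m-\max_{\ol\O}a_T$. For part~(2), condition~\eqref{newcond1} depends only on $a_T$ and $\O$ and therefore survives the reduction, so Theorem~\ref{thm-pe-introduction} applies verbatim and yields both the existence of the principal eigenvalue and the sup-inf characterizations.

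For part~(3), I will use the formula $\la_{1}(-\M_{\O,m,\si}-a)=-\ln r(\Phi_a(T,0))/T$ (Proposition~3.10 of \cite{RS12}, already invoked in the proof of Theorem~\ref{thm-persistence-criterion-b}), where $\Phi_a$ is the evolution semigroup associated to $\partial_t u=\M_{\O,m,\si}u+au$. Given $a,b$ with $\ep:=\sup_{t\in[0,T]}\|a(t,\cdot)-b(t,\cdot)\|_\infty$, for every $u_0\in X_\O^{+}$ the functions $t\mapsto e^{\ep t}\Phi_a(t,0)u_0$ and $t\mapsto e^{-\ep t}\Phi_a(t,0)u_0$ are respectively a super- and a sub-solution of the $b$-equation starting from $u_0$. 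Comparison gives
\[
e^{-\ep T}\Phi_a(T,0)u_0\le\Phi_b(T,0)u_0\le e^{\ep T}\Phi_a(T,0)u_0\quad\text{on}\ X_\O^{+},
\]
and since $\Phi_a(T,0),\Phi_b(T,0)$ preserve the positive cone, the spectral radii satisfy $e^{-\ep T}r(\Phi_a(T,0))\le r(\Phi_b(T,0))\le e^{\ep T}r(\Phi_a(T,0))$, which is precisely the claimed Lipschitz bound.

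For part~(4), monotonicity follows directly from the sup-inf characterization: if $(\la,\phi)$ is a test pair for $\la_{p}(-L_{\O_2,m,\si})$, then on $\R\times\ol{\O_1}$
\[
(L_{\O_1,m,\si}+\la)\phi=(L_{\O_2,m,\si}+\la)\phi-\frac{D}{\si^m}\int_{\O_2\setminus\O_1}J_\si(\cdot-y)\phi(\cdot,y)\,dy\le 0,
\]
so $\phi|_{\R\times\ol{\O_1}}\in\XX_{\O_1}^{++}$ witnesses $\la_{p}(-L_{\O_1,m,\si})\ge\la$, and taking the sup gives the monotonicity. For the quantitative bound, condition~\eqref{newcond1} on $\O_2$ together with part~(2) supplies a principal eigenfunction $\phi_2\in\XX_{\O_2}^{++}$; the same restriction produces the same boundary-layer defect, and the choice
\[
\mu=\la_{1}(-L_{\O_2,m,\si})+\frac{D\,\|J_\si\|_\infty\,\max\phi_2}{\si^m\,\min\phi_2}\,|\O_2\setminus\O_1|
\]
makes $(L_{\O_1,m,\si}+\mu)\phi_2\ge 0$, so $\la_p'(-L_{\O_1,m,\si})\le\mu$; since \eqref{newcond1} also holds on $\O_1$, Theorem~\ref{thm-pe-introduction} gives $\la_p'=\la_{1}$ on $\O_1$, yielding the Lipschitz bound with the stated constant $C_0$.

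Finally, for part~(5), fix $\si_0>0$ and let $\phi_0\in\XX_\O^{++}$ be a principal eigenfunction at $\si_0$, provided by part~(2). Write
\[
L_{\O,m,\si}[\phi_0]=L_{\O,m,\si_0}[\phi_0]+E_\si=-\la_{1}(\si_0)\phi_0+E_\si.
\]
The error $E_\si$ combines the scalar factor $\si^{-m}-\si_0^{-m}$, clearly continuous at $\si_0>0$, with the kernel difference $\int_\O[J_\si(x-y)-J_{\si_0}(x-y)]\phi_0(t,y)\,dy$, which tends to zero uniformly in $(t,x)$ by the change of variables $z=(x-y)/\si$ combined with the uniform continuity of $\phi_0$ on $\R\times\ol\O$. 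Setting $\de(\si):=\|E_\si\|_\infty/\min\phi_0$, we obtain $\de(\si)\to 0$ as $\si\to\si_0$ and
\[
(L_{\O,m,\si}+\la_{1}(\si_0)-\de(\si))[\phi_0]\le 0,\qquad (L_{\O,m,\si}+\la_{1}(\si_0)+\de(\si))[\phi_0]\ge 0,
\]
so $\phi_0$ is a test function for both $\la_{p}(-L_{\O,m,\si})$ and $\la_p'(-L_{\O,m,\si})$, whence Theorem~\ref{thm-pe-introduction} gives $|\la_{1}(\si)-\la_{1}(\si_0)|\le\de(\si)$. The main subtlety, and the hardest point of the plan, is that the prefactor $\si^{-m}$ and the rescaled kernel $J_\si$ degenerate as $\si\to 0^{+}$, so this argument yields continuity only at each \emph{fixed} $\si_0>0$; this is, however, exactly what is needed for the threshold discussion in Remark~\ref{threshold}.
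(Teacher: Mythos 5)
Your proposal is correct, and parts (1), (2), and the monotonicity claim in (4) follow exactly the paper's reduction and test-pair argument. For the remaining parts you take a genuinely different, more self-contained route. For (3) the paper simply cites Proposition~3.3 of \cite{ShXi15-2}; you instead combine the identity $\la_1=-\tfrac{1}{T}\ln r(\Phi(T,0))$ from \cite[Proposition 3.10]{RS12} with the super-/sub-solution comparison $e^{-\ep T}\Phi_a(T,0)\le\Phi_b(T,0)\le e^{\ep T}\Phi_a(T,0)$ on the positive cone and the cone-monotonicity of the spectral radius. This makes the Lipschitz constant $1$ transparent and avoids an external citation. For the quantitative estimate in (4) the paper refers to \cite[Proposition 2(iii)]{C1}; you give the explicit computation, using the principal eigenfunction $\phi_2$ on $\O_2$ as a test function for $\la_p'(-L_{\O_1,m,\si})$ and bounding the boundary defect $\frac{D}{\si^m}\int_{\O_2\setminus\O_1}J_\si(\cdot-y)\phi_2(t,y)\,dy$ by $\frac{D}{\si^m}\|J_\si\|_\infty\max\phi_2\,|\O_2\setminus\O_1|$, which produces the stated constant $C_0$ directly. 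For (5) the paper writes $L_{\O,m,\si}=L_{\O,m,\si_0}+U_{\si,\si_0}$ and invokes Kato's perturbation theory of isolated eigenvalues; you instead verify that the fixed eigenfunction $\phi_0$ at $\si_0$ serves as a test pair for both $\la_p(-L_{\O,m,\si})$ and $\la_p'(-L_{\O,m,\si})$ up to the error $\de(\si)=\|E_\si\|_\infty/\min\phi_0\to0$, and conclude $|\la_1(\si)-\la_1(\si_0)|\le\de(\si)$ from the sup-inf characterizations. Your route for (5) is arguably cleaner: it sidesteps any worry about whether $\la_1$ has finite algebraic multiplicity, which the perturbation-theory route implicitly requires; the only price is that it yields continuity rather than the differentiability one gets for free from \cite{Ka95}, but continuity is all that is claimed. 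One minor point of rigor: in justifying $E_\si\to0$ uniformly, the cleanest estimate is $\bigl|\int_\O[J_\si(x-y)-J_{\si_0}(x-y)]\phi_0(t,y)\,dy\bigr|\le\|\phi_0\|_\infty\|J_\si-J_{\si_0}\|_{L^1(\R^N)}$ together with $L^1$-continuity of dilation at $\si_0>0$, rather than appealing to the change of variables and uniform continuity of $\phi_0$ as you phrase it; your conclusion is right but the stated justification is slightly looser than the estimate warrants.
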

\begin{proof}
(1) It is a direct consequence of Theorem \ref{prop-principal-e}.  

(2) It follows  from Theorem \ref{thm-new-sufficient-cond-b} and Theorem \ref{thm-principal-vs-generalized}.  

(3) It can be found in Proposition 3.3 in \cite{ShXi15-2}. 

(4) Let $(\la,\psi)$ be a test pair for $\la_{p}(-L_{\Omega_2,m,\si})$, that is, $(\la,\psi)\in\R\times\XX_{\Om_{2}}^{++}$ satisfies $(L_{\Om_2,m,\si}+\la)[\psi]\leq0$ in $\R\times\ol\O_2$. Define 
$$
\psi_{\ol{\O}_1}(t,x)=\psi(t,x),\quad (t,x)\in\R\times\ol{\O}_1\subset\R\times\ol{\Om}_{2}. 
$$
Then, $\psi_{\ol{\O}_1}\in\XX_{\Om_{1}}^{++}$. Moreover, for any $(t,x)\in\R\times\ol{\O}_1$
\begin{equation*}
\begin{split}
(L_{\O_1,m,\si}+\la)[\psi_{\ol{\O}_1}](t,x)&=-\partial_{t}\psi_{\ol{\O}_1}(t,x)+\frac{D}{\si^{m}}\left[\int_{\O_1}J_{\si}(x-y)\psi_{\ol{\O}_1}(t,y)dy-\psi_{\ol{\O}_1}(t,x)\right]\\
&\quad+a(t,x)\psi_{\ol{\O}_1}(t,x)+\la\psi_{\ol{\O}_1}(t,x)\\
&\leq-\psi_{t}(t,x)+\frac{D}{\si^{m}}\left[\int_{\Om_2}J_{\si}(x-y)\psi(t,y)dy-\psi(t,x)\right]\\
&\quad+a(t,x)\psi(t,x)+\la\psi(t,x)\\
&=(L_{\Om_2,m,\si}+\la)[\psi](t,x)\leq0.
\end{split}
\end{equation*}
That is, $(\la,\psi_{\ol{\O}_1})$ is a test pair for $\la_{p}(-L_{\O_1,m,\si})$, and hence, $\la\leq\la_{p}(-L_{\O_1,m,\si})$. Taking the supremum of all such possible $\la$, we arrive at $\la_{p}(-L_{\Omega_2,m,\si})\leq\la_{p}(-L_{\O_1,m,\si})$. The later statement can be proven by similar arguments as in the proof of \cite[Proposition 2 (iii)]{C1} under the help the equalities $\lambda_p(-L_{\Om_{i},m,\si})=\lambda_p'(-L_{\Om_{i},m,\si})$, $i=1,2$, which follows from Theorem \ref{thm-principal-vs-generalized}.

(5) It follows from classical perturbation theory of isolated eigenvalues (see e.g. \cite{Ka95}). In fact, for each fixed $\si_{0}>0$, we can write $L_{\Om,m,\si}$ as
$$
L_{\Om,m,\si}=L_{\Om,m,\si_{0}}+U_{\si,\si_{0}},
$$
where
$$
U_{\si,\si_{0}}[v](t,x)=\frac{D}{\si^{m}}\left[\int_{\Om}J_{\si}(x-y)v(t,y)dy-v(t,x)\right]-\frac{D}{\si_{0}^{m}}\left[\int_{\Om}J_{\si_{0}}(x-y)v(t,y)dy-v(t,x)\right].
$$
The result then follows from the facts that $U_{\si,\si_{0}}$ is bounded and linear, and $U_{\si,\si_{0}}\to0$ in norm as $\si\to\si_{0}$.
\end{proof}



Next, we prove Theorem \ref{thm-scaling-limit-eigenvalue}  concerning the scaling limits of principal eigenvalues.

%
%
%
%
\begin{proof}[{\bf Proof of Theorem \ref{thm-scaling-limit-eigenvalue}}]
(1) We here only prove the result in the case $m>0$; the $m=0$ case can be proven similarly. Thus, we assume $m>0$.

By Proposition \ref{prop-principal-e-variation-1}, we find $\la_{1}(-L_{\Om,m,\si})<\frac{D}{\si^{m}}-\max_{x\in\ol{\Om}}a_{T}(x)$, which implies 
$$
\limsup_{\si\to\infty}\la_{1}(-L_{\Om,m,\si})\leq-\max_{x\in\ol{\Om}}a_{T}(x).
$$
It remains to show that 
\begin{equation}\label{to-prove-03-11-17}
\liminf_{\si\to\infty}\la_{1}(-L_{\Om,m,\si})\geq-\max_{x\in\ol{\Om}}a_{T}(x).
\end{equation}
To do so, let us fix some constant $\phi_{0}>0$. One verifies that for each $x\in\ol{\Om}$, the function
\begin{equation}\label{special-test-function-000001}
\phi(t,x)=e^{\int_{0}^{t}[a(s,x)-a_{T}(x)]ds}\phi_{0},\quad t\in\R
\end{equation}
is a positive $T$-periodic solution of the ODE $v_{t}=a(t,x)v-a_{T}(x)v$. Clearly, $\phi\in\XX_{\Om}^{++}$ and we may choose $\phi_{0}$ such that $\sup_{\R\times\ol\O}\phi(t,x)=1$. For any $\ep>0$, we see that
\begin{equation}
\begin{split}
&(L_{\Om,m,\si}-\max_{z\in\ol\Om}a_{T}(z)-\ep)[\phi](t,x)\\
&\quad\quad=-\phi_{t}(t,x)+\frac{D}{\si^{m}}\left[\int_{\Om}J_{\si}(x-y)\phi(t,y)dy-\phi(t,x)\right]+[a(t,x)-\max_{z\in\ol\Om}a_{T}(z)-\ep]\phi(t,x)\\
&\quad\quad\leq\frac{D}{\si^{m}}\left[\int_{\Om}J_{\si}(x-y)\phi(t,y)dy-\phi(t,x)\right]-\ep\phi(t,x).
\end{split}\label{06.02.1}
\end{equation}
As $\min_{(t,x)\in\R\times\ol{\Om}}\phi(t,x)>0$ and
$$
\left\|\frac{D}{\si^{m}}\left[\int_{\Om}J_{\si}(\cdot-y)\phi(t,y)dy-\phi(t,\cdot)\right]\right\|_\infty\to0\quad\text{as}\quad\si\to\infty,
$$
there is $\si_{\ep}>0$ such that $(L_{\Om,m,\si}-\max_{z\in\ol\Om}a_{T}(z)-\ep)[\phi]\leq0$ for all $\si\geq\si_{\ep}$, which implies that
$$
\la_{1}(-L_{\Om,m,\si})=\la_{p}(-L_{\Om,m,\si})\geq-\max_{z\in\ol\Om}a_{T}(z)-\ep,\quad\si\geq\si_{\ep}.
$$
The arbitrariness of $\epsilon$ then yields \eqref{to-prove-03-11-17}. Hence, 
$$
\lim_{\si\to\infty}\la_{1}(-L_{\Om,m,\si})=-\max_{x\in\ol{\Om}}a_{T}(x).
$$

(2) Let $\phi$ be as in \eqref{special-test-function-000001} and  $\tilde{\phi}$ be a twice continuously differentiable, positive and $T$-periodic function on $\R\times\R^{N}$ such that $\tilde{\phi}(t,x)=\phi(t,x)$ for $(t,x)\in\R\times\ol\Om$. For any $\ep>0$, similar arguments as in \eqref{06.02.1} lead to
\begin{equation*} 
\begin{split}
&\left(L_{\Om,m,\si}-\max_{z\in\ol\Om}a_{T}(z)-\ep\right)[\phi](t,x)\\
&\quad\quad\leq \frac{D}{\si^{m}}\left[\int_{\Om}J_{\si}(x-y)\phi(t,y)dy-\phi(t,x)\right]-\ep\phi(t,x)\\
&\quad\quad\leq \frac{D}{\si^{m}}\left[\int_{\R^{N}}J_{\si}(x-y)\tilde{\phi}(t,y)dy-\tilde{\phi}(t,x)\right]-\ep\phi(t,x),\quad \quad(t,x)\in\R\times\ol{\Om}
\end{split}
\end{equation*}
It then follows from Taylor's expansion that
\begin{equation*}
\begin{split}
\frac{1}{\si^{m}}\left[\int_{\R^{N}}J_{\si}(x-y)\tilde{\phi}(t,y)dy-\tilde{\phi}(t,x)\right]&=\frac{1}{\si^{m}}\left[\int_{\R^{N}}J(z)\tilde{\phi}(t,x+\si z)dz-\tilde{\phi}(t,x)\right]\\
&=\si^{2-m}\int_{\R^{N}}J(z)\frac{z_{N}^{2}}{2}dz\De\tilde{\phi}(t,x)+O(\si^{\al})
\end{split}
\end{equation*}
for some $\al>0$. In particular, 
$$
\frac{D}{\si^{m}}\left[\int_{\R^{N}}J_{\si}(x-y)\tilde{\phi}(t,y)dy-\tilde{\phi}(t,x)\right]\to0\quad \text{as}\quad\si\to0\quad\text{uniformly in}\,\,(t,x)\in\R\times\ol{\Om}.
$$
As $\min_{(t,x)\in\R\times\ol{\Om}}\phi(t,x)>0$, for any $\epsilon>0$, there exists $\si_{\ep}>0$ such that   
$$
(L_{\Om,m,\si}-\max_{z\in\ol\Om}a_{T}(z)-\ep)[\phi]\leq0,\quad (t,x)\in\R\times\ol{\Om}
$$ 
for all $\si\geq\si_{\ep}$. By the definition of $\lambda_p(-L_{\Om,m,\si})$ and Proposition \ref{prop-principal-e-variation-1}(2), we obtain
\begin{equation}\label{17.5.1}
\liminf_{\si\to0^{+}}\la_{1}(-L_{\Om,m,\si})=\liminf_{\si\to0^{+}}\la_{p}(-L_{\Om,m,\si})\geq-\max_{x\in\ol\Om}a_{T}(x).
\end{equation}

We show the reverse inequality, namely, 
\begin{equation}\label{17.5.1-03-12-17}
\limsup_{\si\to0^{+}}\la_{1}(-L_{\Om,m,\si})\leq-\max_{x\in\ol\Om}a_{T}(x).
\end{equation}
For any $\ep>0$, there exists an open ball $B_\epsilon\subset\Om$ of radius $\ep$ such that 
$$
a_T(x)+\epsilon>\max_{x\in\ol\O} a_T(x),\quad x\in B_\epsilon. 
$$
Let $\tilde{\phi}_\epsilon:\R\times\R^{N}\to[0,\infty)$ be a continuous and $T$-periodic function and satisfies 
\begin{equation*}
\begin{split}
\tilde{\phi}_\epsilon(t,x)&=\phi(t,x),\quad(t,x)\in\R\times\ol B_\epsilon,\\ \tilde{\phi}_\epsilon(t,x)&=0,\quad \quad(t,x)\in \R\times(\R^{N}\bs B_{2\epsilon})\quad\text{and}\\ \sup_{\R\times\R^N}\tilde{\phi}_\epsilon(t,x)&\leq \sup_{\R\times\R^N}{\phi}_\epsilon(t,x)=1. 
\end{split}
\end{equation*}
Obviously, $\tilde{\phi}_\epsilon$ is also in $C^2(t,\cdot)$ for each $t\in\R$. We see
\begin{equation*}
\begin{split}
\mathcal{I}_{\sigma,m}&=\frac{1}{\sigma^m}\int_{\R^{N}}J_{\si}(x-y)(\tilde{\phi}_\epsilon(t,y)-\tilde{\phi}_\epsilon(t,x))dy\\
&=\frac{1}{\sigma^m}\int_{\R^N}J(z)(\tilde\phi_\epsilon(t,x+\sigma z)-\tilde{\phi}_\epsilon(t,x))dz\\
&=\sigma^{2-m}\int_{\R^N}J(z)\frac{z_N^2}{2} dz\Delta\tilde{\phi}_\epsilon(t,x)+O(\sigma^\alpha).
\end{split}
\end{equation*}
By assumptions on $J$, there holds $\int_{\R^N}J(z)\frac{z_N^2}{2} dz<\infty$. We then deduce that, for $m\in[0,2)$,  
$$
\mathcal{I}_{\sigma,m}\to0\,\,\quad\text{uniformly in}\,\, (t,x)\in\R\times\ol\O\quad\text{as} \quad\si\to0^{+}.
$$ 

Choosing $\epsilon=\sigma^k$, with $k=\frac{m+2N}{N}$, we have for each for $x\in B_\epsilon$,
\begin{equation*}
\begin{split}
&\left(L_{B_\epsilon,m,\si}-\max_{z\in\ol{\Om}}a_{T}(z)+\ep+\ln(\ep+1)\right)[\phi](t,x)\\
&\quad\quad=-\phi_{t}(t,x)+\frac{D}{\si^{m}}\left[\int_{B_\epsilon}J_{\si}(x-y)\phi(t,y)dy-\phi(t,x)\right]\\
&\quad\quad\quad+\left[a(t,x)-\max_{z\in\ol{\Om}}a_{T}(z)+\ep+\ln(\ep+1)\right]\phi(t,x)\\
&\quad\quad\geq\frac{D}{\si^{m}}\left[\int_{B_\epsilon}J_{\si}(x-y)\phi(t,y)dy-\phi(t,x)\right]+\ln(\ep+1)\phi(t,x).\\
&\quad\quad =\frac{D}{\sigma^m}\left[\int_{\R^N}J_{\si}(x-y)\tilde{\phi}_\epsilon(t,y)dy- \tilde{\phi}_\epsilon(t,x)- \int_{B_{2\epsilon}\setminus B_\epsilon}J_{\si}(x-y)\tilde{\phi}_\epsilon(t,y)dy\right]\\
&\quad\quad\quad+\ln(\ep+1)\phi(t,x)\\
&\quad\quad =\frac{D}{\sigma^{m-2}}\int_{\R^N}J(z)\frac{z_N^2}{2} dz\Delta\tilde{\phi}(t,x)+O(\sigma^\alpha)-\frac{D}{\sigma^{m+N}}\int_{B_{2\epsilon}\setminus B_\epsilon}J(x-y)\tilde{\phi}_\epsilon(t,y)dy\\
&\quad\quad\quad+\ln(\ep+1)\phi(t,x)\\
&\quad\quad=\mathcal{I}_{\sigma,m}^1+\mathcal{I}_{\sigma,m}^2+\mathcal{I}_{\sigma,m}^3 +O(\sigma^\alpha),
\end{split}
\end{equation*}
where
\begin{equation*}
\begin{split}
\mathcal{I}_{\sigma,m}^1&=\frac{D}{\sigma^{m-2}}\int_{\R^N}J(z)\frac{z_N^2}{2} dz\Delta\tilde{\phi}(t,x)+O(\sigma^\alpha),\\
\mathcal{I}_{\sigma,m}^2&=-\frac{D}{\sigma^{m+N}}\int_{B_{2\epsilon}\setminus B_\epsilon}J(x-y)\tilde{\phi}_\epsilon(t,y)dy,\\
\mathcal{I}_{\sigma,m}^3&=\ln(\ep+1)\phi(t,x).
\end{split}
\end{equation*}

Since $\min_{(t,x)\in\R\times\ol{\Om}}\phi(t,x)>0$, one has $\min_{(t,x)\in\R\times\ol B_\epsilon}\phi(t,x)>0$ uniformly in $\epsilon$. Clearly, the following estimates hold:
$$|\mathcal{I}_{\sigma,m}^1|\leq C_1\sigma^{2-m},\quad|\mathcal{I}_{\sigma,m}^2|\leq \frac{C_2\sigma^{kN}}{\sigma^{m+N}}\quad\text{and}\quad |\mathcal{I}_{\sigma,m}^3|\geq C_3\ln (\sigma^k+1).$$
For any $\alpha,\beta>0$, one has 
$$\lim_{\sigma\to0}\frac{\sigma^\alpha}{\ln(\sigma^\beta+1)}=0.$$
Hence,  for $0<\sigma\ll1$, there holds
$$
\left(L_{\Om,m,\si}-\max_{z\in\ol\Om}a_{T}(z)+\sigma^k+\ln(\sigma^k+1)\right)[\phi]\geq0\quad\text{in}\quad \R\times B_{\sigma^k}.
$$  
It then follows from the definition of $\lambda_p'(-L_{\Om,m,\si})$ and Proposition \ref{prop-principal-e-variation-1}(2) that
$$
\la_{1}(-L_{B_{\sigma^k},m,\si})=\la_{p}'(-L_{B_{\sigma^k},m,\si})\leq -\max_{x\in\ol\O} a_T(x)+\sigma^k+\ln(\sigma^k+1).
$$ 
By Proposition \ref{prop-principal-e-variation-1}(2)(4),
\begin{equation*}\label{principal-compare-01010}
\la_{1}(-L_{\Omega,m,\si})\leq\la_{1}(-L_{B_{\sigma^k},m,\si}), 
\end{equation*}
which yields 
$$
\la_{1}(-L_{\Omega,m,\si})\leq  -\max_{x\in\ol\O} a_T(x)+\sigma^k+\ln(\sigma^k+1) 
$$
for $0<\sigma\ll1$. This proves \eqref{17.5.1-03-12-17}.

Combining \eqref{17.5.1} and \eqref{17.5.1-03-12-17}, we conclude the expected limit
$$
\lim_{\sigma\to0^{+}}\la_{1}(-L_{\Omega,m,\si})=  -\max_{x\in\ol\O} a_T(x).
$$

(3) For $\sigma_1\geq\sigma_2$, we will show $\la_{1}(-L_{\Om,0,\si_1})\geq \la_{1}(-L_{\Om,0,\si_2})$. It is equivalent to show 
$$
\la_{p}(-L_{\Om,0,\si_1})\geq \la_{p}(-L_{\Om,0,\si_2}).
$$ 
Set $\O_\sigma=\frac{1}{\sigma}\O$, $a_\sigma(\omega)=a(\sigma\omega)$ for $\omega\in\Omega_\sigma$. One sees that
$$\la_{p}(-L_{\Om,0,\si})=\lambda_p(-\M_{\O_\sigma,0,1}-a_\sigma).$$
Therefore, we need to show that 
$$
\lambda_p(-\M_{\O_{\sigma_1},0,1}-a_{\sigma_1})\geq \lambda_p(-\M_{\O_{\sigma_2},0,1}-a_{\sigma_2}).
$$ 
To do so, it suffices to prove the inequality
$$
\lambda_p(-\M_{\O_{\sigma_1},0,1}-a_{\sigma_1})\geq\lambda,
$$
for any $\lambda<\lambda_p(-\M_{\O_{\sigma_2},0,1}-a_{\sigma_2})$.
 
Fix such a $\lambda$. By Proposition \ref{prop-principal-e-variation-1}, there exists  a positive function $\phi(t,x)\in \XX^{++}_{\O_2}$ such that
$$\M_{\O_{\sigma_2},0,1}[\phi]+(a_{\sigma_2}(t,x)+\lambda)[\phi]\leq0 $$
Since $\O$ contains the origin, one has $\O_{\sigma_1}\subset\O_{\sigma_2}$. Moreover $a_{\sigma_1}(t,x)\leq a_{\sigma_2}(t,x)$ for all $(t,x)\in \R\times\O_{\sigma_1}$. An easy computation yields
$$\M_{\O_{\sigma_1},0,1}[\phi]+(a_{\sigma_1}(t,x)+\lambda)[\phi]\leq \M_{\O_{\sigma_2},0,1}[\phi]+(a_{\sigma_2}(t,x)+\lambda)[\phi]\leq0\quad (t,x)\in \R\times\O_{\sigma_1}. $$
This implies $\lambda_p(-\M_{\O_{\sigma_1},0,1}-a_{\sigma_1})\geq\lambda.$

The proof is complete.
\end{proof}

Finally, we study the scaling limits of the positive $T$-periodic solution of \eqref{main-eqn-R} and we prove Theorem \ref{thm-scaling-limits-introduction}. We need the following lemma.

\begin{lemma}\label{lem-sol-limit-eqn}
Suppose {\bf(H2)}. If $\min_{x\in\ol{\Om}}a_{T}(x)>0$, then for each $x\in\ol{\Om}$, the equation $$v_{t}=f(t,x,v)$$ has a unique positive and $T$-periodic solution, denoted by $v^{*}(t,x)$, that is continuous in $x$.
\end{lemma}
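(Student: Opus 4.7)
The plan is to analyze the scalar $T$-periodic ODE $v_{t}=f(t,x,v)$ at each fixed $x\in\ol{\Om}$, construct a unique positive $T$-periodic solution via a sub-/super-solution argument, and then deduce continuity in $x$ from continuous dependence on parameters.

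First I would build ordered sub- and super-solutions that are uniform in $x$. From {\bf(H2)}-(4) together with the monotonicity of $s\mapsto f(t,x,s)/s$, one obtains a constant $M>0$, independent of $x$, satisfying $f(t,x,M)\leq 0$ on $\R\times\ol{\Om}$, so $v_{+}\equiv M$ is a super-solution. For the sub-solution, set
\begin{equation*}
\phi(t,x)=\exp\left(\int_{0}^{t}[a(s,x)-a_{T}(x)]\,ds\right),
\end{equation*}
which is strictly positive, continuous in $x$, $T$-periodic in $t$, and satisfies $\phi_{t}=[a(t,x)-a_{T}(x)]\phi$. A direct computation gives
\begin{equation*}
(c\phi)_{t}-f(t,x,c\phi)=c\phi\left[a(t,x)-a_{T}(x)-\frac{f(t,x,c\phi)}{c\phi}\right]\longrightarrow -c\phi\,a_{T}(x)<0
\end{equation*}
as $c\to 0^{+}$, uniformly in $(t,x)\in\R\times\ol{\Om}$, using $f(t,x,s)/s\to a(t,x)$ as $s\to 0^{+}$. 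Since $\min_{x\in\ol{\Om}}a_{T}(x)>0$, a single $c>0$ can be chosen so that $c\phi(\cdot,x)$ is a sub-solution simultaneously for every $x\in\ol{\Om}$.

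Existence of a $T$-periodic solution $v^{*}(\cdot,x)$ in the order interval $[c\phi(\cdot,x),M]$ then follows by monotone iteration of the (monotone) Poincar\'e map $P_{x}\colon w\mapsto v(T;w)$, whose positive fixed points are exactly the positive $T$-periodic solutions of the ODE. For uniqueness, if $v_{1},v_{2}$ are two positive $T$-periodic solutions, the ratio $\rho(t):=v_{1}(t)/v_{2}(t)$ satisfies
\begin{equation*}
\rho_{t}=\rho\left[\frac{f(t,x,v_{1})}{v_{1}}-\frac{f(t,x,v_{2})}{v_{2}}\right];
\end{equation*}
at any minimum $t_{*}$ of the $T$-periodic function $\rho$, the strict monotonicity of $s\mapsto f(t_{*},x,s)/s$ forces $v_{1}(t_{*})=v_{2}(t_{*})$, i.e.\ $\rho(t_{*})=1$, hence $\rho\geq 1$; the symmetric argument yields $\rho\equiv 1$.

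Finally, for continuity in $x$, fix $x_{n}\to x$ in $\ol{\Om}$. The sequence $v^{*}(0,x_{n})\in[c\phi(0,x_{n}),M]$ is bounded below by a positive constant independent of $n$, thanks to the uniformly constructed sub-solution. Extract a subsequence with $v^{*}(0,x_{n})\to w^{*}>0$. Standard continuous dependence on parameters applied to $v_{t}=f(t,x_{n},v)$ shows that $v^{*}(\cdot,x_{n})$ converges uniformly on $[0,T]$ to the solution of $v_{t}=f(t,x,v)$ with initial datum $w^{*}$; this limit is therefore $T$-periodic and positive, so by uniqueness $w^{*}=v^{*}(0,x)$. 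Since every subsequential limit is the same, the full sequence converges, and joint continuity of $v^{*}(t,x)$ in $(t,x)$ follows from the same continuous-dependence estimate. The main obstacle is precisely the uniform positive lower bound on $v^{*}(0,x)$: without it the candidate limit could collapse to the unstable equilibrium $w^{*}=0$, and this is exactly where the strengthened hypothesis $\min_{x\in\ol{\Om}}a_{T}(x)>0$ (as opposed to mere pointwise positivity) is indispensable.
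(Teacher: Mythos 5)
Your argument is correct and is genuinely more informative than what the paper actually provides: the paper's entire proof of this lemma is a one-line citation to Coleman's 1979 result on nonautonomous logistic equations, which covers the existence and uniqueness for each fixed $x$ but leaves the asserted continuity in $x$ entirely unaddressed. You supply a self-contained argument with the same underlying mechanism (sub-/super-solutions and a monotone Poincar\'e map) that the cited reference relies on, and in addition you correctly isolate the missing ingredient for continuity, namely the $x$-uniform positive lower bound $v^{*}(0,x)\geq c>0$ coming from the uniformly small sub-solution $c\phi$, together with continuous dependence and uniqueness to identify subsequential limits. One small point you implicitly use: the uniformity in $(t,x)$ of the limit $f(t,x,s)/s\to a(t,x)$ as $s\to 0^{+}$ requires $f_{s}$ to be jointly continuous (or at least equicontinuous near $s=0$ uniformly in $(t,x)$), which is a hair more than the literal reading of \textbf{(H2)}-(1) but is clearly intended and routinely assumed in this setting; it would be worth a sentence to flag it. With that caveat, the proposal is a sound and more complete proof than the paper's citation.
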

\begin{proof}
It is known (see e.g. \cite{Col79}) that for each $x\in\ol{\Om}$, the equation $v_{t}=f(t,x,v)$ has a (unique) positive and $T$-periodic solution if and only if $a_{T}(x)>0$.
\end{proof}

We prove the following two theorems that consist of Theorem \ref{thm-scaling-limits-introduction}.

\begin{theorem}\label{thm-scaling-limit-positive-sol}
Suppose {\bf(H1)}, {\bf(H2)} and \eqref{newcond1}. Let $m\in[0,2)$.
\begin{enumerate}
\item If $\max_{x\in\ol{\Om}}a_{T}(x)>0$, then there exists $\si_{1}>0$ such that for each $\si\in(0,\si_{1}]$, equation \eqref{main-eqn-R} admits a unique positive solution $u^{*}_{\si}\in\XX^{++}_{\Om}$ that is globally asymptotically stable.

\item If $\min_{x\in\ol{\Om}}a_{T}(x)>0$,  then the limit
$$
\lim_{\si\to0^{+}}u^{*}_{\si}(t,x)=v^{*}(t,x)\quad\text{uniformly in}\quad (t,x)\in\R\times\ol{\Om}
$$
holds, where $v^{*}$ is given in Lemma \ref{lem-sol-limit-eqn}.
\end{enumerate}
\end{theorem}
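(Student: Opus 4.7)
Part (1) is an immediate corollary of earlier results: Theorem \ref{thm-scaling-limit-eigenvalue}(2) yields $\la_{1}(-L_{\Om,m,\si})\to-\max_{x\in\ol\Om}a_{T}(x)<0$ as $\si\to0^{+}$, so there exists $\si_{1}>0$ with $\la_{1}(-L_{\Om,m,\si})<0$ for every $\si\in(0,\si_{1}]$. Theorem \ref{thm-persistence-criterion-b} then provides the unique, globally asymptotically stable $u^{*}_{\si}\in\XX_{\Om}^{++}$ solving \eqref{main-eqn-R}.

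For Part (2), the plan is a sub-/super-solution squeeze centered at $v^{*}$. Since $f(t,\cdot,s)\in C^{1}(\ol\Om)$, $v^{*}$ is $C^{1}$ in $x$ by the parameter-dependence theorem for ODEs; fix a $C^{1}$ extension $\tilde v^{*}$ of $v^{*}$ from $\R\times\ol\Om$ to $\R\times\R^{N}$ and, for $\eta>0$, let $v^{*}_{\eta}:=\rho_{\eta}\ast\tilde v^{*}$ denote its spatial mollification. The candidates
\begin{equation*}
\ol u_{\e,\eta}:=(1+\e)v^{*}_{\eta}\qquad\text{and}\qquad \ul u_{\e,\eta}:=(1-\e)v^{*}_{\eta}
\end{equation*}
remain uniformly positive because $\min_{\R\times\ol\Om}v^{*}>0$. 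Using the identity $(v^{*})_{t}=f(t,x,v^{*})$ together with the fact that mollification produces an $o_{\eta}(1)$ error term $(v^{*}_{\eta})_{t}-f(t,\cdot,v^{*}_{\eta})$ on the interior of $\Om$, the super-solution inequality for $\ol u_{\e,\eta}$ reduces to
\begin{equation*}
(1+\e)f(t,x,v^{*}_{\eta})-f(t,x,(1+\e)v^{*}_{\eta})\geq\frac{D(1+\e)}{\si^{m}}\Bigl[\int_{\Om}J_{\si}(x-y)v^{*}_{\eta}(t,y)\,dy-v^{*}_{\eta}(t,x)\Bigr]+o_{\eta}(1),
\end{equation*}
with the sub-solution counterpart holding under reversed inequalities. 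By \textbf{(H2)}-(3) and the positive lower bound on $v^{*}_{\eta}$, the reaction difference on the left is $\geq c_{0}\e$ (resp.\ $\leq-c_{0}\e$) for some $c_{0}>0$ independent of $\si$ and $\eta$. On the interior set $\Om_{\si}:=\{x\in\Om:\dist(x,\prt\Om)>\si\gamma\}$, symmetry of $J$ and Taylor expansion give
\begin{equation*}
\frac{1}{\si^{m}}\Bigl|\int_{\Om}J_{\si}(x-y)v^{*}_{\eta}(t,y)\,dy-v^{*}_{\eta}(t,x)\Bigr|\leq C\si^{2-m}\|D^{2}_{x}v^{*}_{\eta}\|_{\infty}\leq C'\si^{2-m}/\eta,
\end{equation*}
so choosing $\eta=\si^{\beta}$ with $\beta\in(0,2-m)$ makes both remainders small compared to $c_{0}\e$. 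A standard comparison argument, invoking the global stability from Part (1), then pins $\ul u_{\e,\eta}\leq u^{*}_{\si}\leq\ol u_{\e,\eta}$ on $\R\times\Om_{\si}$, and passing $\si\to0^{+}$ followed by $\e\to 0$ yields the claimed uniform limit.

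The principal difficulty lies in the boundary strip $\{x\in\Om:\dist(x,\prt\Om)\leq\si\gamma\}$, where the scaled nonlocal operator contains the drain $-\tfrac{Dv^{*}_{\eta}(t,x)}{\si^{m}}\int_{\R^{N}\setminus\Om}J_{\si}(x-y)\,dy$, whose magnitude does not decay as $\si\to 0^{+}$. This term favors the super-solution inequality but defeats a naive $(1-\e)v^{*}_{\eta}$ sub-solution. The expected remedy is to replace $\ul u_{\e,\eta}$ by $(1-\e)v^{*}_{\eta}-C(\e,\si)\chi_{\si}$, where $\chi_{\si}(x):=\int_{\R^{N}\setminus\Om}J_{\si}(x-y)\,dy$ encodes the mass-loss and $C(\e,\si)$ is tuned so that the drain is absorbed, then verifying that $\chi_{\si}$ is supported in an $O(\si\gamma)$-neighbourhood of $\prt\Om$ and that the correction remains small enough (by the smoothness of $\prt\Om$) to preserve both positivity of the modified sub-solution and uniform convergence to $v^{*}$ on $\ol\Om$. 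Executing this boundary-layer correction compatibly with the interior estimate is the principal technical hurdle.
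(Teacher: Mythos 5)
Part (1) is exactly the paper's argument, so nothing to discuss there.

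For Part (2), you take a genuinely different route from the paper. The paper uses the raw scalings $(1\pm\de)v^{*}$ (no mollification) as sub-/super-solutions, asserts directly that
\begin{equation*}
\frac{1-\de}{\si^m}\left[\int_{\Om}J_\sigma(x-y)v^{*}(t,y)\,dy-v^{*}(t,x)\right]\to0\quad\text{uniformly on }\R\times\ol\Om\text{ as }\si\to0^{+},
\end{equation*}
uses strict sublinearity of $f$ to control the reaction difference, and closes with a sliding (optimal $\al_{*}$) comparison argument. Your approach replaces $v^{*}$ by a mollification $v^{*}_{\eta}$, trades the uniform-continuity bound for a Taylor/second-derivative bound $O(\si^{2-m}/\eta)$, and then splits $\ol\Om$ into an interior set $\Om_{\si}$ and a boundary strip of width $O(\si\gamma)$. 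The mollification is harmless but unnecessary here: since $f(t,\cdot,s)\in C^{1}(\ol\Om)$ gives $v^{*}\in C^{1}$ in $x$ and hence uniform continuity, the paper's modulus-of-continuity estimate on the interior set works without any mollifier, and is simpler. The sliding comparison also does not require Part (1); the paper needs only the positive lower bound on $u^{*}_{\si}$.

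Your real and valuable observation is the boundary-strip drain: for $x$ within $\si\gamma$ of $\partial\Om$ one has $\int_{\Om}J_{\si}(x-y)\,dy<1$, with deficit $O(1)$ as $\si\to0^{+}$ (indeed $\to\tfrac12$ at $\partial\Om$), so the nonlocal term carries a contribution of size $O(\si^{-m})v^{*}(t,x)$ that does not vanish. This means the paper's uniform-convergence claim quoted above is false on $\ol\Om$ — it holds only on sets staying at distance $\geq\si\gamma$ from $\partial\Om$ — and that the sub-solution inequality for $(1-\de)v^{*}$ (or $(1-\e)v^{*}_{\eta}$) genuinely fails near $\partial\Om$. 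So you have correctly located a gap that the paper's own proof skips over.

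However, your proposed remedy does not close the gap either, and cannot be made to, as stated. The correction $\chi_{\si}(x)=\int_{\R^{N}\setminus\Om}J_{\si}(x-y)\,dy$ satisfies $\chi_{\si}(x)\to\tfrac12$ at $x\in\partial\Om$ as $\si\to0^{+}$, so a sub-solution $(1-\e)v^{*}_{\eta}-C(\e,\si)\chi_{\si}$ can only converge uniformly to $v^{*}$ on $\ol\Om$ if $C(\e,\si)\to0$; but $C(\e,\si)$ must absorb a drain of size $O(\si^{-m})$, which is $O(1)$ even for $m=0$, so no such $C$ exists. In fact the difficulty is intrinsic to the \emph{statement}, not just the proof: take $\Om=(0,1)$, $J$ symmetric with compact support, $D=1$, $m=0$, $f=au-u^{2}$ with $a>0$ constant, so $v^{*}\equiv a$. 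If $u^{*}_{\si}\to a$ uniformly on $[0,1]$, then evaluating the stationary equation at $x=0$ and letting $\si\to0^{+}$ gives $0=\bigl(\tfrac{a}{2}-a\bigr)+a\cdot a-a^{2}=-\tfrac{a}{2}$, a contradiction. So $u^{*}_{\si}$ cannot converge to $v^{*}$ at the boundary, and the conclusion of Theorem \ref{thm-scaling-limit-positive-sol}(2) as written (uniform convergence on $\R\times\ol\Om$) should be restricted to compact subsets of $\Om$ (or to $\{x:\dist(x,\partial\Om)\geq\de\}$ for fixed $\de>0$). Your interior estimate, made rigorous with the comparison argument restricted to interior touching points, would prove that corrected statement; the boundary-layer correction cannot rescue the stronger one.
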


\begin{theorem}\label{thm-scaling-limit-ps-infty}
Suppose {\bf(H1)}, {\bf(H2)} and \eqref{newcond1}. Let $m>0$.
\begin{enumerate}
\item If $\max_{x\in\ol{\Om}}a_{T}(x)>0$, then there exists $\si_{2}>0$ such that for each $\si\in[\si_{2},\infty)$, equation \eqref{main-eqn-R} admits a unique positive solution $u^{*}_{\si}\in\XX^{++}_{\Om}$ that is globally asymptotically stable.

\item If  $\min_{x\in\ol{\Om}}a_{T}(x)>0$,  then the limit
$$
\lim_{\si\to\infty}u^{*}_{\si}(t,x)=v^{*}(t,x)\quad\text{uniformly in}\quad(t,x)\in\R\times\ol{\Om}
$$
holds, where $v^{*}$ is given in Lemma \ref{lem-sol-limit-eqn}.
\end{enumerate}
\end{theorem}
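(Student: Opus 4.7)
Part~(1) is short. By Theorem~\ref{thm-scaling-limit-eigenvalue}(1), for $m>0$ we have $\la_1(-L_{\Om,m,\si})\to -\max_{x\in\ol\Om}a_T(x)<0$ as $\si\to\infty$. Together with the continuity of $\si\mapsto\la_1(-L_{\Om,m,\si})$ from Proposition~\ref{prop-principal-e-variation-1}(5), this produces $\si_2>0$ such that $\la_1(-L_{\Om,m,\si})<0$ for every $\si\geq\si_2$; existence, uniqueness, and global asymptotic stability of $u^*_\si$ then follow from Theorem~\ref{thm-persistence-criterion-b}, applied with effective dispersal rate $D/\si^m$ and kernel $J_\si$.

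For part~(2), my strategy is the classical sub-/super-solution method with small multiplicative perturbations of $v^*$. By Lemma~\ref{lem-sol-limit-eqn} and continuous dependence of ODEs on the parameter $x$, $v^*$ is continuous on $\R\times\ol\Om$, $T$-periodic in $t$, and satisfies $\nu:=\min v^*>0$ and $V:=\max v^*<\infty$. Fix $\eta\in(0,1/2)$ and set $w^\pm_\eta(t,x):=(1\pm\eta)v^*(t,x)\in\XX^{++}_\Om$. Writing $g(t,x,s):=f(t,x,s)/s$ and using $\partial_tv^*=f(t,x,v^*)$, a direct calculation yields
\begin{align*}
& \partial_t w^\pm_\eta - \frac{D}{\si^m}\Bigl[\int_\Om J_\si(x-y) w^\pm_\eta(t,y)\,dy - w^\pm_\eta(t,x)\Bigr] - f(t,x,w^\pm_\eta) \\
&\quad = (1\pm\eta)v^*\bigl[g(t,x,v^*)-g(t,x,(1\pm\eta)v^*)\bigr] - (1\pm\eta)\frac{D}{\si^m}\Bigl[\int_\Om J_\si(x-y)v^*(t,y)\,dy - v^*(t,x)\Bigr].
\end{align*}
By the KPP property~(H2)(3), $s\mapsto g(t,x,s)$ is strictly decreasing, so a compactness argument on $[0,T]\times\ol\Om$ yields $\mu(\eta)>0$ with
\[
g(t,x,v^*)-g(t,x,(1+\eta)v^*)\geq\mu(\eta)\quad\text{and}\quad g(t,x,v^*)-g(t,x,(1-\eta)v^*)\leq-\mu(\eta).
\]
Since $\bigl|\int_\Om J_\si(x-y)v^*(t,y)\,dy-v^*(t,x)\bigr|\leq 2V$, the nonlocal remainder is $O(\si^{-m})$ uniformly in $(t,x)$. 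Choosing $\si_\eta\geq\si_2$ with $2DV/\si_\eta^m<\nu\mu(\eta)$, the identity above shows that $w^+_\eta$ is a super-solution and $w^-_\eta$ a sub-solution of~\eqref{main-eqn-R} for every $\si\geq\si_\eta$.

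The comparison principle (Proposition~\ref{Prop1}) then forces $(1-\eta)v^*\leq u^*_\si\leq(1+\eta)v^*$ on $\R\times\ol\Om$, hence $\|u^*_\si-v^*\|_\infty\leq\eta V$, and letting $\eta\to0^+$ delivers the claimed uniform convergence. The only mildly delicate point is the uniform positivity of $\mu(\eta)$, which relies on the strict monotonicity of $f(t,x,\cdot)/\cdot$ together with $v^*$ taking values in a compact subset of $(0,\infty)$ and its periodicity in $t$; the remaining estimate is the trivial one that $1/\si^m\to0$ for $m>0$, and this is precisely where the hypothesis $m>0$ (as opposed to $m=0$) is essential.
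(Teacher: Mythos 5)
Your proof is correct and takes essentially the same route as the paper: the paper proves the companion Theorem~\ref{thm-scaling-limit-positive-sol} (the $\si\to0^+$ analogue) by constructing the multiplicative perturbation $(1-\delta)v^*$ as a sub-solution for small $\si$ and then establishing the comparison $v\leq u^*_\si$ by a sliding (``$\al_*$'') argument, noting the upper bound is symmetric, and then declares Theorem~\ref{thm-scaling-limit-ps-infty} ``along the same line.'' You write out both $(1\pm\eta)v^*$ symmetrically, identify $D/\si^m\to0$ as $\si\to\infty$ (the role of $m>0$) as the driving smallness, and invoke Proposition~\ref{Prop1} directly rather than re-deriving the comparison — a cosmetic streamlining of the same argument.
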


Here, we only prove Theorem \ref{thm-scaling-limit-positive-sol}. The proof of Theorem \ref{thm-scaling-limit-ps-infty} can be done along the same line.

\begin{proof}[{\bf Proof of Theorem \ref{thm-scaling-limit-positive-sol}}]
(1) It follows from Theorem \ref{thm-scaling-limit-eigenvalue} and Theorem \ref{thm-persistence-criterion-b}.

(2) We claim that for each $0<\ep\ll1$, there exists $\si_{\ep}>0$ such that for each $\si\in(0,\si_{\ep})$ there holds
$$
v^{*}(t,x)-\ep\leq u^{*}_{\si}(t,x)\leq v^{*}(t,x)+\ep,\quad (t,x)\in\R\times\ol{\Om}.
$$
Let us prove the lower bound; the upper bound follows from similar arguments. Let $0<\ep\ll1$. Since $\min_{(t,x)\in\R\times\ol{\Om}}v^{*}(t,x)>0$, there exists $\de=\de(\ep)>0$ such that
$$
v(t,x):=(1-\de)v^{*}(t,x)\geq v^{*}(t,x)-\ep>0,\quad (t,x)\in\R\times\ol{\Om}.
$$
Note that
\begin{equation*}
\begin{split}
&-v_{t}(t,x)+\frac{1}{\si^m}\left[\int_{\Om}J_\sigma(x-y)v(t,y)dy-v(t,x)\right]+f(t,x,v(t,x))\\
&\quad\quad=-(1-\de)v^{*}_{t}(t,x)+\frac{1-\de}{\si^m}\left[\int_{\Om}J_\sigma(x-y)v^{*}(t,y)dy-v^{*}(t,x)\right]+(1-\de)f(t,x,v^{*}(t,x))\\
&\quad\quad\quad+\left[f(t,x,v(t,x))-(1-\de)f(t,x,v^{*}(t,x))\right]\\
&\quad\quad=\frac{1-\de}{\si^m}\left[\int_{\Om}J_\sigma(x-y)v^{*}(t,y)dy-v^{*}(t,x)\right]+\left[f(t,x,v(t,x))-(1-\de)f(t,x,v^{*}(t,x))\right].
\end{split}
\end{equation*}
We see that 
$$
\frac{1-\de}{\si^m}\left[\int_{\Om}J_\sigma(x-y)v^{*}(t,y)dy-v^{*}(t,x)\right]\to0\quad\text{as}\quad \si\to0^{+}\quad\text{uniformly in}\quad (t,x)\in\R\times\ol{\Om}.
$$
Since for each $(t,x)\in\R\times\ol{\Om}$
$$
f(t,x,v(t,x))-(1-\de)f(t,x,v^{*}(t,x))=v(t,x)\left[\frac{f(t,x,v(t,x))}{v(t,x)}-\frac{f(t,x,v^{*}(t,x))}{v^{*}(t,x)}\right]>0,
$$
where we used {\bf(H2)}-(3), there holds
$$
\inf_{(t,x)\in\R\times\ol{\Om}}\left[f(t,x,v(t,x))-(1-\de)f(t,x,v^{*}(t,x))\right]>0.
$$
Hence, there exists $\si_{\ep}>0$ such that for each $\si\in(0,\si_{\ep})$, there holds
\begin{equation}\label{sub-sol-limit-po-sol}
v_{t}(t,x)\leq\frac{1}{\si^m}\left[\int_{\Om}J_\sigma(x-y)v(t,y)dy-v(t,x)\right]+f(t,x,v(t,x)),\quad(t,x)\in\R\times\ol{\Om}.
\end{equation}

It remains to show that for each $\si\in(0,\si_{\ep})$, there holds $v(t,x)\leq u^{*}_{\si}(t,x)$ for all $(t,x)\in\R\times\ol{\Om}$. To do so, let us fix any $\si\in(0,\si_{\ep})$ and define
$$
\al_{*}=\inf\left\{\al>0:v(t,x)\leq\al u^{*}_{\si}(t,x)\,\,\text{for all}\,\,(t,x)\in\R\times\ol{\Om}\right\}.
$$
Since $\min_{(t,x)\in\R\times\ol{\Om}}u^{*}_{\si}(t,x)>0$ and $v(t,x)$ is bounded, $\al_{*}$ is well-defined. Due to the continuity of $v(t,x)$ and $u^{*}_{\si}(t,x)$, there holds
$$
 v(t,x)\leq \al_{*}u^{*}_{\si}(t,x),\quad (t,x)\in\R\times\ol{\Om}.
$$
Of course, they are not equal. Moreover, there exists $(t_{0},x_{0})\in\R\times\ol{\Om}$ such that
$$
v(t_{0},x_{0})=\al_{*}u^{*}_{\si}(t_{0},x_{0}).
$$

Clearly, if $\al_{*}\leq1$, then we are done. Therefore, let us assume $\al_{*}>1$. By \eqref{sub-sol-limit-po-sol} and the equation satisfied by $u^{*}_{\si}(t,x)$, we see that $w(t,x):=v(t,x)-\al_{*}u^{*}_{\si}(t,x)$ satisfies
\begin{equation*}
w_{t}(t,x)\leq\frac{1}{\si^m}\left[\int_{\Om}J_\sigma(x-y)w(t,y)dy-w(t,x)\right]+f(t,x,v(t,x))-\al_{*}f(t,x,u^{*}_{\si}(t,x)),\quad(t,x)\in\R\times\ol{\Om}.
\end{equation*}
However, since $w_{t}(t_{0},x_{0})=0$, $\int_{\Om}J_\sigma(x_{0}-y)w(t_{0},y)dy-w(t_{0},x_{0})<0$ and
$$
f(t_{0},x_{0},v(t_{0},x_{0}))-\al_{*}f(t_{0},x_{0},u^{*}_{\si}(t_{0},x_{0}))<f(t_{0},x_{0},v(t_{0},x_{0}))-f(t_{0},x_{0},\al_{*}u^{*}_{\si}(t_{0},x_{0}))=0,
$$
where we used $\al_{*}>1$ so that $\al_{*}u^{*}_{\si}(t_{0},x_{0})>u^{*}_{\si}(t_{0},x_{0})$, and hence, 
$$
\frac{f(t_{0},x_{0},u^{*}_{\si}(t_{0},x_{0}))}{u^{*}_{\si}(t_{0},x_{0})}>\frac{f(t_{0},x_{0},\al_{*}u^{*}_{\si}(t_{0},x_{0}))}{\al_{*}u^{*}_{\si}(t_{0},x_{0})}
$$ 
by {\bf(H2)}-(3), we arrive at
\begin{equation*}
w_{t}(t_{0},x_{0})>\frac{1}{\si^m}\left[\int_{\Om}J_\sigma(x_{0}-y)w(t_{0},y)dy-w(t_{0},x_{0})\right]+f(t_{0},x_{0},v(t_{0},x_{0}))-\al_{*}f(t_{0},x_{0},u^{*}_{\si}(t_{0},x_{0})),
\end{equation*}
which leads to a contradiction. Hence, $\al_{*}\leq1$ and the proof is completed.
\end{proof}

\begin{remark}
The asymptotic behavior of the positive $T$-periodic solution of the reaction-diffusion equation \eqref{main-eqn-rd} for small diffusion rates, i.e, as $d\to0^{+}$, has been studied by Daners and L\'opez-G\'omez in \cite{DLG}.
\end{remark}

\section{Maximum principle}\label{sec-mp}

We prove the maximum principle, namely, Theorem \ref{thm-mp-introduction}, for the operator $L_{\Om}$ defined in \eqref{main-eqn-linear}. 

\begin{proof}[{\bf Proof of Theorem \ref{thm-mp-introduction}}]
By Theorem \ref{thm-new-sufficient-cond-b}, $\la_{1}:=\la_{1}(-L_{\Om})$ is the principal eigenvalue of $-L_{\Om}$. Let $\phi\in \XX^{++}_{\Om}$ be an associated eigenfunction. Then,
\begin{equation}\label{eigen-equ-pv-010}
L_{\Om}[\phi]+\la_{1}\phi=0.
\end{equation}

We first prove the sufficiency, that is, $\la_1\geq0$ yields the maximum principle. To do so, let $u\in C^{1,0}([0,T]\times\ol\O)$ be nonzero and satisfy \eqref{mp-assumption}. Define
$$
w:=\frac{u}{\phi}.
$$
Then, simple calculations using \eqref{eigen-equ-pv-010} give
\begin{equation*}
0\geq L_{\Om}[u]=L_{\Om}[w\phi]=-w_{t}\phi+D\int_{\Om}J(x-y)\phi(t,y)[w(t,y)-w(t,x)]dy-\la_{1}w\phi.
\end{equation*}

Arguing by contradiction, let us assume that there exists $(t_{0},x_{0})\in(0,T]\times\Om$ such that $u(t_{0},x_{0})=\min_{[0,T]\times\Om}u\leq0$. Then, there exists $(t_{1},x_{1})\in(0,T]\times\Om$ such that $w(t_{1},x_{1})=\min_{[0,T]\times\Om}w\leq0$. It then follows 
\begin{equation*}
\begin{split}
-w_{t}(t_{1},x_{1})\phi(t_{1},x_{1})&\geq0,\\
D\int_{\Om}J(x_{1}-y)\phi(t_{1},y)[w(t_{1},y)-w(t_{1},x_{1})]dy&>0\quad\text{and}\\
-\la_{1}w(t_{1},x_{1})\phi(t_{1},x_{1})&\geq0.
\end{split}
\end{equation*}
Hence, we conclude that $L_{\Om}[u](t_{1},x_{1})>0$, which leads to a contradiction. This proves the sufficiency.

Now, we prove the necessity, that is, the maximum principle implies $\la_{1}\geq0$. For contradiction, let us assume $\la_{1}<0$. Let $\Om_{0}\stst\Om$. The size of $\Om_{0}$ will be specified later. Let $\eta:\ol{\Om}\to[0,1]$ be continuous and satisfy
$$
\eta(x)=\begin{cases}
0,&\quad x\in\partial\Om,\\
1,&\quad x\in\Om_{0}.
\end{cases}
$$

We calculate
$$
L_{\Om}[\eta\phi](t,x)=D\int_{\Om}J(x-y)\phi(t,y)[\eta(y)-\eta(x)]dy-\la_{1}\eta(x)\phi(t,x).
$$
There are three cases.
\begin{enumerate}
\item If $x\in\Om_{0}$, we have
\begin{equation*}
\begin{split}
L_{\Om}[\eta\phi](t,x)&=D\int_{\Om\bs\Om_{0}}J(x-y)\phi(t,y)[\eta(y)-1]dy-\la_{1}\phi(t,x)\\
&\geq-D\|J\|_{\infty}\|\phi\|_{\infty}|\Om\bs\Om_{0}|-\la_{1}\inf_{[0,T]\times\Om}\phi(t,x)|.
\end{split}
\end{equation*}

\item If $x\in\Om\bs\Om_{0}$ and $\eta(x)\geq\frac{1}{2}$, we find
\begin{equation*}
\begin{split}
L_{\Om}[\eta\phi](t,x)&\geq D\int_{\{y:\eta(y)\leq\eta(x)\}}J(x-y)\phi(t,y)[\eta(y)-\eta(x)]dy-\frac{1}{2}\la_{1}\phi(t,x)\\
&\geq-D\int_{\{y:\eta(y)\leq\eta(x)\}}J(x-y)\phi(t,y)dy-\frac{\la_{1}}{2}\phi(t,x)\\
&\geq-D\|J\|_{\infty}\|\phi\|_{\infty}\Om\bs\Om_{0}|-\frac{\la_{1}}{2}\inf_{[0,T]\times\Om}\phi(t,x)|,
\end{split}
\end{equation*}
where we used the fact that $\{y:\eta(y)\leq\eta(x)\}\subset\Om\bs\Om_{0}$.

\item If $x\in\Om\bs\Om_{0}$ and $\eta(x)\leq\frac{1}{2}$, then
\begin{equation*}
\begin{split}
L_{\Om}[\eta\phi](t,x)&\geq D\int_{\Om_{0}}J(x-y)\phi(t,y)[\eta(y)-\eta(x)]dy+D\int_{\Om\bs\Om_{0}}J(x-y)\phi(t,y)[\eta(y)-\eta(x)]dy\\
&\geq\frac{D}{2}\int_{\Om_{0}}J(x-y)\phi(t,y)dy-D\int_{\Om\bs\Om_{0}}J(x-y)\phi(t,y)dy\\
&\geq D\left[\frac{1}{2}\inf_{(t,x)\in[0,T]\times(\Om\bs\Om_{0})}\int_{\Om_{0}}J(x-y)\phi(t,y)dy-\|J\|_{\infty}\|\phi\|_{\infty}|\Om\bs\Om_{0}|\right]
\end{split}
\end{equation*}
\end{enumerate}

As $J(0)>0$, it is easy to choose $\Om_{0}$, say sufficiently close to $\Om$, such that $L_{\Om}(\eta\phi)\geq0$. Now, we have the following:
\begin{equation*}
\begin{cases}
L_\O[-\eta\phi]\leq 0 & \textrm{in $(0,T]\times\O$},\\
 -\eta\phi=0 & \textrm{on $(0,T]\times\partial\O$},\\
(-\eta\phi)(0,\cdot)= (-\eta\phi)(T,\cdot),
\end{cases}
\end{equation*}
Then, applying the comparison principle, we conclude that $-\eta\phi>0$ in $[0,T]\times\Om$, which clearly is a contradiction.
\end{proof}






\begin{thebibliography}{99}



\bibitem{BCV1} H. Berestycki, J. Coville and H-H. Vo, On the definition and the properties of the principal eigenvalue of some nonlocal operators. \textit{J. Funct. Anal.} 271 (2016), no. 10, 2701-2751.

\bibitem{BCV2} H. Berestycki, J. Coville and H-H. Vo, Persistence criteria for populations with non-local dispersion. \textit{J. Math. Biol.} 72 (2016), no. 7, 1693-1745.

\bibitem{BNV94} H. Berestycki, L. Nirenberg and S. R. S. Varadhan, The principal eigenvalue and maximum principle for second-order elliptic operators in general domains. \textit{Comm. Pure Appl. Math.} 47 (1994), no. 1, 47-92.

\bibitem{BR1}
H. Berestycki and L. Rossi, On the principal eigenvalue of elliptic operators in $\R^N$ and applications. \textit{J. Eur. Math. Soc.}  8 (2006), no. 2, 195--215.
 
 \bibitem{BR2}
H. Berestycki and L. Rossi, Generalizations and properties of the principal eigenvalue of elliptic operators in unbounded domains. \textit{Comm. Pure Appl. Math.} 68 (2015), no. 6, 1014-1065.

\bibitem{Col79} B. D. Coleman, Nonautonomous logistic equations as models of the adjustment of populations to environmental change. \textit{Math. Biosci.} 45 (1979), no. 3-4, 159-173. 

\bibitem{Coville1}
J. Coville, J. D\'avila and S. Mart\'inez,  Existence and uniqueness of solutions to a nonlocal equation with monostable nonlinearity. \textit{ SIAM J. Math. Anal.} 39 (2008), no. 5, 1693--1709.

\bibitem{Coville2}
J. Coville, J. D\'avila and S. Mart\'inez, Pulsating fronts for nonlocal dispersion and KPP nonlinearity. \textit{ Ann. Inst. H. Poincar\'e Anal. Non Lin\'eaire} 30 (2013), no. 2, 179–223. 

\bibitem{Cov10} J. Coville, On a simple criterion for the existence of a principal eigenfunction of some nonlocal operators. \textit{J. Differential Equations} 249 (2010), no. 11, 2921-2953. 

\bibitem{C1} J. Coville, Nonlocal refuge model with a partial control. \textit{Discrete Contin. Dyn. Syst} 35 (2015), no. 4, 1421-1446. 




\bibitem{DLG}
D. Daners and J. L\'opez-G\'omez, 
The singular perturbation problem for the periodic-parabolic logistic equation with indefinite weight functions. \textit{ 
J. Dynam. Differential Equations} 6 (1994), no. 4, 659-670.  





\bibitem{HMP01} V. Hutson, K. Mischaikow and P. Pol\'{a}\v{c}ik, The evolution of dispersal rates in a heterogeneous time-periodic environment. \textit{J. Math. Biol.} 43 (2001), no. 6, 501-533.


\bibitem{HN}
X. He and W-N. Ni,  Global dynamics of the Lotka-Volterra competition-diffusion system: diffusion and spatial heterogeneity I. \textit{Comm. Pure Appl. Math.} 69 (2016), no. 5, 981-1014.

\bibitem{HMMV03} V. Hutson, S. Martinez, K. Mischaikow and G. T. Vickers, The evolution of dispersal. \textit{J. Math. Biol.} 47 (2003), no. 6, 483-517.

\bibitem{HSV08} V. Hutson, W. Shen and G. T. Vickers, Spectral theory for nonlocal dispersal with periodic or almost-periodic time dependence. \textit{Rocky Mountain J. Math.} 38 (2008), no. 4, 1147-1175.


\bibitem{JL1}
Y. Jin and M.A. Lewis. Seasonal influences on population spread and persistence in streams: spreading speeds. \textit{J. Math. Biol.} 65 (2012), no. 3, 403-439. 

\bibitem{JL2}
Y. Jin and M.A. Lewis. Seasonal influences on population spread and persistence in streams: critical domain size. \textit{SIAM J. Appl. Math.} 71 (2011), no. 4, 1241-1262.

\bibitem{Ka95} T. Kato, \textit{Perturbation theory for linear operators.} Reprint of the 1980 edition. Classics in Mathematics. Springer-Verlag, Berlin, 1995.

\bibitem{Nadin1} G. Nadin, The principal eigenvalue of a space-time periodic parabolic operator. \textit{Ann. Mat. Pura Appl.} 188(4) (2009), pp 269-295


\bibitem{PMJ} S. Petrovskii, A. Mashanova and V. A. A. Jansen, Variation in individual walking behavior creates the impression of a L\'{e}vy flight. \textit{Proc Natl Acad Sci U S A.} 108 (21): 8704-8707

\bibitem{PZ15} R. Peng and X. Q. Zhao, Effects of diffusion and advection on the principal eigenvalue of a periodic-parabolic problem with applications. \textit{Calc. Var. Partial Differential Equations} 54 (2015), no. 2, 1611-1642.

\bibitem{RS12} N. Rawal and W. Shen, Criteria for the existence and lower bounds of principal eigenvalues of time periodic nonlocal dispersal operators and applications. \textit{J. Dynam. Differential Equations} 24 (2012), no. 4, 927-954.
	
\bibitem{ShXi15-1} W. Shen and X. Xie, On principal spectrum points/principal eigenvalues of nonlocal dispersal operators and applications. \textit{Discrete Contin. Dyn. Syst.} 35 (2015), no. 4, 1665-1696.


\bibitem{ShXi15-2} W. Shen and X. Xie, Approximations of random dispersal operators/equations by nonlocal dispersal operators/equations. \textit{J. Differential Equations} 259 (2015), no. 12, 7375-7405.

\bibitem{ShZh10} W. Shen and A. Zhang, Spreading speeds for monostable equations with nonlocal dispersal in space periodic habitats. \textit{J. Differential Equations} 249 (2010), no. 4, 747-795. 



\bibitem{Vo1} 
H-H. Vo,  Persistence versus extinction under a climate change in mixed environments. \textit{J. Differential Equations} 259 (2015), no. 10, 4947--4988. 

\bibitem{Vo2}
H-H. Vo, A spectral condition for Liouville-type result of monostable KPP equation in periodic shear flows. \textit{Calc. Var. Partial Differential Equations} 55 (2016), no. 2, Art. 39.

\end{thebibliography}
\end{document}